\documentclass[english]{amsart}

\usepackage{amsmath}
\usepackage{amssymb}
\usepackage{amsfonts}
\usepackage{amsthm}
\usepackage{mathrsfs}
\usepackage[all]{xy}
\usepackage[pdftex]{graphicx}
\usepackage{rotating}
\usepackage{color}
\usepackage{enumerate}
\usepackage{hyperref}

\newcommand{\gal}{\operatorname{Gal}}
\newcommand{\Hom}{\operatorname{Hom}}

\newcommand{\End}{\operatorname{End}}
\newcommand{\Frob}{\operatorname{Frob}}

\newcommand{\Ext}{\operatorname{Ext}}

\newcommand{\id}{\operatorname{id}}

\newcommand{\N}{\operatorname{N}}

\newcommand{\res}{\operatorname{Res}}

\newcommand{\isom}{\stackrel{\sim}{\rightarrow}}

\newcommand{\im}{\operatorname{im}}

\newcommand{\JH}{\operatorname{JH}}

\newcommand{\Fil}{\operatorname{Fil}}

\newcommand{\gr}{\operatorname{gr}}
\newcommand{\Rep}{\operatorname{Rep}}

\newcommand{\SL}{\mathrm{SL}}

\newcommand{\GL}{\mathrm{GL}}
\newcommand{\R}{\mathrm{R}}

\newcommand{\rad}{\mathrm{rad}}

\newcommand{\surj}{\twoheadrightarrow}
\newcommand{\inj}{\hookrightarrow}

\newcommand{\comment}[1]{}

\newcommand{\teich}[1]{\widetilde{#1}}
\newcommand{\tld}[1]{\widetilde{#1}}
\newcommand{\bA}{\mathbf{A}}
\newcommand{\bT}{\mathbf{T}}
\newcommand{\m}{\mathfrak{m}}

\newcommand{\Art}{\mathrm{Art}}

\newcommand{\un}[1]{\underline{#1}}
\newcommand{\into}{\hookrightarrow}
\newcommand{\La}{\Lambda}

\newcommand{\der}{\mathrm{der}}
\newcommand{\ra}{\rightarrow}

\newtheorem{thm}{Theorem}[section]
\newtheorem{lem}[thm]{Lemma}

\newtheorem{prop}[thm]{Proposition}
\newtheorem{corr}[thm]{Corollary}

\theoremstyle{remark}
\newtheorem{remark}[thm]{Remark}
\theoremstyle{definition}
\newtheorem{defin}[thm]{Definition}

\numberwithin{equation}{section}

\def\N{\mathbf N}
\def\Z{\mathbf Z}
\def\Q{\mathbf Q}

\def\F{\mathbf{F}}
\def\R{\mathbf{R}}

\def\G{\mathbf{G}}

\def\rarrow{\rightarrow}

\def\onto{\twoheadrightarrow}

\def\cO{\mathcal{O}}

\def\rbar{\overline{r}}
\def\rhobar{\overline{\rho}}
\def\psibar{\overline{\psi}}

\def\Fbar{\overline{\mathbf{F}}}

\DeclareMathAlphabet{\mathpzc}{OT1}{pzc}{m}{it}

\title{Multiplicity one at full congruence level}

\author{Daniel Le}
\address{University of Toronto,
40 St. George Street, 
Toronto, ON M5S 2E4}
\email{daniel.le@math.toronto.edu}

\author{Stefano Morra}
\address{Universit\'e Paris 8, Laboratoire d'Analyse, G\'eom\'etrie et Applications,  LAGA, Universit\'e Sorbonne Paris Nord, CNRS, UMR 7539,  F-93430, Villetaneuse, France}
\email{morra@math.univ-paris13.fr}

\author{Benjamin Schraen}
\address{Universit\'e Paris-Saclay, CNRS, Laboratoire de math\'ematiques d’Orsay, 91405, Orsay, France}
\email{benjamin.schraen@math.u-psud.fr}

\begin{document}

\begin{abstract}
Let $F$ be a totally real field in which $p$ is unramified.
Let $\rbar: G_F \rarrow \GL_2(\Fbar_p)$ be a modular Galois representation which satisfies the Taylor--Wiles hypotheses and is tamely ramified and generic at a place $v$ above $p$.
Let $\mathfrak{m}$ be the corresponding Hecke eigensystem.
We describe the $\mathfrak{m}$-torsion in the mod $p$ cohomology of Shimura curves with full congruence level at $v$ as a $\GL_2(k_v)$-representation.
In particular, it only depends on $\rbar|_{I_{F_v}}$ and its Jordan--H\"{o}lder factors appear with multiplicity one.
The main ingredients are a description of the submodule structure for generic $\GL_2(\F_q)$-projective envelopes and the multiplicity one results of \cite{EGS}.
\end{abstract}

\maketitle

\section{Introduction}

Fix a prime $p$ and a totally real field $F/\Q$.
Fix a modular Galois representation $\rbar:G_F \rarrow \GL_2(\Fbar_p)$ with corresponding Hecke eigensystem $\mathfrak{m}$.
Fix a place $v|p$ of $F$.
Mod $p$ local-global compatibility predicts that the $\mathfrak{m}$-torsion subspace, which we denote by $\pi$, in the mod $p$ cohomology of a Shimura curve with infinite level at $v$ realizes the mod $p$ Langlands correspondence for $\GL_2(F_v)$ (see \cite{BreuilICM}), generalizing the case of modular curves (\cite{Colmez,Emerton,Paskunas}).
The goal of the mod $p$ local Langlands program is then to describe $\pi$ in terms of the restriction to the decomposition group at $v$, $\rbar|_{G_v}$, though it is not even known whether $\pi$ depends only on $\rbar|_{G_v}$.
One of the major difficulties is that little is known about supersingular representations outside of the case of $\GL_2(\Q_p)$ (see \cite{AHHV}).

We now assume that $p$ is unramified in $F$ and that $\rbar|_{G_v}$ is $1$-generic (see Definition \ref{defin:generic}).
Let $K = \GL_2(\cO_v)$ and $I_1 \subset K$ be the usual pro-$p$ Iwahori subgroup.
\cite{BDJ} and \cite[Conjecture B.1]{Breuil} conjecturally describe the $K$-socle and $I_1$-invariants of $\pi$---in particular they should satisfy mod $p$ multiplicity one when the tame level is minimal (see \S\ref{sec:global}).
\cite{Gee} and \cite{EGS} later confirmed these conjectures.
\cite{Breuil} shows that such a $\pi$ (also satisfying other properties known for $\mathfrak{m}$-torsion in completed cohomology) must contain a member of a family of representations constructed in \cite{BP}.
If $f=1$, this family has one element, and produces the (one-to-one) mod $p$ Langlands correspondence for $\GL_2(\Q_p)$.
For $f>1$, each family is infinite (see \cite{Hu}), and so a na\"{i}ve one-to-one correspondence cannot exist.
Moreover, the $K$-socle and the $I_1$-invariants are not sufficient to specify a single mod $p$ $\GL_2(\Q_{p^f})$-representation when $f>1$.

However, \cite{EGS} proves a stronger multiplicity one result than what is used in the construction of \cite{BP}, namely a result for any lattice in a tame type with irreducible cosocle.
We strengthen this result in tame situations as follows (cf. Corollary \ref{corr:main}).
Let $K(1) \subset K$ be the kernel of the natural map $K \surj \GL_2(k_v)$. Assume that in the definition of $\pi$ we consider the cohomology of a Shimura curve with infinite level at $v$ and minimal tame level (see \S\ref{sec:global} for a precise statement).

\begin{thm}\label{thm:intro}
Suppose that $\rbar$ is $1$-generic and tamely ramified at $v$ and satisfies the Taylor--Wiles hypotheses.
Then the $\GL_2(k_v)$-representation $\pi^{K(1)}$ is isomorphic to the representation $D_0(\rbar|_{G_v})$ \emph{(}which depends only on $\rbar|_{I_v}$\emph{)} constructed in \cite{BP}.
In particular, its Jordan--H\"{o}lder constituents appear with multiplicity one.
\end{thm}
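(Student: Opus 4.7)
The plan is to show that $\pi^{K(1)}$ and $D_0(\rbar|_{G_v})$ coincide as subrepresentations of a common $\GL_2(k_v)$-injective envelope of their socle. I would proceed in three stages.

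First, I would match the socles. By the resolution of the Buzzard--Diamond--Jarvis conjecture (\cite{Gee}, \cite{EGS}) together with the minimal tame level assumption, the $\GL_2(k_v)$-socle of $\pi^{K(1)}$ is $\bigoplus_{\sigma \in W(\rbar|_{G_v})} \sigma$ with each Serre weight occurring exactly once; by construction, the same holds for $D_0(\rbar|_{G_v})$. Fix a common injective envelope $J = \bigoplus_{\sigma \in W(\rbar|_{G_v})} \mathrm{inj}\,\sigma$, so that both $\pi^{K(1)}$ and $D_0(\rbar|_{G_v})$ embed into $J$.

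Second, I would apply the paper's structure theorem for submodule lattices of generic $\GL_2(\F_q)$-projective envelopes (equivalently, their $\GL_2(k_v)$-duals) to each summand $\mathrm{inj}\,\sigma$. Thanks to $1$-genericity, this result classifies subrepresentations of $\mathrm{inj}\,\sigma$ by combinatorial data attached to their Jordan--H\"older support; in particular it should single out $D_0(\rbar|_{G_v})$ as the unique subrepresentation of $J$ extending the common socle whose Jordan--H\"older constituents all lie in $W(\rbar|_{G_v})$ and whose internal extensions realize the Breuil--Paskunas recipe of \cite{BP}.

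Third, I would verify that $\pi^{K(1)} \subset J$ is exactly such a subrepresentation, by a two-sided squeeze. For the upper bound, every Jordan--H\"older constituent $\sigma'$ of $\pi^{K(1)}$ must lie in $W(\rbar|_{G_v})$: an extra constituent would, via the submodule structure theorem, force the existence of a non-split extension of Serre weights inside $\pi^{K(1)}$ that lifts to a $K$-lattice $L$ in a tame type with irreducible cosocle outside $W(\rbar|_{G_v})$, contradicting the mod $p$ multiplicity one of \cite{EGS}. For the lower bound, for each $\sigma \in W(\rbar|_{G_v})$ I would choose a tame type $\tau_\sigma$ and a $K$-stable lattice $L_\sigma \subset \tau_\sigma$ whose mod $p$ reduction realizes the summand $D_{0,\sigma}(\rbar|_{G_v})$; then \cite{EGS} gives $\dim \Hom_K(L_\sigma, \pi) = 1$, producing a map $\overline{L}_\sigma \to \pi^{K(1)}$ nonzero on the $\sigma$-isotypic socle, hence injective, and these assemble into the desired embedding $D_0(\rbar|_{G_v}) \hookrightarrow \pi^{K(1)}$.

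The principal obstacle, I expect, will be the tight interplay between the local structural classification and the global multiplicity one input in the third step: the structure theorem provides a lattice of candidate subrepresentations of $J$ sharing the correct socle, but one must use \cite{EGS} both to exclude every candidate strictly larger than $D_0(\rbar|_{G_v})$ and to exhibit each Breuil--Paskunas summand $D_{0,\sigma}(\rbar|_{G_v})$ explicitly inside $\pi^{K(1)}$ by tracking the reduction mod $p$ of carefully chosen lattices in tame types and checking that the individual embeddings glue along the common socle.
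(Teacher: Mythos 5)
Your overall squeeze (embed both $\pi^{K(1)}$ and $D_0(\rbar|_{G_v})$ into a common injective envelope, bound one by the other in each direction using the structure theory of generic projective envelopes plus the multiplicity one results of \cite{EGS}) is in the right spirit, but two of your key steps are not correct as stated. First, the ``upper bound'' claim that every Jordan--H\"older constituent of $\pi^{K(1)}$ lies in the set of Serre weights of $\rbar|_{G_v}$ is false, and in fact contradicts the theorem you are proving and your own lower bound: $D_0(\rbar|_{G_v})$ (hence $\pi^{K(1)}$, once $D_0\hookrightarrow \pi^{K(1)}$ is known) contains many constituents outside $W^?(\rhobar^\vee(1))$. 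For the same reason, your step-two characterization of $D_0$ as ``the unique subrepresentation of $J$ whose constituents all lie in $W^?$'' is not a characterization of $D_0$ at all. What the global input can give, and what is actually needed, is the statement that modular weights occur in $\pi^{K(1)}$ only in the socle, each exactly once; this is proved not by producing ``a lattice with irreducible cosocle outside $W^?$'' (the multiplicity one theorem of \cite{EGS} says nothing of that kind) but by embedding a length-two subquotient with modular socle into the reduction $\overline{\sigma}^0(\tau)$ of a lattice with irreducible socle (via \cite[Theorem 5.1.1]{EGS}) and contradicting the one-dimensionality of $\Hom$ coming from the patching functor, as in Lemma \ref{lem:multone}. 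Moreover, even with this corrected statement one does not yet get $\pi^{K(1)}\subseteq D_0$: one must also know that $D_0$ is the \emph{maximal} representation with socle $\oplus_\sigma\sigma$ in which each modular weight occurs exactly once, which is precisely the content of Propositions \ref{prop:multone} and \ref{prop:upperbound} and rests on the submodule structure Theorem \ref{thm:structure} (the modules $V_J$ and Lemma \ref{lem:span}), not merely on its statement.

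Second, your lower bound hinges on the assertion that each Breuil--Pa\v{s}k\={u}nas summand $D_{0,\sigma}(\rbar|_{G_v})$ is the mod $p$ reduction of a lattice in a tame type. This is a substantive structural claim which you do not justify and which is nowhere established in this paper; verifying it would require comparing the explicit lattice reductions of \cite[\S 5]{EGS} with the Breuil--Pa\v{s}k\={u}nas construction (this is in fact closer to the independent route of Hu--Wang than to the argument here). The paper avoids the issue entirely by importing the embedding $D_0(\rbar|_{G_v})\hookrightarrow\pi^{K(1)}$ from \cite[Proposition 9.3]{Breuil} (Proposition \ref{prop:breuil}), and then concludes by composing the two surjections on Pontrjagin duals and using finite length. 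So while your architecture parallels the paper's, as written the upper bound is based on a false statement and the lower bound on an unproved one; both gaps are essential and need to be filled along the lines just indicated.
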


If the Jordan--H\"{o}lder constituents of a $\GL_2(k_v)$-representation appear with multiplicity one, we say that the representation is {\it multiplicity free}.

\begin{corr}
For $p>3$, there exists a supersingular $\GL_2(F_v)$-representation $\pi$ such that $\pi^{K(1)}$ is multiplicity free.
\end{corr}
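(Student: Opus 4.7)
The plan is to apply Theorem \ref{thm:intro} to a modular $\rbar$ whose local restriction at $v$ is, in addition, \emph{irreducible}, and then read off the multiplicity one statement from the explicit description of $D_0$ for tame irreducible parameters given in \cite{BP}.

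Concretely, I would first exhibit a totally real field $F$ in which $p$ is unramified, a place $v\mid p$, and a modular Galois representation $\rbar:G_F\to\GL_2(\Fbar_p)$ satisfying the Taylor--Wiles hypotheses, such that $\rbar|_{G_v}$ is tamely ramified, $1$-generic, and irreducible. For $p>3$ such an $\rbar$ is easy to produce: one may take $F$ real quadratic with $p$ inert and $\rbar$ coming from an elliptic curve $E/F$ with good supersingular reduction at $v$ and sufficiently large residual image; alternatively one can invoke known modularity and globalization results for $\GL_2$ over totally real fields to produce a modular $\rbar$ with prescribed local inertial type at $v$. The assumption $p>3$ is used both to guarantee the existence of $1$-generic tame irreducible representations of $G_v$ and to ensure that one can choose the global image of $\rbar$ to be adequate in the Taylor--Wiles sense.

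Given such $\rbar$, let $\pi$ be the associated $\mathfrak{m}$-torsion space in the mod $p$ cohomology of the relevant Shimura curve, as in \S\ref{sec:global}. Theorem \ref{thm:intro} yields $\pi^{K(1)}\cong D_0(\rbar|_{G_v})$, and inspection of the construction in \cite{BP} shows that for tame irreducible $\rbar|_{G_v}$ the $\GL_2(k_v)$-representation $D_0(\rbar|_{G_v})$ is multiplicity free: its socle is the direct sum of the pairwise distinct Serre weights predicted by \cite{BDJ}, and the non-split extensions glued on top produce no repeated Jordan--H\"older constituent. Supersingularity of $\pi$ itself is then automatic from the irreducibility of $\rbar|_{G_v}$: any non-supersingular irreducible subquotient would carry a non-zero Jacquet module, producing a Hecke eigensystem incompatible with $\rbar|_{G_v}$ being irreducible, by mod $p$ local-global compatibility.

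The hard part will be the first step: simultaneously arranging the Taylor--Wiles hypothesis on the global image of $\rbar$ and the $1$-genericity plus irreducibility of the tame $\rbar|_{G_v}$. The restriction $p>3$ is precisely what makes this juggling possible, both by providing enough room in inertia for tame $1$-generic irreducible types and by ensuring that the global image of $\rbar$ can be chosen adequate. Once such a single $\rbar$ is exhibited the rest of the argument is a direct consequence of Theorem \ref{thm:intro} combined with the construction of \cite{BP}.
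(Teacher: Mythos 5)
Your overall strategy (globalize a tame, $1$-generic, irreducible local representation, then feed it into Theorem \ref{thm:intro}) is the same as the paper's, and the multiplicity-free part of your argument is fine since Theorem \ref{thm:intro} already asserts it. The genuine gap is your final step, where you claim that supersingularity of $\pi$ itself ``is automatic'' because any non-supersingular irreducible subquotient would have a nonzero Jacquet module contradicting mod $p$ local-global compatibility. This does not work: $\pi$, the $\mathfrak{m}$-torsion in cohomology at infinite level, is not known to be irreducible, so ``$\pi$ is supersingular'' does not even parse without further argument, and the local-global compatibility statements available (e.g.\ via ordinary parts) constrain the cohomology as a whole, not arbitrary irreducible subquotients; it is simply not known for $f>1$ that every Jordan--H\"older factor of $\pi$ is supersingular when $\rbar|_{G_v}$ is irreducible. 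The paper avoids this by a small but essential dodge: it takes $\pi'$ to be the $\GL_2(F_v)$-socle of $\pi$, which is irreducible and supersingular by \cite[Corollary 10.2.3]{EGS} together with \cite[Theorem 1.5(i)]{BP}, and then observes that $\pi'^{K(1)}\subset \pi^{K(1)}$ is multiplicity free because a subrepresentation of a multiplicity free representation is multiplicity free. You need this (or an equivalent) step; as written, your proof establishes multiplicity freeness of $\pi^{K(1)}$ but not the existence of a \emph{supersingular} representation with that property.

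Two smaller points. First, your proposed construction via an elliptic curve with supersingular reduction is unlikely to satisfy $1$-genericity (the inertial exponents are too small); the robust route is the one the paper takes, namely the globalization result \cite[Corollary A.3]{GK}, which lets you prescribe a $1$-generic irreducible $\rbar|_{G_v}$ together with the Taylor--Wiles hypotheses. Second, you do not need to ``inspect the construction in \cite{BP}'' to see that $D_0(\rbar|_{G_v})$ is multiplicity free: that is already part of the statement of Theorem \ref{thm:intro} (and is Proposition \ref{prop:multone} in the body of the paper), so once the socle argument is in place nothing further about $D_0$ is required.
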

\begin{remark}
We know of no purely local proof of this result.
\end{remark}
\begin{proof}
We can and do choose $\rbar$ such that $\rbar|_{G_v}$ is $1$-generic and irreducible by \cite[Corollary A.3]{GK}.
Then the $\GL_2(F_v)$-socle $\pi'$ of $\pi$ is supersingular (and irreducible) by \cite[Corollary 10.2.3]{EGS} and \cite[Theorem 1.5(i)]{BP}, and $\pi'^{K(1)} \subset \pi^{K(1)}$ is multiplicity free by Theorem \ref{thm:intro}.
\end{proof}

The theorem is obtained by combining results of \cite{EGS} with a description of the submodule structure of generic $\GL_2(k_v)$-projective envelopes (see Theorem \ref{thm:structure}).
Note that this theorem precludes infinitely many representations constructed in the proof of \cite[Theorem 4.17]{Hu} from appearing in completed cohomology.
It is not clear to the authors whether the results of \cite{Breuil,EGS} uniquely characterize $\pi$ when $\rbar$ is tamely ramified.

We now make a brief remark on the genesis of this paper.
The second and third authors arrived independently at a proof of Theorem \ref{thm:intro} (in an unreleased preprint) following a different argument, but related to the strategy presented here which was outlined in an unreleased preprint by the first author.
Relating the two approaches led to this collaboration.
After our paper had been written, we were notified that Hu and Wang also obtained a similar result independently \cite{Hu-Wang}.

We now give a brief overview of the paper.
In Section \ref{sec:ext}, we describe the extension graph, which simplifies the combinatorics of Serre weights.
Section \ref{sec:proj} is the technical heart of the paper, where we describe the submodule structure of generic $\GL_2(\F_q)$-projective envelopes.
In Section \ref{sec:local}, we use the results of Section \ref{sec:proj} to give two different characterizations of a construction of \cite{BP}.
Finally, in Section \ref{sec:global}, we derive our main result.

\subsection{Acknowledgments}
Many of the ideas in this article, especially the combinatorics of Section \ref{sec:ext} and proof of Proposition \ref{prop:ext}, came out of the joint work \cite{LLLM2} of the first two authors with Bao V. Le Hung and Brandon Levin.
We thank them heartily for their collaboration.
The first author thanks Florian Herzig for answering questions and providing references on modular representation theory, Yongquan Hu for answering a question about $K(1)$-invariants, and Matthew Emerton for numerous long and enlightening discussions about $p$-adic Langlands. The second and third authors were visitors at I.H.E.S. when they first worked on this topic.
The debt this article owes to the work of Christophe Breuil, Matthew Emerton, Toby Gee, Vytautas Pa\v{s}k\={u}nas, and David Savitt will be obvious to the reader.
The first author was supported by the National Science Foundation under agreement No.~DMS-1128155.

\subsection{Notation}
\label{sec:notation}
We introduce some notation that will be in force throughout.
If $F$ is any field, we write $\overline{F}$ for a separable closure of $F$ and $G_F:= \gal(\overline{F}/F)$ for the absolute Galois group of $F$.
If $F$ is a global field and $v$ is a place of $F$, we fix an embedding $\overline{F} \inj \overline{F}_v$, and we write $I_v\subset G_v$ to denote the inertia and decomposition subgroups at $v$ of $G_{F}$. We further write $\varpi_v\in F_v$ to denote an uniformizer. 
If $W_{F_v}\leq G_{F_v}$ denotes the Weil group of $G_{F_v}$ we normalize Artin's reciprocity map $\mathrm{Art}_{F_v}: F_v^\times\ra W_{F_v}^{\mathrm{ab}}$ in such a way that the geometric Frobenius elements are sent to uniformizers.

Throughout the paper, the place $v$ will divide $p$, and $F_v/\Q_p$ will be an unramified extension of degree $f$.
Let $q = p^f$.
We fix a coefficient field $\F$ which is a finite extension of $\F_q$.
Without further mention, all representations will be over $\F$.
We fix an embedding $\iota_0:\F_q \inj \F$.
The letters $i$ and $j$ will denote elements of $\Z/f$.
Let $\iota_i = \iota_0\circ \varphi^i$ be the $i$-th Frobenius twist of $\iota_0$.

Let $G$ be the algebraic group $\res_{\F_q/\F_p} \GL_2$.
Let $Z\subset T$ (resp. $Z_{\GL_2}\subset T_{\GL_2}$) be the center in the diagonal torus in $\res_{\F_q/\F_p} \GL_2$ (resp. in $\GL_2$).
Note that the choice of $\iota_0$ gives an isomorphism 
\begin{equation}\label{eqn:splitting}
T\times_{\F_p} \F \cong \prod_{i\in \Z/f} T_{{\GL_2}_{/\F}}.
\end{equation}
Thus, the Weyl group $W$ of $(G,T)$ (and sometimes the analogous version for $\SL_2$) is identified with $S_2^f$. Let $W_{\GL_2}$ be the Weyl group of $(\GL_2,T_{\GL_2})$, we denote by $w_0$ the non trivial element of $W_{\GL_2}$.

Let $X^*(T):=X^*(T\times_{\F_p} \Fbar_p)$ be the character group which is identified with $(\Z^{2})^{f}$ by (\ref{eqn:splitting}) and let $\varepsilon'_{i}\in X^*(T)$ correspond to the $f$-tuple which is $(1,0)$ in the $i$-th coordinate and $(0,0)$ otherwise.
Let $\eta = \sum_i \varepsilon'_i$. 
We denote by $C_0$ the base $p$-alcove in $X^*(T)\otimes_{\Z}\R$, i.e. the set of $\lambda\in X^*(T)\otimes_{\Z}\R$ such that $0<\langle \lambda+\eta,\alpha^\vee\rangle <p$ for all positive coroots $\alpha^\vee$.
(We define the positive coroots with respect to the Borel of upper triangular matrices in all embeddings.)
Let $X^0(T) \subset X^*(T)$ be the subgroup generated by $\iota_i \circ\det$ for $i\in\Z/f$.
We say that a weight $\mu$ is \emph{$p$-restricted} if $0\leq\langle \mu,\alpha^\vee\rangle <p$ for all positive coroots $\alpha^{\vee}$.
It is customary to write $X_1(T)$ for the set of $p$-restricted weights.

Let $G^{\der}=\res_{\F_q/\F_p} \SL_2$ and $T^{\der}$ be the diagonal torus. We write $\Lambda_W=X^*(T^{\der})$ for the weight lattice for $G^{\der}$  and  $\Lambda_R \subset \Lambda_W$ for the root lattice.

Note that the root lattice of $G$ is canonically isomorphic to $\La_R$, and we fix this identification from now on.
Note moreover that the restriction map induces a surjection $X^*(T)\twoheadrightarrow \Lambda_W$ with kernel $X^0(T)$.
Let $\varepsilon_i$ be the image of $\varepsilon'_i$ via the surjection $X^*(T)\twoheadrightarrow \Lambda_W$.

Let $\pi$ be the action of Frobenius on $X^*(T)$ so that, for instance, $\pi \varepsilon'_i =\varepsilon'_{i+1}$.

For a dominant character $\mu\in X^*(T)$ we write $V(\mu)$ for the Weyl module defined in \cite[II.2.13(1)]{JantzenBook}. It has a unique simple $G$-quotient $L(\mu)$. 
If $\mu = \sum_i \mu_i^{(i)}$, where $\mu_i\in X^*(T_{\GL_2})$ and $\mu_i^{(i)}=\iota_i\circ\mu_i$ for $i\in\Z/f$,
is $p$-restricted then $L(\mu) = \otimes_i L(\mu_i)^{(i)}$  by the Steinberg tensor product theorem as in \cite[Theorem 3.9]{Herzig} (as usual $L(\mu_i)^{(i)}$ denotes the $i$-th Frobenius twist of $L(\mu_i)$).
Let $\Gamma$ be the group $G(\F_p) \cong \GL_2(\F_q)$.
Let $F(\mu)$ be the $\Gamma$-representation $L(\mu)|_{\Gamma}$, which remains irreducible by \cite[A.1.3]{Herzig}.
Note that $F(\mu) \cong F(\lambda)$ if and only if $\mu \cong \lambda \mod{(p-\pi)X^0(T)}$.

Let $W^{\der}_a$ denote the affine Weyl group for $G^{\der}$ which is canonically isomorphic to the affine Weyl group $W_a$ of $G$. 
It is the semidirect product $\Lambda_R \rtimes W$.
Let $\widetilde{W}$ be the extended affine Weyl group of $G$ and $\widetilde{W}^{\der}$ be the extended affine Weyl group of $G^{\der}$.
They are defined as the semidirect product $X^*(T) \rtimes W$ and $\Lambda_W \rtimes W$ respectively. 
Note that we have a surjective morphism $\tld{W}\onto  \tld{W}^{\der}$ induced by $X^*(T)\onto \Lambda_W$.
If $\lambda\in \Lambda_R$ (resp. $\lambda\in X^*(T)$, resp. $\lambda\in \Lambda_W$) we write $t_\lambda$ for the image of $\lambda\in \Lambda_R$ (resp. $\lambda\in X^*(T)$, resp. $\lambda\in \Lambda_W$) under the usual embedding $\Lambda_R \inj W_a$ (resp. $X^*(T) \inj \widetilde{W}$, resp. $\Lambda_W \inj \widetilde{W}^{\der}$), i.e. $t_{\lambda}$ is the translation by $\lambda$. Note that we can extend the Frobenius action on the affine Weyl groups by declaring  $(\pi s)_j = s_{j+1}$ for $s \in W$.
There is a multiplication by $p$ isomorphism $\widetilde{W} \ra pX^*(T) \rtimes W$ sending $\widetilde{w} = t_\omega w\mapsto \widetilde{w}_p = t_{p\omega} w$.
For $\widetilde{w}\in \widetilde{W}$ we will use $\cdot$ to denote the $p$-dot action $\widetilde{w}\cdot \mu = \widetilde{w}_p(\mu+\eta) - \eta$.

Let $\Omega \subset \widetilde{W}$ be the stabilizer of $C_0$ under the $p$-dot action and $\Omega^{\der}$ its image under the map $\tld{W}\onto\tld{W}^{\der}$.
For instance, when $f=1$, the set $\Omega^{\der}$ is formed by the elements $\id$ and $(12)t_{-\varepsilon}$.
(Note that, in the notation of \cite{LLLM2}, $\Omega$ and $\Omega^{\der}$ would be denoted as $\tld{W}^+_1$ and $\tld{W}^{+,\der}_1$ respectively.)


\section{The extension graph}\label{sec:ext}

In this section, we describe what is called the extension graph in \cite[\S 2]{LLLM2} for $\GL_2$.
The modifications from $\GL_3$ are straightforward.

\begin{defin}
Let $S_e = \{\varepsilon_i\}_i$.
For $J \subset S_e$, let 
\begin{equation}
\label{eqn:rep}
\omega_J = \sum_{\omega\in J} \omega.
\end{equation}
\end{defin}

The inclusion ${\Lambda}_W \into \widetilde{W}^{\mathrm{der}}$ (resp.~$X^*({T}) \into \tld{W}$) induces an isomorphism $\iota^{\der}:\La_W/{\La}_R \stackrel{\sim}{\rightarrow} \widetilde{{W}}^{\der}/{W}^{\der}_a$ (resp.~$\iota:X^*({T})/{\Lambda}_R \stackrel{\sim}{\rightarrow} \tld{{W}}/{W}_a$).
Let $\mathcal{P}^{\mathrm{der}} \subset {\Lambda}_W \times \Omega^{\der}$ be the subset of pairs $(\omega,\tld{w})$ with $\iota^{\der}(-\pi^{-1}(\omega)+{\Lambda}_R) = \tld{w}{W}^{\mathrm{der}}_a$.

We similarly define $\mathcal{P} \subset X^*({T}) \times \Omega$.
Note that restriction gives a natural surjection $\mathcal{P}\onto \mathcal{P}^{\der}$.

The following lemma is easily checked.

\begin{lem}\label{lem:easy}
The map $(\omega,\tld{w})\mapsto \omega$ induces a bijection $\beta:\mathcal{P}^{\der}\stackrel{\sim}{\longrightarrow}\La_W$.
\end{lem}

Given $J\subseteq S_e$ we write $(\omega_J,\tld{w}_J)$ for the element of $\mathcal{P}^{\der}$ mapped to $\omega_J$ via $\beta$, with decomposition $\widetilde{w}_J = w_Jt_{-\pi^{-1}\omega_J}$ where $w_J\in W$.


Following \cite[Definition 2.1.2]{LLLM2} we have 
\begin{defin}\label{defn:regular}
We say that a weight $\lambda \in X_1(T)$ is {\it regular $p$-restricted} (or simply $p$-regular) if $0\leq \langle \lambda,\alpha^\vee \rangle <p-1$ for all positive roots $\alpha \in \Lambda_R$. 
We write $X_{\mathrm{reg}}(T)\subseteq X_1(T)$ for the set of regular $p$-restricted weights.
\end{defin}

Let $\mu$ be an element of $X_{\mathrm{reg}}(T)/(p-\pi)X^0({T})$. 
While we will often fix some lift of $\mu$ in $X^*({T})$, the constructions below will not depend on the choice of this lift.
We define a map
\begin{align}\label{eqn:tr}
\mathcal{P}^{\der} &\ra X^*({T})/(p-\pi)X^0({T}) \\
(\omega,\tld{w}) &\mapsto \tld{w}'\cdot(\mu-\eta+\omega'),\nonumber
\end{align}
where $(\omega',\tld{w}')\in \mathcal{P}$ is a lift of $(\omega,\tld{w})$.
The map (\ref{eqn:tr}) does not depend on the choice of lift. 
Then we define
\[\mathfrak{t}'_\mu: {\Lambda}_W \ra X^*({T})/(p-\pi)X^0({T})\]
to be the composition of $\beta^{-1}$ with (\ref{eqn:tr}).



Define $\Lambda^\mu_W$ to be the set
\[
\Lambda^\mu_W=\left\{
\omega \in \Lambda_W  :  0<\langle\omega+\mu,\alpha^\vee\rangle<p
\right\}.
\]
(where we take the image of $\mu$ in ${\Lambda}_W$).
Let $\mathfrak{t}_\mu$ be the restriction of $\mathfrak{t}_\mu'$ to ${\Lambda}_W^\mu$.

We establish some properties of $\mathfrak{t}_\mu$. 

\begin{prop}\label{prop:inj}
Let $\mu\in X_{\mathrm{reg}}(T)$. 
If $\omega\in \La_W^\mu$, then any lift to $X^*(T)$ of $\mathfrak{t}_\mu(\omega)$ is regular and $p$-restricted.
Moreover, the map $\mathfrak{t}_\mu$ is injective.
\end{prop}
\begin{proof}
The proof is analogous to that of \cite[Proposition 2.1.3]{LLLM2}.
\end{proof}

The following proposition gives symmetries of the extension graph.

\begin{prop}\label{prop:sym}
Let $\mu\in X_{\mathrm{reg}}(T)$. Let $\omega\in\La_W^\mu$ and let $\lambda-\eta$ be a lift of  $\mathfrak{t}_\mu(\omega)$ and $\beta^{-1}(\omega) = (\omega,\tld{w})$. 
Then 
\[\mathfrak{t}_\lambda(\nu) = \mathfrak{t}_\mu(w^{-1}(\nu)+\omega)\]
for $\nu\in {\La}_W^{\lambda}$, where $w \in {W}$ is the image of $\tld{w}$.
\end{prop}
\begin{proof}
This follows by a direct computation analogous to the one in the proof of \cite[Proposition 2.1.5]{LLLM2}.
\end{proof}

We now recall the definition of the depth of a weight.

\begin{defin}
\label{defin:deep}
Let $\lambda\in X^*(T)$ be a dominant weight and let $n\in \N$. 
We say that $\lambda$ lies $n$-deep in its alcove if for each positive coroot $\alpha^{\vee}$ there exists an integer $m_\alpha\in \Z$ such that $pm_{\alpha}+n<\langle \lambda+\eta, \alpha^{\vee}\rangle <p(m_\alpha+1)-n$.
\end{defin}

Note that Definition \ref{defin:deep} above is consistent with \cite[Definition 2.1.9]{LLLM2} and that $\lambda \in X_1(T)$ is $p$-regular if and only if it is $0$-deep.

\begin{defin}\label{defin:genchar}
Let $\mu = \sum_i \mu_i^{(i)} \in X^*(T)$ be a $p$-restricted weight.

We say that $\mu$ is \emph{generic} if $\mu-\eta$ (which lies in the closure of the alcove $C_0$) is $1$-deep.

An element of $\mu\in X^*({T})/(p-\pi)X^0({T})$ is generic if any lift of $\mu$ is. Note that a generic weight is $p$-regular.

\end{defin}

Following \cite{LLLM2}, we introduce the notion of adjacency in the extension graph.

\begin{defin}\label{defin:adj}
Two elements $\omega,\ \omega'\in \Lambda^{\mu}_W$ are said to be \emph{adjacent} if $\omega-\omega'\in\{\pm\varepsilon_j\}$ for some index $j$.
\end{defin}

We now justify the term ``extension graph''.
Recall that $\Gamma$ denotes the group $G(\F_p) \cong \GL_2(\F_q)$.
A \emph{Serre weight} is an absolutely irreducible representation of $\Gamma$ over an $\F$-vector space.
Each Serre weight is obtained by restriction to $\Gamma$ from an irreducible algebraic representation of ${G}$ of highest weight $\lambda\in X_1({T})$, and this process gives a bijection between from $X_1({T})/(p-\pi)X^0({T})$ to the set of Serre weights of $\Gamma$ (as described in \cite[Theorem 3.10]{Herzig}). 
As we mentioned in \S \ref{sec:notation}, given $\lambda\in X_1({T})$ we write $F(\lambda)$ for the Serre weight corresponding to $\lambda$.
We say that a Serre weight $F$ is $p$-\emph{regular} if $F\cong F(\lambda)$ where $\lambda\in X_1(\un{T})$ is regular $p$-restricted (cf.~Definition \ref{defn:regular}).
Given $\mu\in{C}_0$ and $\omega\in \un{\La}_W^\mu$, we get a corresponding $p$-regular Serre weight $F(\mathfrak{t}_\mu(\omega))$.
One can prove (cf.~\cite[Propositions 2.1.3 and 2.1.4]{LLLM2}) that $F(\mathfrak{t}_\mu(-))$ induces a bijection between the set ${\Lambda}_W^\mu$ and the set of $p$-regular Serre weights of $\Gamma$ with the same central character as $F(\mu-\eta)$.

\begin{prop}\label{prop:extgraph}
Let $\mu\in C_0$. 
Let $\omega, \omega' \in \Lambda_W^\mu$ such that $\lambda-\eta:=\mathfrak{t}_\mu(\omega)$ and $\lambda'-\eta:=\mathfrak{t}_\mu(\omega')$ are generic.
Then 
\[\dim \Ext_\Gamma^1(F(\lambda-\eta),F(\lambda'-\eta)) = \dim \Ext_\Gamma^1(F(\lambda'-\eta),F(\lambda-\eta)) \leq 1\]
with equality if and only if $\omega$ and $\omega'$ are adjacent in the graph $\Lambda_W^{\mu}$.
\end{prop}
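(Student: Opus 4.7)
The plan is to combine Proposition \ref{prop:sym} with the classical description of $\Ext^1$ between generic irreducible $\GL_2(\F_q)$-representations. First, applying Proposition \ref{prop:sym} to the pair $(\omega,\omega')$ allows one to reduce to the case $\omega = 0$ (so $\lambda = \mu$): the map $\omega' \mapsto w_J^{-1}\omega' + \omega$ is a bijection of $\Lambda_W$ preserving adjacency, since $w_J \in W$ permutes $\{\pm\omega^{(j)}\}$. The stated symmetry of Ext dimensions then follows automatically from the symmetry of the adjacency relation once we establish the criterion ``$\dim_\F\Ext^1 = 1$ iff adjacent, $\dim_\F\Ext^1 \leq 1$ always.'' So it suffices to show, for $\omega' \in \Lambda_W^\mu \setminus \{0\}$,
\[
\dim_\F\Ext^1_\Gamma(F(\mu-\eta),F(\mathfrak{t}_\mu(\omega')-\eta)) = \begin{cases} 1 & \omega' \in \{\pm\omega^{(j)}\} \text{ for some } j, \\ 0 & \text{otherwise.}\end{cases}
\]

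The key external input is the classical Ext computation between generic simples of $\Gamma = \GL_2(\F_q)$. Writing $F(\mu) \cong \bigotimes_i F(\mu_i)^{(i)}$ via Steinberg's tensor product theorem, a standard result (due to Andersen--Jorgensen--Landrock, cf.\ the $\GL_n$ analogue exploited in \cite{LLLM2}) asserts that for generic $p$-restricted $\mu, \mu'$, $\dim\Ext^1_\Gamma(F(\mu),F(\mu')) \leq 1$, with equality iff there is a unique index $j$ with $\mu_j \neq \mu'_j$ and $\mu'_j$ is obtained from $\mu_j$ by the $p$-dot reflection across a wall of the alcove containing $\mu_j$. The remaining step is combinatorial: unfolding $\mathfrak{t}'_\mu(\omega') = \widetilde{w}_J \cdot (\mu + \nu + \omega_J - \eta)$ for $\omega' = \omega_J + \nu$, one verifies that the $\ell$-th Frobenius component of $\mathfrak{t}'_\mu(\omega') - (\mu-\eta)$ is nonzero precisely when $\omega^{(\ell)} \in J$ or $\nu$ has nonzero $\omega^{(\ell)}$-component. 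For $\omega' = \pm\omega^{(j)}$ only the $j$-th Frobenius coordinate is affected, producing exactly a wall-crossing reflection there; for any other nonzero $\omega' \in \Lambda_W^\mu$, either at least two Frobenius coordinates are modified, or a single coordinate is shifted beyond one reflection, and the Ext vanishes.

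The main technical obstacle is bookkeeping the $\widetilde{w}_J$-action combined with the reduction modulo $(p-\pi)X^0(T)$: one must verify that Jordan--H\"{o}lder classes are tracked correctly and that distinct adjacency cases do not collapse under this reduction. Proposition \ref{prop:inj} and the $1$-genericity hypothesis together ensure that all weights appearing stay safely in the interior of the appropriate alcove and that $\mathfrak{t}_\mu$ does not identify non-adjacent pairs with adjacent ones, so the combinatorial description matches the Ext criterion on the nose.
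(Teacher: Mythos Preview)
Your reduction via Proposition~\ref{prop:sym} to the case $\omega=0$ is exactly the paper's first step, and the overall strategy (read off $\Ext^1$ from the first radical layer of the projective envelope) is the right idea. However, there is a genuine error in both your statement of the $\Ext^1$ criterion and your combinatorial unwinding of $\mathfrak{t}'_\mu$.

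The claim that $\Ext^1_\Gamma(F(\mu-\eta),F(\mu'-\eta))\neq 0$ iff \emph{a single} Frobenius component $\mu_j$ differs from $\mu'_j$ by a wall reflection is false for $f>1$. By Theorem~\ref{thm:projenv}(2), the middle layer of $R_{\mu_j}$ is $F(w_0 t_{-\omega_0}\cdot(\mu_j-\omega_0))\otimes F(\omega_0)^{(1)}$; the Frobenius twist on the second factor forces the $(j{+}1)$-th component to change as well, by $\pm\omega_0$. Thus every $\Ext^1$ partner of $F(\mu-\eta)$ differs in \emph{two consecutive} Frobenius coordinates, not one, and there are $2f$ such partners (matching $\{\pm\omega^{(j)}\}_j$), not $f$. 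Correspondingly, your combinatorial assertion that the $\ell$-th component of $\mathfrak{t}'_\mu(\omega')-(\mu-\eta)$ is nonzero only when $\omega^{(\ell)}\in J$ or $\nu$ has an $\omega^{(\ell)}$-component overlooks the shift $\pi^{-1}$ in $\widetilde{w}_J=w_J t_{-\pi^{-1}\omega_J}$: for $\omega'=\omega^{(j)}$ one has $\pi^{-1}\omega_J=\omega^{(j-1)}$, so both $w_J$ and the translation act nontrivially in the $(j{-}1)$-th factor, and a direct computation shows the $(j{-}1)$-th component of the difference is nonzero.

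The paper avoids quoting an external $\Ext^1$ formula altogether. Instead it observes that since each $\gr^k_\otimes R_\mu$ is semisimple (Propositions~\ref{prop:tensor} and~\ref{prop:Wdecomp}), the tensor filtration \emph{is} the radical filtration, so $\Ext^1_\Gamma(F(\mu-\eta),\sigma')$ is read off directly from $\gr^1_\otimes R_\mu=\bigoplus_{\#J=1}\sigma_J$; injectivity of $\mathfrak{t}_\mu$ (Proposition~\ref{prop:inj}) then identifies these $2f$ constituents with $F(\mathfrak{t}_\mu(\pm\omega^{(j)}))$. Your approach can be salvaged by replacing the incorrect one-coordinate criterion with the correct two-consecutive-coordinates description (this is what actually comes out of Andersen--J{\o}rgensen--Landrock or \cite[\S3]{BP}), and then redoing the combinatorial match; but at that point you are essentially reproving Propositions~\ref{prop:tensor} and~\ref{prop:Wdecomp} in disguise.
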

\begin{proof}
By Proposition \ref{prop:sym}, we can assume without loss of generality that $\omega = 0$.
Then the extensions of $\sigma:=F(\mu-\eta)$ are given by the first layer of the cosocle filtration of the projective envelope of $\sigma$.
The proposition now follows from Propositions \ref{prop:tensor} and \ref{prop:Wdecomp} (which do not depend on this proposition).
\end{proof}

We next show that a set of modular Serre weights forms a hypercube in the extension graph.  

We write $\mathscr{W}$ for the set of Serre weights.
By the discussion preceeding Proposition \ref{prop:extgraph}, this is in bijection with the image of $X_1(T)$ in $X^*(T)/(p-\pi)X^0(T)$.
Write $\mathscr{W}_{\mathrm{reg}}$ for the set of regular Serre weights which is in bijection with the image of $X_{\mathrm{reg}}(T)$ in $\mathscr{W}$. We have a bijection $\mathcal{R}:X^*(T)\ra X^*(T)$  (also called \emph{Herzig reflection}) defined by $\lambda\mapsto w_0t_{-\eta}\cdot \lambda$. It induces a bijection $\mathcal{R}:\mathscr{W}_{\mathrm{reg}}\ra \mathscr{W}_{\mathrm{reg}}$.

\begin{defin}
\label{defn:generic}
A Serre weight $F$ is said to be $n$-deep (resp.~generic) if we can write $F\cong F(\lambda)$ for a weight $\lambda\in X_1(T)$ which is $n$-deep (resp.~generic).
\end{defin}

For $s\in W$ and a character $\mu\in X^*(T)$, we denote the corresponding Deligne--Lusztig representation as in \cite[Lemma 4.2]{Herzig} by $R_s(\mu)$. 

We always assume that $R_s(\mu)$ is defined over $W(\F)$, the ring of Witt vectors of $\F$.
Given a Deligne--Lusztig representation $R_s(\mu)$ as above, we write $\JH(\overline{R}_s(\mu))$ to denote the set of Jordan--H\"older constituents of the mod $p$ reduction of a $\Gamma$-invariant $W(\F)$-lattice inside $R_s(\mu)$.

It is easy to see that if $\mu-\eta$ is $n$-deep 
then any weight $F(\lambda-\eta)\in\JH(\overline{R}_s(\mu))$ is $n-1$-deep. 
In particular, if $\mu-\eta$ is $1$-deep, then $\#\JH(\overline{R}_s(\mu))=2^f$ and all the Jordan-H\"older constituents in $\JH(\overline{R}_s(\mu))$ are $0$-deep.
Following \cite[\S 9.1]{GHS} an $L$-parameter for $G$ is, in our context, equivalent to a continuous homomorphism $I_{F_v}\rightarrow \GL_2(\F)$ which extends to $G_{F_v}$. Given an inertial $L$-parameter $\tau$ we can associate a Deligne--Lusztig representation $V_{\phi}(\tau)$ following \cite[Proposition 9.2.1]{GHS}. We define the set $W^?(\tau)$ as
\[
W^?(\tau)=\{ \mathcal{R}(F),\ F\in\JH(\overline{V_{\phi}(\tau)})\}
\]
where, similarly as above, the notation $\JH(\overline{V_{\phi}(\tau)})$ stands for the set of Jordan--H\"older constituents of the mod $p$ reduction of a $\Gamma$-invariant $W(\F)$-lattice inside $V_{\phi}(\tau)$.

\begin{prop}\label{prop:weights}
Suppose that $\tau$ is an inertial $L$-parameter such that $V_{\phi}(\tau) = R_s(\mu)$. Assume that $\mu-\eta$ is $1$-deep.
Then $W^?(\tau) = F(\mathfrak{t}_\mu(\{s\omega_J:J\subset S_e\}))$.
\end{prop}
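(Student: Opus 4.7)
The plan is to unfold the definition of $W^?(\tau)$ and match the resulting collection of Serre weights against $F(\mathfrak{t}_\mu(\{s\omega_J:J\subset S_e\}))$ by an explicit computation with the $p$-dot action. By construction $W^?(\tau) = \mathcal{R}\big(\JH(\overline{V_{\phi}(\tau)})\big) = \mathcal{R}\big(\JH(\overline{R_s(\mu)})\big)$, so it suffices to enumerate the Jordan--H\"older constituents of $\overline{R_s(\mu)}$ and then apply the Herzig reflection $\mathcal{R}$ factor by factor.

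First, I would invoke the standard generic decomposition formula for reductions of Deligne--Lusztig characters of $\GL_2(\F_q)$, for instance \cite[Lemma 4.2]{Herzig}. Since $\mu-\eta$ is $1$-deep, the paragraph preceding the proposition already gives $\#\JH(\overline{R_s(\mu)})=2^f$ and all constituents are $1$-generic in the sense of Definition \ref{defin:genchar}. Writing $\mu = \sum_i \mu_i^{(i)}$ and $s=(s_i)_i\in S_2^f$, this formula expresses each constituent as $F(\lambda_J-\eta)$ for an explicit $\lambda_J\in X^*(T)$, indexed by a subset $J\subset S_e$ recording, twist by twist, which of the two possible Jantzen factors is chosen.

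Next I would verify the key identity
\[
\mathcal{R}\big(F(\lambda_J-\eta)\big) \;=\; F(\mathfrak{t}_\mu(s\omega_J)) \qquad (J\subset S_e),
\]
by direct calculation. Unwinding the definitions $\mathfrak{t}_\mu(\omega_{J'}+\nu) = \widetilde{w}_{J'}\cdot(\mu+\nu+\omega_{J'}-\eta)$ with $\widetilde{w}_{J'} = w_{J'}t_{-\pi^{-1}\omega_{J'}}$ and $\mathcal{R}(\lambda) = w_0 t_{-\eta}\cdot \lambda$, the identity reduces to matching the action of the $\Omega$-element $\widetilde{w}_{J'}$ and the shift by $s\omega_J$ against the Frobenius-twist-by-twist description of $\lambda_J$. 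The decomposition $s\omega_J = \omega_{J'}+\nu$ supplied by Lemma \ref{lem:easy} pins down the relevant element of $\Omega$ and of $\Lambda_R$, and Proposition \ref{prop:inj} ensures that the $2^f$ subsets $J$ yield $2^f$ distinct Serre weights, exhausting $\JH(\overline{R_s(\mu)})$.

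The main obstacle will be the bookkeeping in this last step: one must keep straight the Frobenius-twist index $i\in\Z/f$ (which controls the Jantzen decomposition of $\overline{R_s(\mu)}$ twist by twist) against the subset index $J\subset S_e$ (which labels both the elements of $\Omega$ via Lemma \ref{lem:easy} and the vertices of the extension graph). The Weyl element $s$ defining $R_s(\mu)$ shifts the base point of the hypercube, which is why the claim reads $s\omega_J$ rather than $\omega_J$; the presence of $\mathcal{R}$ is what converts the Jantzen-indexing of constituents of $\overline{R_s(\mu)}$ into the Herzig-indexing used to define $W^?(\tau)$.
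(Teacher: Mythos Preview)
Your approach is sound but differs from the paper's. You work directly from the definition $W^?(\tau)=\mathcal{R}\big(\JH(\overline{R_s(\mu)})\big)$: enumerate the $2^f$ Jordan--H\"older constituents of $\overline{R_s(\mu)}$ via the standard $\GL_2(\F_q)$ decomposition, apply the Herzig reflection $\mathcal{R}$ factor by factor, and match the result against $\mathfrak{t}_\mu(s\omega_J)$ by a twist-by-twist calculation. The paper instead passes through the set $W_{\mathrm{obv}}(\tau)$ of ``obvious'' Serre weights from \cite[\S 7.1]{GHS}: the Hodge--Tate weights of the obvious crystalline lifts are $wt_{s\pi\omega-p\omega}(\mu)$ as $wt_{-\omega}$ ranges over $\Omega$, and a one-line manipulation with the $p$-dot action identifies these with $\mathfrak{t}'_\mu(s\omega_J)$ for $J\subset S_e$. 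The conclusion then follows from the known equality $W^?(\tau)=W_{\mathrm{obv}}(\tau)$ (\cite[\S 4.2]{Gee11}).

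Your route is more self-contained (no appeal to the crystalline-lift framework or to the equality $W^?=W_{\mathrm{obv}}$), but the bookkeeping you flag---tracking the Frobenius-twist index against the subset $J$, and handling the interaction of $s$, $\mathcal{R}$, and the $\Omega$-decomposition of $s\omega_J$---is real and somewhat tedious. The paper's route hides exactly this bookkeeping inside the cited references and reduces the argument to recognising the shape $wt_{(s\omega-\omega)+\omega-p\pi^{-1}\omega}\cdot(\mu-\eta)$ as $\mathfrak{t}'_\mu(s\omega)$. One minor caution: \cite[Lemma~4.2]{Herzig} is the construction of $R_s(\mu)$, not its mod-$p$ Jordan--H\"older decomposition; you would want to cite something like Diamond's decomposition or the explicit formulas in \cite{BP} for that step.
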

\begin{proof}
The obvious crystalline lifts, in the sense of \cite[\S 7.1]{GHS}, have Hodge--Tate weights $wt_{s\pi \omega - p\omega}(\mu)$, where $wt_{-\omega}$ ranges over all elements of $\Omega$.
Noting that 
\begin{align*}
wt_{s \omega - p\pi^{-1}\omega}(\mu) - \eta &\equiv wt_{ -\pi^{-1}\omega}\cdot(\mu+s\omega-\eta) \text{ mod } (p-\pi)X^0(T)
\\ &\equiv\mathfrak{t}_\mu(s\omega) \text{ mod } (p-\pi)X^0(T)
\end{align*}
and that the image of $\{\omega\mid wt_{-\omega} \in \Omega\}$ in $\Lambda_W$ is $\{\omega_J\mid J\subset S_e\}$, we have that, in the notation of \cite{GHS}, $W_{\mathrm{obv}}(\tau) = F(\mathfrak{t}_\mu(\{s\omega_J:J\subset S_e\}))$.
Finally, $W^?(\tau) = W_{\mathrm{obv}}(\tau)$ (see \cite[\S 4.2]{Gee11}).
\end{proof}

\section{Generic $\GL_2(\F_q)$-projective envelopes} \label{sec:proj}

In this section, we describe the submodule structure of \emph{generic} $\GL_2(\F_q)$-projective envelopes, i.e.~$\GL_2(\F_q)$-projective envelopes of  Serre weights which are generic in the sense of Definition \ref{defn:generic}.

\vspace{2mm}

Recall that $\Gamma$ is the group $G(\F_p) \cong \GL_2(\F_q)$ and if $R$ is a $\Gamma$-representation, we write $R^{(i)}$ to denote its $i$-th Frobenius twist. In what follows we set $\mu = \sum_{i=0}^{f-1} \mu_i^{(i)}\in X^*(T)$ where $\mu_i = (a_i,b_i)\in\Z^2$.

Assume that $\mu -\eta$ is dominant. If we write $\mu-\eta=\sum_{i=0}^{f-1}r_i\varepsilon'_i+\sum_{i=0}^{f-1}d_i(1,1)^{(i)}$ Breuil and Paskunas define a $\Gamma$-representation $(R_{(r_i)_i})\otimes {\det}^{\sum_ip^id_i}$ in \cite{BP},\S3. We define $R_\mu$ to be the dual of the representation $(R_{(r_i)_i})\otimes {\det}^{\sum_ip^id_i}$.

The following known theorem gives a coarse description of generic $\Gamma$-projective envelopes.
\begin{thm}\label{thm:projenv}
Assume that  $1\leq a_i - b_i \leq p-1$ for all $i$. Then $R_\mu = \otimes_{i=0}^{f-1} R_{\mu_i}^{(i)}$, where 
\begin{enumerate}
\item $R_{\mu_i}$ is a $\Gamma$-representation with a filtration $\Fil^0 R_{\mu_i} = R_{\mu_i}$, $\Fil^1 R_{\mu_i} \cong V(w_0t_{(-1,1)} \cdot (\mu_i-\varepsilon'_0))$, $\Fil^2 R_{\mu_i} \cong F(\mu_i-\varepsilon'_0)$, and $\Fil^3 R_{\mu_i} = 0$, and 
\item $\gr^0R_{\mu_i}$ and $\gr^2R_{\mu_i}$ are isomorphic to $F(\mu_i-\varepsilon'_0)$ and $\gr^1 R_{\mu_i}$ is isomorphic to $F(w_0t_{-\varepsilon'_i} \cdot (\mu_i-\varepsilon'_0))\otimes F(\varepsilon'_0)^{(1)}$.
\end{enumerate}
Moreover, if there exists an index $i$ such that $a_i-b_i>1$ then $R_{\mu}$ is a projective (and injective) envelope of the weight $F(\mu-\eta)$. Else, if $a_i-b_i=1$ for all $i$, then the representation $R_\mu$ is isomorphic to the direct sum of the projective (and injective) envelope of the weight $F(\mu-\eta)$ and a twist of the Steinberg representation.
\end{thm}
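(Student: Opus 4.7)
The plan is to verify the theorem by translating the constructions of \cite{BP}, \S3, into our notation. The representation $R_{(r_i)_i}\otimes \det^{\sum_i p^i d_i}$ is built in \emph{loc.\ cit.}\ as an explicit tensor product of Frobenius twists of $\GL_2(\F_p)$-representations; dualizing and absorbing the determinant twist into each factor yields the decomposition $R_\mu = \otimes_{i=0}^{f-1} R_{\mu_i}^{(i)}$ asserted in the theorem.

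The filtration and graded pieces of each $R_{\mu_i}$ are essentially the content of \cite{BP}, \S3, where $R_{r_i}$ appears as a uniserial submodule of a principal series for $\GL_2(\F_p)$ whose middle layer reduces a Weyl module mod $p$. Dualizing and applying the linkage principle identifies $\Fil^1 R_{\mu_i}$ with (a reduction of) $V(t_{(p,-p)}w_0\cdot(\mu_i-\omega_0))$; this Weyl module has length two, and its two Jordan--H\"older constituents are $F(\mu_i-\omega_0)$ and the non-$p$-restricted weight $F(w_0 t_{-\omega^{(i)}}\cdot(\mu_i-\omega_0))\otimes F(\omega_0)^{(1)}$, which matches the claimed description of $\gr^1 R_{\mu_i}$ together with $\gr^0 R_{\mu_i}\cong \gr^2 R_{\mu_i}\cong F(\mu_i-\omega_0)$.

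To establish the envelope statement in the generic range $a_i - b_i \geq 2$ for some $i$, one checks that each factor $R_{\mu_i}$ is projective over $\GL_2(\F_p)$ with simple cosocle $F(\mu_i-\omega_0)$: the cosocle is immediate from the filtration, while projectivity is obtained in \cite{BP} by exhibiting $R_{\mu_i}$ as a direct summand of an appropriate principal series (or, equivalently, by matching Brauer characters against the known dimension formula for projective envelopes of $\GL_2(\F_p)$). Tensoring Frobenius twists then produces the projective envelope of $F(\mu-\eta)$ as a $\Gamma$-representation, using the factorization of projective envelopes across Frobenius twists in the generic range.

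The borderline case $a_i - b_i = 1$ for all $i$ requires separate handling: here $F(\mu-\eta)$ is a character and each $R_{\mu_i}$ splits off a Frobenius twist of the Steinberg representation, since the relevant principal series of $\GL_2(\F_p)$ induced from a character is not indecomposable. Tensoring these decompositions produces the stated direct-sum decomposition. The main obstacle is purely notational---reconciling the conventions of \cite{BP} (which works with restricted weights $(r_i)_i$ for $\SL_2(\F_{p^f})$) with the alcove-geometric, $p$-dot-action conventions used here, and tracking the various duals and twists through each step. Once the dictionary is set up, every assertion of the theorem reduces to an explicit verification already carried out in \cite{BP}.
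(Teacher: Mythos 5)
You take the same route as the paper, whose entire proof of this theorem is the citation to \cite[\S 3, Lemmas 3.4, 3.5]{BP}: everything is reduced to the explicit construction of Breuil--Pa\v{s}k\={u}nas, translated into the present notation, and read purely as a citation your argument reaches the right conclusion.

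However, your paraphrase of what actually happens in \emph{loc.\ cit.} misdescribes the mechanism at the crucial step, so it would not survive as a self-contained proof. The factors $R_{\mu_i}$ are $\Gamma=\GL_2(\F_q)$-representations (restrictions to $\Gamma$ of algebraic $\GL_2$-modules carrying the stated three-step filtration), not $\GL_2(\F_p)$-modules, and they are not projective when $f>1$: $\dim R_{\mu_i}=2p$ is not divisible by $q=p^f$, so no single factor can be projective over $\Gamma$, and ``projective over $\GL_2(\F_p)$'' is not the relevant notion (nor can $R_{\mu_i}$ be a submodule of a principal series of $\GL_2(\F_p)$, whose dimension is only $p+1$). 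The genuine content of \cite[Lemmas 3.4, 3.5]{BP} is precisely that the full tensor product $\otimes_{i} R_{\mu_i}^{(i)}$ --- the restriction to the finite group of an injective hull over the Frobenius kernel --- remains injective, hence projective, over $\Gamma$, and this is established there by explicit socle and dimension computations, not by a formal ``factorization of projective envelopes across Frobenius twists'' over $\GL_2(\F_p)$. Likewise, in the degenerate case $a_i-b_i=1$ for all $i$, a factor-by-factor splitting off of a Steinberg twist would yield $2^f$ direct summands, whereas the theorem (and \cite{BP}) produce exactly two; the splitting is a global phenomenon, not one inherited from the individual factors. Since you ultimately defer every verification to \cite{BP}, the statement is adequately justified, but these points in your sketch should be corrected if it is meant to indicate how the verification actually goes.
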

\begin{proof}
See \cite[\S 3, Lemmas 3.4, 3.5]{BP}. 
\end{proof}

The filtrations on $R_{\mu_i}$ induce a tensor multifiltration on $R_\mu$.
More precisely, the set $\{0,1,2\}^f$ has a partial order so that $(k_i)_i = \mathbf{k} \leq \mathbf{k}' = (k'_i)_i$ if $k_i\leq k'_i$ for all $i\in \Z/f$.
We write $\mathbf{k} < \mathbf{k}'$ if $\mathbf{k}\leq \mathbf{k}'$ and $\mathbf{k}\neq \mathbf{k}'$.
For $\mathbf{k} = (k_i)_i \in \{0,1,2\}^f$, let $\Fil^{\mathbf{k}}R_\mu := \otimes_i \Fil^{k_{i+1}}R_{\mu_i}^{(i)}$.
Then $\Fil^{\mathbf{k}'}R_\mu \subsetneq \Fil^{\mathbf{k}}R_\mu$ if and only if $\mathbf{k} < \mathbf{k}'$.
Let $\Fil^{>\mathbf{k}}R_\mu = \sum_{\mathbf{k}< \mathbf{k}'} \Fil^{\mathbf{k}'}R_\mu$.
Let $\gr^{\mathbf{k}} R_\mu = \Fil^{\mathbf{k}}R_\mu/\Fil^{>\mathbf{k}}R_\mu$.
To ease notation, we will also denote $\gr^{\mathbf{k}} R_\mu$ by $W_{\mathbf{k}}$.
For $\mathbf{k} = (k_i)_i \in \{0,1,2\}^f$, let $|\mathbf{k}| = k = \sum_i k_i$.
There is also the tensor filtration $\Fil^k_\otimes R_\mu = \sum_{|\mathbf{k}| = k} \Fil^{\mathbf{k}} R_\mu$.
Note in particular that for all $\mathbf{k} \in \{0,1,2\}^f$ we have a natural surjection $R_\mu/\Fil^{>\mathbf{k}}R_\mu\surj R_\mu/\Fil^{k+1}_\otimes R_\mu$ whose restriction to $W_{\mathbf{k}}\subseteq R_\mu/\Fil^{>\mathbf{k}}R_\mu$ is injective.

\begin{prop} \label{prop:tensor}
$\gr^k_{\otimes} R_\mu = \oplus_{|\mathbf{k}| = k} W_{\mathbf{k}}$.
\end{prop}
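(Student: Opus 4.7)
The plan is to construct a natural $\Gamma$-equivariant map
\[
\phi: \bigoplus_{|\mathbf{k}|=k} W_{\mathbf{k}} \longrightarrow \gr^k_\otimes R_\mu
\]
and show it is an isomorphism. For each $\mathbf{k}$ with $|\mathbf{k}|=k$, the inclusion $\Fil^{\mathbf{k}}R_\mu \subset \Fil^k_\otimes R_\mu$ is immediate from the definitions, and $\Fil^{>\mathbf{k}}R_\mu \subset \Fil^{k+1}_\otimes R_\mu$ because every $\mathbf{k}' > \mathbf{k}$ has $|\mathbf{k}'| \geq k+1$. Taking the quotient gives the component $W_{\mathbf{k}} \to \gr^k_\otimes R_\mu$, and summing produces $\phi$.

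Surjectivity of $\phi$ is essentially a tautology: by definition $\Fil^k_\otimes R_\mu = \sum_{|\mathbf{k}|=k}\Fil^{\mathbf{k}}R_\mu$, so every class in $\gr^k_\otimes R_\mu$ is represented by a sum of elements lifted from the various $\Fil^{\mathbf{k}}R_\mu$, which by construction lie in the image of $\phi$.

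For injectivity, the idea is that the entire statement is really an identity for tensor products of filtered finite-dimensional vector spaces, and the $\Gamma$-action is passive data once the filtrations themselves are $\Gamma$-stable. Concretely, I would choose an (a priori non-$\Gamma$-equivariant) vector-space splitting $R_{\mu_i}=R_{\mu_i}^0\oplus R_{\mu_i}^1\oplus R_{\mu_i}^2$ of the three-step filtration on each factor, inducing a decomposition $R_\mu = \bigoplus_{\mathbf{k}} R_\mu^{\mathbf{k}}$ with $R_\mu^{\mathbf{k}} = \bigotimes_i (R_{\mu_i}^{k_i})^{(i)}$. Under this splitting one checks instantly that $\Fil^{\mathbf{k}}R_\mu = \bigoplus_{\mathbf{k}'\geq \mathbf{k}} R_\mu^{\mathbf{k}'}$, $\Fil^{>\mathbf{k}}R_\mu = \bigoplus_{\mathbf{k}'>\mathbf{k}} R_\mu^{\mathbf{k}'}$, and $\Fil^k_\otimes R_\mu = \bigoplus_{|\mathbf{k}'|\geq k} R_\mu^{\mathbf{k}'}$, so that $W_{\mathbf{k}}\cong R_\mu^{\mathbf{k}}$ and $\gr^k_\otimes R_\mu \cong \bigoplus_{|\mathbf{k}|=k} R_\mu^{\mathbf{k}}$; the map $\phi$ is then visibly the identity. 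Equivalently, one can bypass the splitting and argue by dimension count, since surjectivity of $\phi$ together with the equality of dimensions (verified after splittings) forces $\phi$ to be an isomorphism.

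I do not expect any real obstacle: the result is a formal consequence of the tensor product structure of $R_\mu$ furnished by Theorem \ref{thm:projenv} together with the definitions of the multifiltration and the tensor filtration. The only thing to keep track of is the bookkeeping between the partial order on $\{0,1,2\}^f$ and the total order given by $|\cdot|$, which is taken care of by the observation $\mathbf{k}'>\mathbf{k}\Rightarrow |\mathbf{k}'|\geq |\mathbf{k}|+1$.
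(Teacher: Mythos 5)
Your proof is correct and is exactly the standard argument the paper is invoking when it says the proposition "follows from general facts about tensor products of filtered objects": the map is induced by the inclusions $\Fil^{\mathbf{k}}\subset\Fil^{k}_\otimes$ (using $\mathbf{k}'>\mathbf{k}\Rightarrow|\mathbf{k}'|\geq k+1$), and injectivity/surjectivity are checked after choosing filtration-compatible splittings of each $R_{\mu_i}$, the $\Gamma$-equivariance being automatic since the map itself is defined without the splittings. So you have simply written out the details the paper leaves implicit; no difference in approach and no gap.
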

\begin{proof}
This follows from general facts about tensor products of filtered objects.
\end{proof}
To describe the representations $W_{\mathbf{k}}$, we will need the following translation principles.   

\begin{prop}\label{prop:translation}
Let $\lambda-\eta,\ \omega\in X^*(T)$ be dominant weights in alcove $C_0$. 
Assume that $L(\omega)|_T$ is multiplicity free and that for all weights $\nu$ of $L(\omega)|_T$, $\lambda-\eta+\nu$ is still in alcove $C_0$.
Then we have an isomorphism
\begin{enumerate}
\item $F(\lambda-\eta) \otimes F(\omega) \cong \oplus_{\nu \in\JH(L(\omega)|_T)} F(\lambda-\eta+\nu).$
\end{enumerate}
Assume moreover that $\langle \lambda-\eta,\alpha^\vee\rangle>0$ for at least one positive coroot $\alpha^\vee$.
Then we have an isomorphism
\begin{enumerate}\setcounter{enumi}{1}
\item $R_{\lambda} \otimes F(\omega) \cong \oplus_{\nu \in\JH(L(\omega)|_T)}  P_{F(\lambda-\eta +\nu)}$
\end{enumerate}
where we have written $P_{F(\lambda-\eta +\nu)}$ to denote a projective envelope of the Serre weight $F(\lambda-\eta +\nu)$. (In particular, $P_{F(\lambda-\eta +\nu)}\cong R_{\lambda +\nu}$ if $F(\lambda-\eta +\nu)$ is not a character.)
\end{prop}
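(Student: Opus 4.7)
The plan is to prove part~(1) by lifting the computation to the algebraic group $G$ and using the Weyl filtration together with the linkage principle, and then to derive part~(2) from (1) by observing that projective $\Gamma$-modules are determined up to isomorphism by their cosocles.

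For part~(1), since $\lambda-\eta$ and $\omega$ lie in $C_0$ we have $V(\lambda-\eta)=L(\lambda-\eta)$ and $V(\omega)=L(\omega)$ (simples coincide with Weyl modules in the lowest alcove). The tensor product $V(\lambda-\eta)\otimes V(\omega)$ admits a Weyl filtration by a standard result (see e.g.\ \cite{JantzenBook}). Our hypothesis that $\lambda-\eta+\nu\in C_0$ for every weight $\nu$ of $L(\omega)$ keeps $\lambda-\eta+\nu$ sufficiently far from walls that the standard character calculation identifies the subquotients of the filtration with the $V(\lambda-\eta+\nu)$, each appearing once by the multiplicity-free assumption on $L(\omega)|_T$. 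Each $V(\lambda-\eta+\nu)=L(\lambda-\eta+\nu)$ is simple and lies in $C_0$, so the linkage principle yields $\Ext^1_G$-vanishing between distinct summands and the Weyl filtration splits. Restricting to $\Gamma=G(\F_p)$ gives~(1), since each $L(\mu)|_\Gamma=F(\mu)$ remains irreducible in this generic regime.

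For part~(2), $R_\lambda\otimes F(\omega)$ is projective as a $\Gamma$-module, since tensoring a projective $\F\Gamma$-module with a finite-dimensional module preserves projectivity (via the untwist isomorphism $\F\Gamma\otimes_\F V\cong \F\Gamma\otimes_\F V_{\mathrm{triv}}$ of $\F\Gamma$-modules obtained from $g\otimes v\mapsto g\otimes g^{-1}v$). Tensoring the short exact sequence $0\to\rad R_\lambda\to R_\lambda\to F(\lambda-\eta)\to 0$ with $F(\omega)$ produces a surjection $R_\lambda\otimes F(\omega)\twoheadrightarrow F(\lambda-\eta)\otimes F(\omega)=\bigoplus_\nu F(\lambda-\eta+\nu)$, where the identification uses part~(1). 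This exhibits $\bigoplus_\nu F(\lambda-\eta+\nu)$ as a semisimple quotient, so by projectivity $\bigoplus_\nu P_{F(\lambda-\eta+\nu)}$ is a direct summand of $R_\lambda\otimes F(\omega)$. A dimension count using the translation principle (which in the generic range considered here makes $\dim P_{F(\lambda-\eta+\nu)}$ constant in $\nu$ and equal to $\dim R_\lambda$) forces equality, since $\sum_\nu\dim P_{F(\lambda-\eta+\nu)}=\dim L(\omega)\cdot\dim R_\lambda=\dim(R_\lambda\otimes F(\omega))$.

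The main obstacle is the dimension count: one must justify that $\dim P_{F(\lambda-\eta+\nu)}=\dim R_\lambda$ for all $\nu$ in the relevant range, paying particular attention to the edge case from Theorem~\ref{thm:projenv} where $F(\lambda-\eta+\nu)$ is a character and $R_{\lambda+\nu}$ decomposes as the projective envelope plus a twist of the Steinberg, so that $P_{F(\lambda-\eta+\nu)}$ must be distinguished from $R_{\lambda+\nu}$ and the Steinberg contribution separately accounted for.
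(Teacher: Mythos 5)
Your part (1) is fine and is essentially the same argument as the paper's in a different dress: the paper quotes Pillen's translation lemma for simple modules over $G^{\der}$ and restricts to $\Gamma$, while you reprove that $G$-module decomposition via a Weyl filtration, Brauer's character formula and the linkage principle; both are valid.

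Part (2), however, has a genuine gap, and it sits exactly at the step you yourself call ``the main obstacle'' and then leave unresolved. Your strategy (projectivity of $R_\lambda\otimes F(\omega)$, the surjection onto $F(\lambda-\eta)\otimes F(\omega)$, then a dimension count) needs $\dim P_{F(\lambda-\eta+\nu)}=\dim R_\lambda=(2p)^f$ for every $\nu$, and this is false precisely when some $F(\lambda-\eta+\nu)$ is a character, a case the hypotheses do not exclude: only $F(\lambda-\eta)$ itself is forced to be non-trivial by the condition $\langle\lambda-\eta,\alpha^\vee\rangle>0$. Concretely, take $f=1$, $p\geq 5$, $\lambda-\eta=(1,0)$, $\omega=(1,0)$: all hypotheses hold, the constituents in (1) are $F(2,0)$ and $F(1,1)=\det$, and $\dim\bigl(R_\lambda\otimes F(\omega)\bigr)=4p$ while $\dim P_{F(2,0)}+\dim P_{\det}=2p+p=3p$ (by Theorem \ref{thm:projenv}, the projective envelope of a character has dimension $(2p)^f-p^f$). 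The missing $p$ dimensions are a twist of the Steinberg representation, which really is a direct summand of $R_\lambda\otimes F(\omega)$ even though it is invisible in $F(\lambda-\eta)\otimes F(\omega)$; the cosocle of a tensor product is in general strictly larger than the tensor product of the cosocles, so your implicit identification of the cosocle of $R_\lambda\otimes F(\omega)$ with $\oplus_\nu F(\lambda-\eta+\nu)$ is also unjustified. Consequently no dimension count of the proposed kind can close the argument: what is true in all cases (and what the paper actually proves) is $R_\lambda\otimes F(\omega)\cong\oplus_\nu R_{\lambda+\nu}$, obtained by lifting to injective envelopes over the Frobenius kernel, applying Pillen's translation lemma there ($Q_r(\lambda)\otimes L(\omega)\cong\oplus_\nu Q_r(\lambda+\nu)$), and only then restricting to $\Gamma$; this coincides with $\oplus_\nu P_{F(\lambda-\eta+\nu)}$ exactly when no constituent is a character. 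To repair your proof you must either rule out the character constituents (extra hypothesis, as in the applications where genericity is available) or replace the dimension count by an argument at the level of $G_rT$-injective hulls, i.e.\ essentially the paper's route; as written, the proof is incomplete.
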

\begin{remark}
In the statement of the Proposition \ref{prop:translation} assume that $\langle \omega,\alpha^{\vee}\rangle \leq 1$ for all positive coroots $\alpha^{\vee}$. It is then easy to check that the proposition applies with $\omega = \varepsilon'_i$ for all $i$ as soon as $\lambda-\eta$ is $1$-deep.
\end{remark}
\begin{proof}
We prove the analogous results for $G^{\der}$.
By \cite[Lemma 5.1(i)]{Pil} we have a $G^{\der}$-decomposition $L(\lambda) \otimes L(\omega) \cong \oplus_{\nu \in\JH(L(\omega)|_T)} L(\lambda+\nu)$ and the first statement for $G^{\der}$ follows by restriction to the finite group $G^{\der}(\F_p)$.

As for the second statement, we need to recall some standard facts about injective envelopes of Frobenius kernels.
Let $T_{\SL_2}$ be the standard torus of ${\SL_2}_{/\F_p}$. 
For any $r\geq 1$ we let $(\SL_2)_r$ denote the $r$-th Frobenius kernel of $\SL_2$ and, for any weight $\lambda\in X_r(T_{\SL_2})$ we write $Q_r(\lambda)$ for the injective envelope of $L(\lambda)|_{(\SL_2)_rT_{\SL_2}}$. 
Under our assumption on $p$ the $(\SL_2)_rT_{\SL_2}$-module $Q_r(\lambda)$ has a unique $\SL_2$-module structure, as well as a $\SL_2$-equivariant decomposition:
\begin{equation}
\label{eq:dec:inj}
Q_r(\lambda)\cong \otimes_{i=0}^{r-1} Q_1(\lambda_i)^{(i)}
\end{equation}
if $\lambda$ decomposes as $\lambda=\sum_{i=0}^{r-1}p^i\lambda_i$ with each $\lambda_i\in X^*(T_{\SL_2})$ being $p$-restricted.

Assume now that $\omega\in X_r(T_{\SL_2})$ is such that $L(\omega)$ is multiplicity free and $\lambda+\nu$ lies in the same alcove as $\lambda$ for any weight $\nu \in\JH(L(\omega)|_T)$. By \cite[Lemma 5.1(ii)]{Pil} and \ref{eq:dec:inj} we have a decomposition
\begin{equation}
\label{eq:trans:inj}
Q_r(\lambda)\otimes L(\omega)\cong \oplus_{\nu \in\JH(L(\omega)|_T)} Q_r(\lambda+\nu)\cong 
\oplus_{\nu \in\JH(L(\omega)|_T)}\otimes_{i=0}^{r-1} Q_1(\lambda_i+\nu_i)^{(i)}
\end{equation}
where we have written $\nu=\sum_{i=0}^{r-1}p^i\nu_i$ with $\nu_i\in X_1(T_{\SL_2})$ for all $\nu \in\JH(L(\omega)|_T)$.

The second statement of the Proposition  for $G^{\der}$ follows now from \ref{eq:trans:inj}.

The statements for $G$ are now deduced from the previous results on $G^{\der}$ by a formal argument, cf.  for instance \cite{LLLM2}, Theorem 4.1.3.
 \end{proof}

From now on 
we assume that $\mu-\eta$ is $1$-deep. In particular $R_{\mu}$ is the projective envelope of the weight $F(\mu-\eta)$.


\begin{defin}
Let $S = \{\pm \varepsilon_i\}_i$, and $\mathfrak{J}$ be the set of subsets of $S$.
For $J\in \mathfrak{J}$ define $\omega_J := \sum_{\omega \in J} \omega\in \Lambda_W$ and  $\sigma_J := F(\mathfrak{t}_\mu(\omega_J))$. 
Finally, let $\mathbf{k}(J) := (k_i(J))_i\in\{0,1,2\}^f$ where $k_{i+1}(J) := \#\{\pm \varepsilon_i\} \cap J$, and let $k(J) := \# J = |\mathbf{k}(J)|$.
\end{defin}

The following key multiplicity one result allows one to give a reasonable description of the submodule structure of generic $\Gamma$-projective envelopes.

\begin{prop}\label{prop:Wdecomp}
Let $\mathbf{k}\in\{0,1,2\}^f$. Then
$W_{\mathbf{k}} \cong \underset{J\in \mathfrak{J},\ \mathbf{k}(J) = \mathbf{k}}{\bigoplus} \sigma_J$. Moreover, this sum is multiplicity free.
\end{prop}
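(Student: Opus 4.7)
The plan is to use the tensor product description of $W_{\mathbf{k}}$ that comes from the multifiltration, combined with the translation principle (Proposition \ref{prop:translation}(1)), to decompose $W_{\mathbf{k}}$ into Serre weights, and then match these explicitly with the $\sigma_J$'s.

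First I would write
\[ W_{\mathbf{k}} \;=\; \bigotimes_{i=0}^{f-1} \bigl(\gr^{k_{i+1}} R_{\mu_i}\bigr)^{(i)} \]
using the tensor structure of $R_\mu$. By Theorem \ref{thm:projenv}, for $k_{i+1} \in \{0,2\}$ the factor $\gr^{k_{i+1}} R_{\mu_i}^{(i)}$ is already the single Serre weight $F(\mu_i-\omega_0)^{(i)}$, whereas for $k_{i+1}=1$ it is isomorphic to $F(w_0 t_{-\omega^{(i)}}\cdot(\mu_i-\omega_0))^{(i)} \otimes F(\omega_0)^{(i+1)}$. Tensoring with $F(\omega_0)^{(i+1)}$ is the same as tensoring with the Frobenius-twisted representation whose restriction to $T$ is multiplicity free with weights $\pm \omega^{(i+1)}$, so the $1$-deepness hypothesis on $\mu-\eta$ guarantees (via the remark after Proposition \ref{prop:translation}) that Proposition \ref{prop:translation}(1) applies and gives a splitting of each such factor into two Serre weights, indexed by a sign $\epsilon_i \in \{+,-\}$.

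Second, I would collect these decompositions slot-by-slot. Distributing the tensor product over the direct sum yields
\[ W_{\mathbf{k}} \;=\; \bigoplus_{\epsilon} F(\chi_\epsilon), \]
where $\epsilon$ ranges over sign choices at the indices $i$ with $k_{i+1}=1$, and $\chi_\epsilon$ is the sum of the weights contributed by each factor. The number of summands is $2^{m(\mathbf{k})}$ with $m(\mathbf{k})=\#\{i:k_{i+1}=1\}$, which equals $\#\{J\in\mathfrak{J} : \mathbf{k}(J)=\mathbf{k}\}$, since each such $J$ is determined by a choice of $\{+\omega^{(i)}\}$ or $\{-\omega^{(i)}\}$ for every $i$ with $k_{i+1}=1$ (the slots with $k_{i+1}=0$ or $2$ being forced). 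To the sign sequence $\epsilon$ I associate the subset $J_\epsilon \in \mathfrak{J}$ defined by $J_\epsilon \cap \{\pm\omega^{(i)}\}=\emptyset$ if $k_{i+1}=0$, $=\{\pm\omega^{(i)}\}$ if $k_{i+1}=2$, and $=\{\epsilon_i \omega^{(i)}\}$ if $k_{i+1}=1$. By construction $\mathbf{k}(J_\epsilon)=\mathbf{k}$ and the map $\epsilon\mapsto J_\epsilon$ is a bijection onto $\{J:\mathbf{k}(J)=\mathbf{k}\}$.

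Third, I would verify $F(\chi_\epsilon) \cong \sigma_{J_\epsilon}$ by direct character computation. Unwinding the formula for $\widetilde{w}_J\cdot(\mu+\omega_J-\eta)$ in terms of the $p$-dot action, the translated contribution from each slot is, up to elements of $(p-\pi)X^0(T)$, precisely the character produced in the tensor decomposition above: the slots with $k_{i+1}\in\{0,2\}$ contribute $p^i(\mu_i-\omega_0)$ (with the extra translation by $\omega_J$ absorbed through $\widetilde{w}_J$), and the slots with $k_{i+1}=1$ contribute the signed weight picked out by $\epsilon_i$. This is the main obstacle of the argument, since it requires bookkeeping of how $w_J$ and $t_{-\pi^{-1}\omega_J}$ interact with the $p$-dot action and with the intrinsic twist $w_0 t_{-\omega^{(i)}}$ appearing in $\gr^1 R_{\mu_i}$; it amounts however to the same computation that underlies the symmetry statement in Proposition \ref{prop:sym}, so by applying that proposition (after suitably reducing to the case $\omega=0$) I can avoid redoing the calculation from scratch.

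Finally, the multiplicity-freeness of the sum follows from Proposition \ref{prop:inj}: the $\omega_J$ for distinct $J$ with $\mathbf{k}(J)=\mathbf{k}$ are distinct elements of $\Lambda_W^\mu$ (genericity guarantees they lie in $\Lambda_W^\mu$), so their images under the injective map $\mathfrak{t}_\mu$ give pairwise distinct Serre weights $\sigma_J$.
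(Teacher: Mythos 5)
Your proposal is correct and follows essentially the same route as the paper: decompose $W_{\mathbf{k}}$ as a tensor product of graded pieces via Theorem \ref{thm:projenv}(2), split the factors at indices with $k_{i+1}=1$ using the translation principle (Proposition \ref{prop:translation}), identify the resulting constituents with the $\sigma_J$, and deduce multiplicity-freeness from the injectivity of $\mathfrak{t}_\mu$ (Proposition \ref{prop:inj}). The only (cosmetic) difference is in the last identification step: rather than your explicit sign-by-sign matching with its appeal to Proposition \ref{prop:sym}, the paper just computes $\mathfrak{t}_\mu(\omega_J)$ slot by slot to see that each $\sigma_J$ with $\mathbf{k}(J)=\mathbf{k}$ embeds into $W_{\mathbf{k}}$, and then concludes by counting, since $W_{\mathbf{k}}$ is semisimple of length $2^\delta$ and the $2^\delta$ weights $\sigma_J$ are pairwise distinct.
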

\begin{proof}
By definition and Theorem \ref{thm:projenv}(2) we have $W_{\mathbf{k}} \cong \otimes_i (F(\lambda_i-\varepsilon'_0)\otimes F(\nu_i))^{(i)}$ where $\lambda_i - \varepsilon'_0 = w_0 t_{-\varepsilon'_0} \cdot (\mu_i - \varepsilon'_0)$ if $k_{i+1} = 1$ and $\lambda_{i} - \varepsilon'_0 =\mu_i-\varepsilon'_0$ otherwise, and $\nu_i = \varepsilon'_0$ if $k_i = 1$ and $\nu_i =0$ otherwise. Note that $\lambda_i - \varepsilon'_0$ is $n$-deep in its alcove $C_0$ if and only if $\mu_i-\varepsilon'_0$ is $n$-deep in its alcove.
By Proposition \ref{prop:translation}, $F(\lambda_i-\varepsilon'_0)\otimes F(\varepsilon'_0) \cong F(\lambda_i) \oplus F(\lambda_i+(-1,1))$.
In particular, $W_{\mathbf{k}}$ is semisimple and of length $2^\delta$, where $\delta = \#\{i:k_i = 1\}$.

Suppose that $J \in \mathfrak{J}$ such that $\mathbf{k}(J) = \mathbf{k}$.
Then $\mathfrak{t}_\mu(\omega_J)_i = (\lambda_i-\varepsilon'_0+\omega'_i)$ where $\omega'_i = 0$ if $k_i\neq 1$ and is $\varepsilon'_0$ or $w_0\varepsilon'_0$ otherwise.
By the last paragraph, there is an inclusion $\sigma_J \inj W_{\mathbf{k}}$.
One easily checks that $\#\{J\in\mathfrak{J}:\mathbf{k}(J) = \mathbf{k}\}=\#\{\omega_J:\mathbf{k}(J) = \mathbf{k}\}=2^\delta$.
Since $\#\{\omega_J:\mathbf{k}(J) = \mathbf{k}\} = \#\{\sigma_J:\mathbf{k}(J) = \mathbf{k}\}$ by Proposition \ref{prop:inj} and $W_{\mathbf{k}}$ is semisimple and of length $2^\delta$, we are done.
\end{proof}

By abuse of notation, $\sigma_J$ will often denote the $\sigma_J$-isotypic component of $W_{\mathbf{k}(J)}$, which is isomorphic to $\sigma_J$ by Proposition \ref{prop:Wdecomp}.

In what follows we fix $\mathbf{k} \in \{0,1,2\}^f$ and let $k = |\mathbf{k}|$.
Let
\[W_{\mathbf{k},\mathbf{k}+1}:=\Fil^{\mathbf{k}} R_\mu/\big(\Fil^{k+2}_{\otimes} R_\mu \cap \Fil^{\mathbf{k}} R_\mu \big) \subset \Fil^k_{\otimes} R_\mu/\Fil^{k+2}_{\otimes} R_\mu.\]

The module $W_{\mathbf{k},\mathbf{k}+1}$ 
is endowed with the induced filtration from $\Fil^k_{\otimes} R_\mu/\Fil^{k+2}_{\otimes}$.
This is a two step filtration with associated graded pieces described as follows.
We have $\gr^k W_{\mathbf{k},\mathbf{k}+1} = W_{\mathbf{k}}$ and $\gr^{k+1} W_{\mathbf{k},\mathbf{k}+1} = \oplus_{\mathbf{k}'} W_{\mathbf{k}'}$ where the direct sum ranges over the elements $\mathbf{k}'\in\{0,1,2\}^f$ satisfying $\mathbf{k} \leq \mathbf{k}'$ and $k'-k = 1$.
We have the following refinement of Proposition \ref{prop:Wdecomp}.

\begin{lem}\label{lem:Wdecomp}
Keep the previous hypotheses and notation. 
The graded piece
\[\gr^{k+1} W_{\mathbf{k},\mathbf{k}+1}\subset \gr^{k+1}_{\otimes}R_\mu\]
is multiplicity free.
\end{lem}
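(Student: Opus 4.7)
The plan is to unpack the description of $\gr^{k+1} W_{\mathbf{k},\mathbf{k}+1}$ already given in the paragraph before the lemma and then check that the various Jordan--Hölder constituents appearing are pairwise distinct. By construction,
\[\gr^{k+1} W_{\mathbf{k},\mathbf{k}+1} \;=\; \bigoplus_{\mathbf{k}'} W_{\mathbf{k}'},\]
the sum running over $\mathbf{k}' \in \{0,1,2\}^f$ with $\mathbf{k}' \geq \mathbf{k}$ and $|\mathbf{k}'| = k+1$. Each such $\mathbf{k}'$ differs from $\mathbf{k}$ in exactly one coordinate $i^* = i^*(\mathbf{k}')$, where $k'_{i^*} = k_{i^*}+1 \in \{1,2\}$. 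By Proposition~\ref{prop:Wdecomp}, each summand $W_{\mathbf{k}'}$ is already multiplicity free, so the only thing to check is that for two distinct such $\mathbf{k}'_1, \mathbf{k}'_2$, the sets of constituents of $W_{\mathbf{k}'_1}$ and $W_{\mathbf{k}'_2}$ are disjoint.

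So suppose $J_1, J_2 \in \mathfrak{J}$ with $\mathbf{k}(J_a) = \mathbf{k}'_a$ and $\sigma_{J_1} \cong \sigma_{J_2}$. By the injectivity of $\mathfrak{t}_\mu$ (Proposition~\ref{prop:inj}) this forces $\omega_{J_1} = \omega_{J_2}$ in $\Lambda_W$. I reduce the problem to a coordinatewise analysis: by the definition of $\omega_J$, the coefficient of $\omega^{(j)}$ in $\omega_J$ is nonzero (equal to $\pm 1$) if and only if $k_{j+1}(J) = 1$, and is zero if $k_{j+1}(J) \in \{0,2\}$. The key observation is therefore that the support (in $\Lambda_W$) of $\omega_J$ detects which coordinates of $\mathbf{k}(J)$ equal $1$.

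Write $i^*_a = i^*(\mathbf{k}'_a)$ and suppose $\mathbf{k}'_1 \neq \mathbf{k}'_2$, so $i^*_1 \neq i^*_2$. Inspect the $(i^*_1-1)$-th coordinate. For $J_1$ the relevant entry is $k_{i^*_1}(J_1) = k_{i^*_1}+1$, while for $J_2$ it is $k_{i^*_1}(J_2) = k_{i^*_1}$ (since $i^*_1 \neq i^*_2$). These two values differ by exactly $1$, and since $k_{i^*_1} \in \{0,1\}$ one of them equals $1$ while the other lies in $\{0,2\}$. By the key observation, the $(i^*_1-1)$-th coordinate of $\omega_{J_1}$ is nonzero exactly when that of $\omega_{J_2}$ is zero, and vice versa; in particular $\omega_{J_1} \neq \omega_{J_2}$, a contradiction. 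Hence no constituent is shared across distinct $\mathbf{k}'$, which combined with Proposition~\ref{prop:Wdecomp} proves the claim.

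The only step that requires any work is the coordinate comparison above, but it is a routine bookkeeping once one notices that the support of $\omega_J$ literally records the set $\{j : k_{j+1}(J)=1\}$; the combinatorics of $\mathfrak{J}$ and the injectivity of $\mathfrak{t}_\mu$ do the rest.
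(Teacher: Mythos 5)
Your proof is correct and follows essentially the same route as the paper: decompose $\gr^{k+1}W_{\mathbf{k},\mathbf{k}+1}$ into the $W_{\mathbf{k}'}$, invoke Proposition~\ref{prop:Wdecomp} and the injectivity of $\mathfrak{t}_\mu$ (Proposition~\ref{prop:inj}), and then compare the single coordinate where the two candidate $\mathbf{k}'$'s differ, using that the support of $\omega_J$ records exactly the coordinates of $\mathbf{k}(J)$ equal to $1$. This matches the paper's argument, with your key observation making explicit the step the paper leaves implicit.
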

\begin{proof}
Suppose that $\sigma\in \JH(\gr^{k+1} W_{\mathbf{k},\mathbf{k}+1})$ is a constituent appearing with multiplicity. By Proposition \ref{prop:Wdecomp}, we deduce the existence of $J_1, J_2\in \mathfrak{J}$ with $\mathbf{k}(J_1)\neq \mathbf{k}(J_2)$, $\sigma_{J_1}\cong\sigma\cong\sigma_{J_2}$, and $\mathbf{k}(J_1),\ \mathbf{k}(J_2)$ are of the form $\mathbf{k}'$ above. In what follows, we write $(k_{1,i})_i= \mathbf{k}_1:=\mathbf{k}(J_1)$ and similarly  $\mathbf{k}_2:=\mathbf{k}(J_2)$.
Let $j_1, j_2\in\{0,\dots, f-1\}$ be the unique elements such that $k_{j_1+1} +1 = k_{1,j_1+1}$ and $k_{j_2+1} +1 = k_{2,j_2+1}$.
Then $j_1 \neq j_2$, and hence $k_{j_2+1} +1 = k_{1,j_1+1}$, from which we see that the $j_1$ component of $\omega_{J_1}$ and $\omega_{J_2}$ must differ.
By Proposition \ref{prop:inj}, we conclude that $\sigma_{J_1}\not\cong\sigma_{J_2}$, a contradiction.
\end{proof}

Let now $\mathbf{k}'\in \{0,1,2\}^f$ be as above and let $j\in \Z/f$ be such that $k_{i+1} = k'_{i+1}$ for $i\neq j$ and $k_{j+1} +1 = k'_{j+1}$.  We define 
\[W_{\mathbf{k},\mathbf{k}'}:=\otimes_{i\neq j} \gr^{k_{i+1}} R_{\mu_i}^{(i)} \otimes (\Fil^{k_{j+1}}R_{\mu_j}/\Fil^{k_{j+1}+2}R_{\mu_j})^{(j)},\]
which is a quotient of $W_{\mathbf{k},\mathbf{k}+1}$.
We endow $W_{\mathbf{k},\mathbf{k}'}$ with the induced quotient filtration from $W_{\mathbf{k},\mathbf{k}+1}$; it is a two step filtration with graded pieces $\gr^k W_{\mathbf{k},\mathbf{k}'} = W_{\mathbf{k}}$ and $\gr^{k+1} W_{\mathbf{k},\mathbf{k}'} = W_{\mathbf{k}'}$.


\begin{prop}\label{prop:ext}
Suppose that $J\subset J'$ and $\#J'\setminus J = 1$.
Let $\mathbf{k} = \mathbf{k}(J)$ and $\mathbf{k}' = \mathbf{k}(J')$.
Then there is a subquotient of $W_{\mathbf{k},\mathbf{k}'}$ which is the unique up to isomorphism nontrivial extension of $\sigma_J$ by $\sigma_{J'}$.

\end{prop}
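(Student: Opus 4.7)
Write $J'\setminus J=\{\epsilon\omega^{(j)}\}$ for the unique pair $(j,\epsilon)\in\Z/f\times\{+,-\}$; then $\mathbf{k}'$ and $\mathbf{k}$ differ only in the $(j+1)$-st coordinate, where $k'_{j+1}=k_{j+1}+1\in\{1,2\}$. The plan is to realize the desired extension as a subquotient of the tensor factorization
\[
W_{\mathbf{k},\mathbf{k}'}=\Bigl(\bigotimes_{i\neq j}\gr^{k_{i+1}}R_{\mu_i}^{(i)}\Bigr)\otimes E^{(j)},\qquad E:=\Fil^{k_{j+1}}R_{\mu_j}/\Fil^{k_{j+1}+2}R_{\mu_j},
\]
by isolating a non-split length-two subquotient inside $E^{(j)}$ and tensoring with an appropriate Serre-weight summand of the remaining factors.

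First I would establish that $E$ is an indecomposable length-three module. By Theorem~\ref{thm:projenv} together with Proposition~\ref{prop:translation}(1), $\gr^1 R_{\mu_j}$ decomposes as $F_1\oplus F_2$ for two distinct Serre weights, one for each sign $\pm\omega^{(j)}$, while $\gr^0 R_{\mu_j}\cong\gr^2 R_{\mu_j}\cong F(\mu_j-\omega_0)$. Since $R_{\mu_j}$ is the projective envelope of $F(\mu_j-\omega_0)$, both its socle and its cosocle are simple and equal to $F(\mu_j-\omega_0)$, with the socle contained in the radical. It follows that $E$ inherits simple cosocle when $k_{j+1}=0$ (as $E=R_{\mu_j}/\operatorname{soc}R_{\mu_j}$) and simple socle when $k_{j+1}=1$ (as $E=\Fil^1 R_{\mu_j}\supseteq\operatorname{soc}R_{\mu_j}$); in either case $E$ is indecomposable.

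Next I would extract the desired subquotient of $E$. For each $i\in\{1,2\}$, set $X_i:=E/F_{3-i}$ when $k_{j+1}=0$, and let $X_i\subset E$ be the preimage of $F_i\subset E/\operatorname{soc}E\cong F_1\oplus F_2$ when $k_{j+1}=1$. In either case $X_i$ has length two with composition factors $F(\mu_j-\omega_0)$ and $F_i$, and inherits the simple cosocle (resp.\ socle) of $E$; being indecomposable of length two with two distinct simple factors, $X_i$ is the unique non-split extension in the appropriate order.

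Finally, by Proposition~\ref{prop:translation}(1), $\bigotimes_{i\neq j}\gr^{k_{i+1}}R_{\mu_i}^{(i)}$ decomposes as a direct sum of Serre weights; I would pick the summand $\tau$ and the sign $i$ matching $\epsilon$ so that, via Steinberg's tensor product theorem and the definition of $\mathfrak{t}_\mu$, the composition factors of $\tau\otimes X_i^{(j)}$, in their roles as cosocle and socle, are exactly $\sigma_J$ and $\sigma_{J'}$ (this identification reduces to a bookkeeping check on the $j$-th coordinate). The subquotient $\tau\otimes X_i^{(j)}\subset W_{\mathbf{k},\mathbf{k}'}$ is then an extension of $\sigma_J$ by $\sigma_{J'}$, which is unique as a non-trivial extension by $\dim\Ext^1_\Gamma(\sigma_J,\sigma_{J'})\leq 1$ from Proposition~\ref{prop:extgraph}. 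The main obstacle is to verify that $\tau\otimes X_i^{(j)}$ is non-split; my plan is to argue that the simple cosocle (resp.\ socle) of $X_i$ promotes to a simple cosocle (resp.\ socle) of $\tau\otimes X_i^{(j)}$, exploiting that the $\Gamma$-actions on $\tau$ and on $X_i^{(j)}$ come from complementary collections of Frobenius twists, so that Steinberg's theorem identifies the head and socle of the tensor product with the tensor products of heads and socles. Indecomposability of a length-two module with two distinct simple factors then forces it to be the non-split extension.
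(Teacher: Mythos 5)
The structural claim on which your construction rests fails for $f>1$: the middle layer $\gr^1 R_{\mu_j}$ does \emph{not} decompose as a sum $F_1\oplus F_2$ of two Serre weights indexed by the signs $\pm\omega^{(j)}$. By Theorem \ref{thm:projenv}(2) it is $F(w_0t_{-\omega^{(j)}}\cdot(\mu_j-\omega_0))\otimes F(\omega_0)^{(1)}$, and since the factor $F(\omega_0)^{(1)}$ sits at the \emph{next} embedding, this is a single irreducible $\Gamma$-representation by the Steinberg tensor product theorem. The splitting into the two weights that record the choice of sign happens only after this Frobenius-twisted factor is paired with $F(\lambda_{j+1}-\omega_0)$ coming from the $(j+1)$-st tensor factor of $R_\mu$, via Proposition \ref{prop:translation}(1) applied at embedding $j+1$ --- this is exactly how $W_{\mathbf{k}}$ is decomposed in the proof of Proposition \ref{prop:Wdecomp}. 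Consequently the length-two modules $X_i$ you want inside (or as quotients of) $E=\Fil^{k_{j+1}}R_{\mu_j}/\Fil^{k_{j+1}+2}R_{\mu_j}$ do not exist, and no subquotient of the form $\tau\otimes X^{(j)}$ with $X$ of length two in $E$ can have $\sigma_J$ and $\sigma_{J'}$ as its constituents: the data distinguishing $+\omega^{(j)}$ from $-\omega^{(j)}$ straddles embeddings $j$ and $j+1$, so it cannot be isolated in the $j$-th factor alone. For the same reason the two tensor factors in your factorization are not supported on complementary sets of embeddings (when $k_j=1$ the complementary factor even contributes an $F(\omega_0)$ at embedding $j$ that must interact with $E^{(j)}$ by translation), and note also that $R_{\mu_j}$ is not a projective $\Gamma$-module for $f>1$ (only the full product $R_\mu$ is), so the simple socle/cosocle claims for $E$ are unsupported as stated. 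Your picture is accurate only in the degenerate case $f=1$, where the Frobenius twist is trivial.

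A second, independent gap is the non-splitness step: for modules over the finite group $\Gamma$ there is no general principle that the cosocle (or socle) of a tensor product is the tensor product of the cosocles (socles); Steinberg's theorem concerns irreducible representations only. The paper's proof avoids both problems by working with all embeddings at once: when $k_{j+1}=0$ it exhibits the surjection $\otimes_i(R_{\lambda_i}\otimes F(\nu_i))^{(i)}\surj W_{\mathbf{k},\mathbf{k}'}$ and uses the translation principle, Proposition \ref{prop:translation}(2), to recognize the source as a projective envelope of the semisimple module $W_{\mathbf{k}}$, whence $\cosoc(W_{\mathbf{k},\mathbf{k}'})\cong W_{\mathbf{k}}$ (dually, $\mathrm{soc}(W_{\mathbf{k},\mathbf{k}'})\cong W_{\mathbf{k}'}$ when $k_{j+1}=1$); it then passes from this statement to the single nontrivial extension of $\sigma_J$ by $\sigma_{J'}$ through the $\Ext^1$ computation of Proposition \ref{prop:extgraph}, rather than by constructing the length-two subquotient explicitly. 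Any repair of your strategy would have to treat at least the $j$-th and $(j+1)$-st factors together and still identify the relevant head or socle, which in effect reproduces the paper's argument.
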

\begin{proof}
Suppose that $J' \setminus J \subset \{\pm \varepsilon_j\}$ and that $k_{j+1} = 0$ (resp. $k_{j+1} = 1$).
It suffices to show that $\sigma_{J'}$ (resp. $\sigma_J$) is not in the cosocle (resp. the socle) of $W_{\mathbf{k},\mathbf{k}'}$.
Indeed, this would show that the image of the extension $W_{\mathbf{k},\mathbf{k}'}$ under the map (canonically defined up to scalar) $\Ext^1_\Gamma(W_{\mathbf{k}},W_{\mathbf{k'}}) \rarrow \Ext^1_\Gamma(W_{\mathbf{k}},\sigma_{J'})$ (resp. $\Ext^1_\Gamma(W_{\mathbf{k}},W_{\mathbf{k'}}) \rarrow \Ext^1_\Gamma(\sigma_J,W_{\mathbf{k}'})$) is nonzero.
Since by Proposition \ref{prop:extgraph}, the map (canonically defined up to scalar) $\Ext^1_\Gamma(W_{\mathbf{k}},\sigma_{J'}) \rarrow \Ext^1_\Gamma(\sigma_J,\sigma_{J'})$ (resp. $\Ext^1_\Gamma(\sigma_J,W_{\mathbf{k}'}) \rarrow \Ext^1_\Gamma(\sigma_J,\sigma_{J'}))$ is an isomorphism, we would be done.
We show the following: if $k_{j+1} = 0$ (resp. $k_{j+1} =1$), then the cosocle (resp. the socle) of $W_{\mathbf{k},\mathbf{k}'}$ is isomorphic to $W_{\mathbf{k}}$ (resp. $W_{\mathbf{k}'}$).

Assume that $k_{j+1} = 0$.
We freely use the notation in the proof of Proposition \ref{prop:Wdecomp}.
Recall that $W_{\mathbf{k}} \cong \otimes_i (F(\lambda_i-\varepsilon'_0)\otimes F(\nu_i))^{(i)}$, which is semisimple.
There is a surjection $\otimes_i (R_{\lambda_i}\otimes F(\nu_i))^{(i)} \surj W_{\mathbf{k},\mathbf{k}'}$.
Noting that $\lambda_i-\varepsilon'_0$ is $1$-deep for all $i$ we see that Proposition \ref{prop:translation} applies and hence the latter surjection is actually the projective envelope of the semisimple representation $W_{\mathbf{k}}$.
We conclude that the cosocle of $W_{\mathbf{k},\mathbf{k}'}$ is $W_{\mathbf{k}}$, as desired. 

If $k_{j+1} = 1$, one makes the dual argument using the injection $W_{\mathbf{k}'} \inj \otimes_i (R_{\lambda_i}\otimes F(\nu_i))^{(i)}$.
\end{proof}

Fix $J\in \mathfrak{J}$.
Recall that by Proposition \ref{prop:Wdecomp}, there is a unique submodule of $W_{\mathbf{k}(J)}\subset R_\mu/\Fil^{>\mathbf{k}(J)} R_\mu$ isomorphic to $\sigma_J$.
Let us write $\mathbf{k}:=\mathbf{k}(J)$ and $k:=|\mathbf{k}|$ in what follows.
Let $P_{\sigma_J}$ be a projective envelope of $\sigma_J$.
Then $\Hom_\Gamma(P_{\sigma_J},\gr^{k+1}_\otimes R_\mu) \cong \Hom_\Gamma(\sigma_J,\gr^{k+1}_\otimes R_\mu) = 0$ by Proposition \ref{prop:Wdecomp} and the fact that $\sigma_J \cong \sigma_{J'}$ implies that $\omega_J = \omega_{J'}$ by Proposition \ref{prop:inj}, which implies that $\# J \equiv \# J' \mod 2$.
Then since $P_{\sigma_J}$ is projective, the natural map 
\[\Hom_\Gamma(P_{\sigma_J},\Fil^k_\otimes R_\mu/\Fil^{k+2}_\otimes R_\mu) \ra \Hom_\Gamma(P_{\sigma_J},\gr^k_\otimes R_\mu)\]
is an isomorphism between vector spaces of dimension $1$.
Thus a fixed morphism $P_{\sigma_J} \surj \sigma_J \subset W_{\mathbf{k}}\subset \gr^k_\otimes R_\mu$ (unique up to scalar), uniquely lifts to a morphism $\psibar_J:P_{\sigma_J} \ra \Fil^k_\otimes R_\mu/\Fil^{k+2}_\otimes R_\mu$.
Note that since $\sigma_J \subset W_{\mathbf{k}}$, we could also take a lift of $P_{\sigma_J} \surj \sigma_J \subset W_{\mathbf{k}}$ in $\Hom_\Gamma(P_{\sigma_J}, W_{\mathbf{k},\mathbf{k}+1})$, which must coincide with $\psibar_J$ by uniqueness.
We conclude that the image of $\psibar_J$ lies in $W_{\mathbf{k},\mathbf{k}+1}$.
Let $\overline{V}_J$ be the image of $\psibar_J$, which obtains a filtration from $W_{\mathbf{k}, \mathbf{k}+1}$.
The following describes the structure of $\overline{V}_J$.

\begin{prop}\label{prop:Vbardecomp}
We have that $\gr^k_\otimes \overline{V}_J = \sigma_J$ and $\gr^{k+1}_\otimes \overline{V}_J = \oplus_{J'} \sigma_{J'}$ where the sum runs over $J'$ such that $J\subset J'$ and $\# J' - \# J = 1$.
\end{prop}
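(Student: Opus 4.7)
The identification $\gr^k_\otimes \overline{V}_J = \sigma_J$ follows immediately from the construction: $\psibar_J$ was defined as a lift of the surjection $P_{\sigma_J}\surj \sigma_J\subset W_{\mathbf{k}(J)}$, so its image in $\gr^k_\otimes R_\mu$ is $\sigma_J$. For $\gr^{k+1}_\otimes \overline{V}_J$, the plan is to establish matching upper and lower bounds on its Jordan--H\"older content.

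For the upper bound, observe that $\gr^{k+1}_\otimes \overline{V}_J$ embeds in $\gr^{k+1} W_{\mathbf{k},\mathbf{k}+1}$, which is semisimple and multiplicity free by Lemma \ref{lem:Wdecomp}; it therefore decomposes as a direct sum of distinct $\sigma_{J''}$. For any such $\sigma_{J''}$, quotienting $\overline{V}_J$ by a complementary summand in $\gr^{k+1}_\otimes \overline{V}_J$ yields a quotient that is an extension of $\sigma_J$ by $\sigma_{J''}$; this quotient inherits indecomposability and cosocle $\sigma_J$ from $P_{\sigma_J}$, so the extension is non-split. By Proposition \ref{prop:extgraph}, $\omega_J$ and $\omega_{J''}$ are adjacent, meaning $\omega_{J''} - \omega_J \in \{\pm\omega^{(j)}\}$ for some $j$. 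Combined with $\mathbf{k}(J'')\geq\mathbf{k}(J)$ and $|J''| - |J| = 1$, a per-coordinate check (any disagreement between $J$ and $J''$ at an index $i$ with $k_{i+1}(J) = k_{i+1}(J'') = 1$ would contribute a $\pm 2\omega^{(i)}$ term to $\omega_{J''} - \omega_J$, incompatible with adjacency) forces $J\subset J''$.

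For the lower bound, fix $J'\supset J$ with $|J'|-|J| = 1$ and let $\mathbf{k}' = \mathbf{k}(J')$. The projection $W_{\mathbf{k},\mathbf{k}+1}\surj W_{\mathbf{k},\mathbf{k}'}$ carries $\overline{V}_J$ onto a submodule $\overline{V}_{J,\mathbf{k}'}\subseteq W_{\mathbf{k},\mathbf{k}'}$ which is a quotient of $P_{\sigma_J}$ surjecting onto $\sigma_J \subset W_\mathbf{k}$. Set $Y := \bigoplus_{J''\neq J',\,\mathbf{k}(J'')=\mathbf{k}'} \sigma_{J''}\subseteq W_{\mathbf{k}'}\subseteq W_{\mathbf{k},\mathbf{k}'}$ and let $E$ be the preimage of $\sigma_J$ in $W_{\mathbf{k},\mathbf{k}'}/Y$; by the argument of Proposition \ref{prop:ext}, $E$ is a non-split extension of $\sigma_J$ by $\sigma_{J'}$. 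The image of $\overline{V}_{J,\mathbf{k}'}$ in $W_{\mathbf{k},\mathbf{k}'}/Y$ lies in $E$ and surjects onto $\sigma_J$; since the only nonzero proper submodule of $E$ is $\sigma_{J'}$, which does not surject onto $\sigma_J$, the image must equal $E$. Hence $\sigma_{J'}$ is a Jordan--H\"older factor of $\overline{V}_{J,\mathbf{k}'}$ and, being non-isomorphic to $\sigma_J$ by Proposition \ref{prop:inj}, lies in $\gr^{k+1}_\otimes \overline{V}_{J,\mathbf{k}'} \subset W_{\mathbf{k}'}$. Since this submodule is the projection of $\gr^{k+1}_\otimes \overline{V}_J$ to $W_{\mathbf{k}'}$ and $\gr^{k+1} W_{\mathbf{k},\mathbf{k}+1}$ is multiplicity free, $\sigma_{J'}$ is a direct summand of $\gr^{k+1}_\otimes \overline{V}_J$. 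The main non-formal ingredient is the combinatorial matching in the upper bound; once that is settled, the rest is a mechanical chase through Proposition \ref{prop:ext} and Lemma \ref{lem:Wdecomp}.
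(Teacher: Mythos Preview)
Your proof is correct and follows essentially the same approach as the paper's: both establish $\gr^k_\otimes\overline{V}_J=\sigma_J$ from the construction, then match an upper bound (via Lemma~\ref{lem:Wdecomp}, the existence of a nonsplit length-two quotient, Proposition~\ref{prop:extgraph}, and the same per-coordinate combinatorial check forcing $J\subset J''$) against a lower bound coming from Proposition~\ref{prop:ext}. The only stylistic difference is in the lower bound: the paper invokes uniqueness of the lift $P_{\sigma_J}\to W_{\mathbf{k},\mathbf{k}'}$ to identify the projection of $\psibar_J$ with a map hitting the nonsplit extension, whereas you bypass this by observing that the image in $W_{\mathbf{k},\mathbf{k}'}/Y$ lands in the uniserial module $E$ and surjects onto $\sigma_J$, hence must equal $E$; the two arguments are equivalent.
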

\begin{proof}
Since $\overline{V}_J$ has irreducible cosocle isomorphic to $\sigma_J$ and $\sigma_J \subset \gr^k_\otimes \overline{V}_J$, $\gr^k_\otimes \overline{V}_J = \sigma_J$.
By Proposition \ref{prop:ext}, for every $J'$ as in the statement of the theorem there is a subquotient $\sigma_{J,J'}$ of $W_{\mathbf{k}, \mathbf{k}+1}$ which is a nontrivial extension of $\sigma_J$ by $\sigma_{J'}$. Consequently there exists a non zero map $\overline{\psi}_J':\,P_{\sigma_J}\rightarrow W_{\mathbf{k},\mathbf{k}'}$ whose image contains $\sigma_{J,J'}$.
By uniqueness, the composition of $\overline{\psi}_J$ with projection to $W_{\mathbf{k},\mathbf{k}'}$ is $\overline{\psi}_J'$, and therefore $\sigma_{J,J'}$ is a quotient of $\overline{V}_J$.
We see that $\oplus_{J'} \sigma_{J'} \subset \gr^{k+1}_\otimes\overline{V}_J$.

Since $\gr^{k+1}_\otimes W_{\mathbf{k}, \mathbf{k}+1}$ is multiplicity free by Lemma \ref{lem:Wdecomp}, it suffices to show that if $\sigma_{J'}\subset \gr^{k+1}_\otimes W_{\mathbf{k}, \mathbf{k}+1}$ is a Jordan--H\"older factor of $\gr^{k+1}_\otimes\overline{V}_J$ then $J'$ has the above form.
Since $\overline{V}_J$ has Loewy length two and cosocle isomorphic to $\sigma_J$, if $\sigma_{J'}$ is a Jordan--H\"older factor of $\gr^{k+1}_\otimes\overline{V}_J$, $\overline{V}_J$ must have as a quotient a nontrivial extension of $\sigma_J$ by $\sigma_{J'}$.
Hence $\omega_{J'} - \omega_{J} = \pm\varepsilon_j$ for some $j$ by Proposition \ref{prop:extgraph}.
Since $\sigma_{J'} \subset \gr^{k+1}_\otimes W_{\mathbf{k}, \mathbf{k}+1}$, we deduce, from Proposition \ref{prop:Wdecomp} and the description of $\gr^{k+1}_\otimes W_{\mathbf{k}, \mathbf{k}+1}$, that $\mathbf{k}(J') \geq \mathbf{k}(J)$ and $|\mathbf{k}(J')| =|\mathbf{k}(J)|+1$; in particular $k_i(J') - k_i(J) = \delta_{ij}$.
So if $i \neq j$, then $J\cap \{\pm \varepsilon_i\} = J'\cap \{\pm \varepsilon_i\}$.
While if $i=j$, then $J'\cap \{\pm \varepsilon_j\} = J\cap \{\pm \varepsilon_j\} \sqcup \{ \omega_J' - \omega_J\}$.
Hence $J'$ is of the above form.
\end{proof}

Fix $J\in \mathfrak{J}$.
Recall that by Proposition \ref{prop:Wdecomp}, there is a unique submodule of $W_{\mathbf{k}(J)}\subset R_\mu/\Fil^{>\mathbf{k}(J)} R_\mu$ isomorphic to $\sigma_J$.
If $P_{\sigma_J}$ is a projective envelope of $\sigma_J$, then the morphism $P_{\sigma_J} \surj \sigma_J \subset W_{\mathbf{k}(J)} \subset R_\mu/\Fil^{>\mathbf{k}(J)}_\otimes R_\mu$ lifts to a map $\psi_J: P_{\sigma_J} \rarrow R_\mu$.
We let $V_J$ be the image of $\psi_J$.
The following proposition partially describes the graded pieces of $V_J$.

\begin{prop}\label{prop:Vdecomp}
Let $J\in \mathfrak{J}$.
The filtration $\Fil^{\mathbf{k}}$ on $R_\mu$ induces a filtration on the submodule $V_J$.
Then for all $J'$ such that $J \subset J'$, $\sigma_{J'}\subset \gr^\mathbf{k(J')} V_J$.
\end{prop}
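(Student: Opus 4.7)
My plan is to proceed by induction on $n := \#(J' \setminus J)$. The base case $n=0$ is immediate: since $\psi_J$ lifts $P_{\sigma_J} \surj \sigma_J \inj W_{\mathbf{k}(J)}$, the image $V_J$ surjects onto $\sigma_J \subset \gr^{\mathbf{k}(J)}R_\mu$, giving $\sigma_J \subset \gr^{\mathbf{k}(J)}V_J$.

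For the inductive step, I would pick $J_1$ with $J \subset J_1 \subsetneq J'$ and $\#(J' \setminus J_1) = 1$. The inductive hypothesis applied to $(J,J_1)$ yields $\sigma_{J_1} \subset \gr^{\mathbf{k}(J_1)}V_J$. Using projectivity of $P_{\sigma_{J_1}}$, one lifts $P_{\sigma_{J_1}} \surj \sigma_{J_1}$ to a morphism $P_{\sigma_{J_1}} \to V_J$; composing with $V_J \inj R_\mu$ produces a legitimate choice of $\psi_{J_1}$, so we may arrange $V_{J_1}\subset V_J$. Since the induced filtrations are compatible with this inclusion, $\gr^{\mathbf{k}(J')}V_{J_1} \inj \gr^{\mathbf{k}(J')}V_J$, and it suffices to treat the single-step case $\sigma_{J'} \subset \gr^{\mathbf{k}(J')}V_{J_1}$ with $\#(J' \setminus J_1) = 1$.

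For the one-step case, I would apply Proposition \ref{prop:Vbardecomp} to $V_{J_1}$, placing $\sigma_{J'}$ inside $\gr^{|\mathbf{k}(J_1)|+1}_\otimes \overline{V}_{J_1}$; Lemma \ref{lem:Wdecomp} together with the direct sum decomposition of Proposition \ref{prop:tensor} then pins this $\sigma_{J'}$ inside the single summand $W_{\mathbf{k}(J')}$ of $\gr^{|\mathbf{k}(J_1)|+1}_\otimes R_\mu$. To bridge the tensor filtration and the componentwise filtration, I would further project $\overline{V}_{J_1}$ onto $W_{\mathbf{k}(J_1), \mathbf{k}(J')}$; following the proof of Proposition \ref{prop:Vbardecomp} (via Proposition \ref{prop:ext}), the image contains the nontrivial extension $\sigma_{J_1,J'}$ whose $\sigma_{J'}$-socle lies in $\gr^1 W_{\mathbf{k}(J_1),\mathbf{k}(J')} = W_{\mathbf{k}(J')}$. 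The containment $\Fil^{>\mathbf{k}(J')}R_\mu \subset \Fil^{|\mathbf{k}(J_1)|+2}_\otimes R_\mu$ (valid because every $\mathbf{k} > \mathbf{k}(J')$ satisfies $|\mathbf{k}| \geq |\mathbf{k}(J_1)| + 2$) then allows one to unwind the preimage through $\Fil^{\mathbf{k}(J_1)}R_\mu \surj W_{\mathbf{k}(J_1), \mathbf{k}(J')}$ and locate an element of $V_{J_1} \cap \Fil^{\mathbf{k}(J')}R_\mu$ whose image in $W_{\mathbf{k}(J')}$ realizes the required copy of $\sigma_{J'}$. The main technical hurdle is precisely this last refinement step, bridging the tensor-degree statement of Proposition \ref{prop:Vbardecomp} and the componentwise grading of the present conclusion; its resolution relies crucially on the multiplicity-one Lemma \ref{lem:Wdecomp} isolating $\sigma_{J'}$ within the single direct summand $W_{\mathbf{k}(J')}$.
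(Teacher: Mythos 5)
Your proposal is correct and is essentially the paper's own argument in lightly reorganized form: the paper inducts on $k(J')$, picks an intermediate $J''$ with $J\subseteq J''\subset J'$ and $\#J''=k(J')-1$, uses the uniqueness of the lift $\psibar_{J''}$ to see that its image $\overline{V}_{J''}$ lies inside the image of $V_J$ in $\Fil^{k(J')-1}_\otimes R_\mu/\Fil^{k(J')+1}_\otimes R_\mu$, and then concludes by Proposition \ref{prop:Vbardecomp} --- which is exactly your one-step case. The one place to be careful is your phrase ``we may arrange $V_{J_1}\subset V_J$'': at this point $V_{J_1}$ is only defined as the image of \emph{some} chosen lift $\psi_{J_1}$, and the independence of that choice is Proposition \ref{prop:ind}, whose proof uses the present proposition, so identifying your re-chosen lift's image with ``the'' $V_{J_1}$ would be circular. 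Your argument survives because the one-step case only ever uses the image of $V_{J_1}$ in $\Fil^{|\mathbf{k}(J_1)|}_\otimes R_\mu/\Fil^{|\mathbf{k}(J_1)|+2}_\otimes R_\mu$, which is the image of the \emph{unique} map $\psibar_{J_1}$, hence choice-independent; the paper sidesteps the issue by never introducing $V_{J''}$ at all and working with $\overline{V}_{J''}$ directly inside the image of $V_J$. Finally, the ``bridging'' step you flag (passing from the tensor grading, where Proposition \ref{prop:Vbardecomp} and Lemma \ref{lem:Wdecomp} place $\sigma_{J'}$ in the summand $W_{\mathbf{k}(J')}$, to the componentwise graded piece $\gr^{\mathbf{k}(J')}V_J$) is the same step the paper performs when it asserts $\sigma_{J'}\subset\gr^{\mathbf{k}(J')}\overline{V}_{J''}\subset\gr^{\mathbf{k}(J')}V_J$, so your sketch via $W_{\mathbf{k}(J_1),\mathbf{k}(J')}$ and Proposition \ref{prop:ext} is at the same level of detail as the published proof.
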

\begin{proof}
We proceed by induction on $k=k(J')$.
Suppose that $k < k(J)$.
Then $J \not\subset J'$, and there is no $J'$ as in the statement of the theorem. Thus the theorem holds in this case.

If $k = k(J)$, then $J\subset J'$ implies that $J' = J$.
By construction, $\sigma_J \subset \gr^{\mathbf{k}(J)} V_J$, and so the theorem holds in this case.

Now assume that $k > k(J)$ and that the theorem holds for $\gr^{k-1}_\otimes V_J$.
Suppose that $J'\in \mathfrak{J}$ such that $J\subset J'$ and $k(J') = k$.
Then there exists a $J''\in \mathfrak{J}$ such that $J\subseteq J'' \subset J'$ and $\#J'' = k-1$.
By the inductive hypothesis $\sigma_{J''} \subset \gr^{\mathbf{k}(J'')} V_J \subset \gr^{k-1}_\otimes V_J$.
We thus obtain a nonzero map $P_{\sigma_{J''}} \ra \Fil^{k-1}_\otimes V_J/\Fil^{k+1}_\otimes V_J$ which lifts the map $P_{\sigma_{J''}} \surj \sigma_{J''} \subset W_{\mathbf{k}(J'')}$, and therefore must be $\psibar_{J''}$.
By definition, the image of $\psibar_{J''}$ is $\overline{V}_{J''}$.
By Proposition \ref{prop:Vbardecomp}, $\sigma_{J'} \subset \gr^{\mathbf{k}(J')} \overline{V}_{J''} \subset \gr^{\mathbf{k}(J')} V_J$.
\end{proof}

For Proposition \ref{prop:ind}, we need the following two formal lemmas about tensor products of filtered vector spaces.

\begin{lem}\label{lem:linalg}
Let $\mathbf{k}$ and $\mathbf{k}' \in \{0,1,2\}^f$. Then $\Fil^{\mathbf{k}} R_\mu\cap \Fil^{\mathbf{k}'} R_\mu = \Fil^{\mathbf{k}''} R_\mu$ where $k''_i = \max(k_i,k'_i)$.
\end{lem}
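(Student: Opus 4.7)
The plan is to reduce the claim to a formal fact about tensor products of filtered vector spaces whose filtrations are totally ordered (i.e., chains of subspaces), and then verify that fact by choosing adapted bases.

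First, I would note that for each $i$ the filtration $R_{\mu_i} = \Fil^0 R_{\mu_i} \supset \Fil^1 R_{\mu_i} \supset \Fil^2 R_{\mu_i} \supset \Fil^3 R_{\mu_i} = 0$ from Theorem \ref{thm:projenv}(1) is a flag of subspaces, so in particular $\Fil^a R_{\mu_i} \cap \Fil^b R_{\mu_i} = \Fil^{\max(a,b)} R_{\mu_i}$. I would then pick a basis $\mathcal{B}_i$ of $R_{\mu_i}$ adapted to this flag: start with a basis of $\Fil^2 R_{\mu_i}$, extend to a basis of $\Fil^1 R_{\mu_i}$, then extend to a basis of $R_{\mu_i}$. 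Each vector $e \in \mathcal{B}_i$ carries a well-defined filtration degree $\deg_i(e) \in \{0,1,2\}$, equal to the largest $k$ with $e \in \Fil^k R_{\mu_i}$, and $\Fil^k R_{\mu_i}$ is exactly $\operatorname{span}\{e \in \mathcal{B}_i : \deg_i(e) \geq k\}$.

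Next, the set of pure tensors $\mathcal{B} = \{\otimes_i e_i^{(i)} : e_i \in \mathcal{B}_i\}$ is a basis of $R_\mu = \otimes_i R_{\mu_i}^{(i)}$, and each such basis vector carries a multidegree $\mathbf{d}(e) = (\deg_i(e_{i-1}))_i \in \{0,1,2\}^f$ (with the appropriate index shift matching the convention in $\Fil^{\mathbf{k}} R_\mu = \otimes_i \Fil^{k_{i+1}} R_{\mu_i}^{(i)}$). Directly from the definition of the tensor filtration, one checks that
\[
\Fil^{\mathbf{k}} R_\mu = \operatorname{span}\{ e \in \mathcal{B} : \mathbf{d}(e) \geq \mathbf{k} \text{ componentwise}\}.
\]

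Finally, since a basis vector lies in both $\Fil^{\mathbf{k}} R_\mu$ and $\Fil^{\mathbf{k}'} R_\mu$ if and only if $\mathbf{d}(e) \geq \mathbf{k}$ and $\mathbf{d}(e) \geq \mathbf{k}'$, which is equivalent to $\mathbf{d}(e) \geq \mathbf{k}''$ with $k''_i = \max(k_i, k'_i)$, the intersection is $\operatorname{span}\{e \in \mathcal{B} : \mathbf{d}(e) \geq \mathbf{k}''\} = \Fil^{\mathbf{k}''} R_\mu$, as desired. There is no substantive obstacle; the only point to be slightly careful about is matching the indexing convention $k_{i+1}$ vs. $k_i$ used in the definition of $\Fil^{\mathbf{k}} R_\mu$, but this is purely bookkeeping.
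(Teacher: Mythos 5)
Your proposal is correct and follows essentially the same route as the paper: choose a basis of each $R_{\mu_i}$ adapted to its three-step filtration, form the corresponding tensor basis of $R_\mu$, and observe that each $\Fil^{\mathbf{k}}R_\mu$ is spanned by the tensor basis vectors of multidegree $\geq \mathbf{k}$, so the intersection is spanned by those of multidegree $\geq \mathbf{k}''$. The only difference is that you make the multidegree bookkeeping explicit, which the paper leaves implicit; there is no gap.
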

\begin{proof}
Clearly, $\Fil^{\mathbf{k}''} R_\mu\subset \Fil^{\mathbf{k}} R_\mu\cap \Fil^{\mathbf{k}'} R_\mu$.
For each $i\in \Z/f$, choose a basis for $R_{\mu_i}$ compatible with the filtration and consider the corresponding tensor basis for $R_\mu$.
Then the elements of the tensor basis in $\Fil^{\mathbf{k}} R_\mu$ (resp.~$\Fil^{\mathbf{k}'} R_\mu$) form a basis for $\Fil^{\mathbf{k}} R_\mu$ (resp.~$\Fil^{\mathbf{k}'} R_\mu$).
Thus the elements of the tensor basis in $\Fil^{\mathbf{k}} R_\mu\cap \Fil^{\mathbf{k}'} R_\mu$ form a basis for $\Fil^{\mathbf{k}} R_\mu\cap\Fil^{\mathbf{k}'} R_\mu$.
These elements are in $\Fil^{\mathbf{k}''} R_\mu$, and so $\Fil^{\mathbf{k}} R_\mu\cap \Fil^{\mathbf{k}'} R_\mu \subset \Fil^{\mathbf{k}''} R_\mu$.
\end{proof}

For $I \subseteq \{0,1,2\}^f$, let $\Fil^I R_\mu := \sum_{\mathbf{k}\in I} \Fil^{\mathbf{k}} R_\mu$.

\begin{lem}\label{lem:intersect}
Let $I$ and $I' \subseteq \{0,1,2\}^f$.
Then 
\[\Fil^I R_\mu \cap \Fil^{I'} R_\mu = \sum_{\mathbf{k}\in I,\mathbf{k}'\in I'} \Fil^{\mathbf{k}} R_\mu\cap \Fil^{\mathbf{k}'} R_\mu.\]
\end{lem}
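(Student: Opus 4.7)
The plan is to mimic the tensor-basis argument from the proof of Lemma \ref{lem:linalg}. The inclusion $\supseteq$ is automatic: for any $\mathbf{k}\in I$ and $\mathbf{k}'\in I'$ we have $\Fil^{\mathbf{k}} R_\mu\subseteq \Fil^I R_\mu$ and $\Fil^{\mathbf{k}'} R_\mu\subseteq \Fil^{I'} R_\mu$, so $\Fil^{\mathbf{k}} R_\mu\cap \Fil^{\mathbf{k}'} R_\mu\subseteq \Fil^I R_\mu\cap\Fil^{I'} R_\mu$, and the right-hand side of the asserted identity is stable under addition.

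For the reverse inclusion, the point is that $R_\mu=\bigotimes_i R_{\mu_i}^{(i)}$ admits a basis adapted to the tensor-product filtration. For each $i\in \Z/f$, pick a basis $\mathcal{B}_i$ of $R_{\mu_i}$ compatible with the three-step filtration $\Fil^\bullet R_{\mu_i}$, so every $b\in \mathcal{B}_i$ has a well-defined filtration degree $d_i(b)\in\{0,1,2\}$. The tensor basis $\mathcal{B}=\bigotimes_i \mathcal{B}_i$ then endows each $e\in \mathcal{B}$ with a multi-degree $\mathbf{d}(e)\in\{0,1,2\}^f$, and directly from the definition $\Fil^{\mathbf{k}} R_\mu=\bigotimes_i\Fil^{k_{i+1}}R_{\mu_i}^{(i)}$ one sees that $\Fil^{\mathbf{k}} R_\mu$ is precisely the span of those $e$ with $\mathbf{d}(e)\geq \mathbf{k}$. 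Consequently $\Fil^I R_\mu$ is spanned by those $e\in\mathcal{B}$ with $\mathbf{d}(e)\geq \mathbf{k}$ for some $\mathbf{k}\in I$, and similarly for $\Fil^{I'} R_\mu$.

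It follows that $\Fil^I R_\mu\cap \Fil^{I'} R_\mu$ is spanned by those $e\in\mathcal{B}$ for which there exist $\mathbf{k}\in I$ and $\mathbf{k}'\in I'$ with $\mathbf{d}(e)\geq \mathbf{k}$ and $\mathbf{d}(e)\geq \mathbf{k}'$. For any such $e$, set $\mathbf{k}''=\max(\mathbf{k},\mathbf{k}')$ componentwise; then $\mathbf{d}(e)\geq \mathbf{k}''$, so $e\in \Fil^{\mathbf{k}''} R_\mu=\Fil^{\mathbf{k}} R_\mu\cap \Fil^{\mathbf{k}'} R_\mu$ by Lemma \ref{lem:linalg}, and hence $e$ is a member of the right-hand side of the desired identity. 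Summing over all such $e$ gives the inclusion $\subseteq$. I do not foresee a genuine obstacle: the only point requiring care is the standard fact that the tensor basis $\mathcal{B}$ is compatible with the multi-filtration in the strong sense above, which is immediate over a field from the compatibility of each $\mathcal{B}_i$ with $\Fil^\bullet R_{\mu_i}$.
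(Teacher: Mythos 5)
Your proposal is correct and follows essentially the same route as the paper's proof: both choose filtration-compatible bases of the factors $R_{\mu_i}$, pass to the tensor basis of $R_\mu$, note that each $\Fil^{\mathbf{k}}R_\mu$ and hence each $\Fil^I R_\mu$ is spanned by a subset of this basis, and conclude that the intersection is spanned by the basis vectors lying in both. The only cosmetic difference is that you make the multi-degree $\mathbf{d}(e)$ explicit and route the last step through Lemma \ref{lem:linalg}, which the paper leaves implicit.
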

\begin{proof}
Clearly, $\sum_{\mathbf{k}\in I,\mathbf{k}'\in I'} \Fil^{\mathbf{k}} R_\mu\cap \Fil^{\mathbf{k}'} R_\mu \subset \Fil^I R_\mu \cap \Fil^{I'} R_\mu$.
For each $i\in \Z/f$, choose a basis for $R_{\mu_i}$ compatible with the filtration and consider the corresponding tensor basis for $R_\mu$.
Since the elements of the tensor basis in $\Fil^{\mathbf{k}} R_\mu$ span $\Fil^{\mathbf{k}} R_\mu$ for any $\mathbf{k}$, the elements of the tensor basis in $\Fil^I R_\mu$ (resp. $\Fil^{I'} R_\mu$) span $\Fil^I R_\mu$ (resp. $\Fil^{I'} R_\mu$) and thus form a basis for $\Fil^I R_\mu$ (resp. $\Fil^{I'} R_\mu$).
Thus the elements of the tensor basis in $\Fil^I R_\mu\cap \Fil^{I'} R_\mu$ form a basis for $\Fil^I R_\mu \cap \Fil^{I'} R_\mu$.
It is easy to see that a basis element is in $\Fil^I R_\mu$ (resp. $\Fil^{I'} R_\mu$) if and only it is in $\Fil^{\mathbf{k}} R_\mu$ (resp. $\Fil^{\mathbf{k}'} R_\mu$) for some $\mathbf{k}\in I$ (resp. $\mathbf{k}' \in I'$).
Thus $\Fil^I R_\mu \cap \Fil^{I'} R_\mu \subset \sum_{\mathbf{k}\in I,\mathbf{k}'\in I'} \Fil^{\mathbf{k}} R_\mu\cap \Fil^{\mathbf{k}'} R_\mu$.
\end{proof}

The following proposition shows that $V_J$ does not depend on the choice of lift $\psi_J$, but rather just on $J \in\mathfrak{J}$.

\begin{prop}\label{prop:ind}
Let $J\in \mathfrak{J}$.
Let $\psi'_J$ be a lift of the map $P_{\sigma_J}\surj \sigma_J \subset W_{\mathbf{k}(J)} \subset R_\mu/\Fil^{>\mathbf{k}(J)}R_\mu$.
Then the image of $\psi'_J$ lies in $V_J$.
In other words, $V_J$ does not depend on the choice of $\psi_J$.
\end{prop}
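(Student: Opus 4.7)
The plan is to reduce the statement to the following factorization claim: every morphism $\varphi \in \Hom_\Gamma(P_{\sigma_J}, \Fil^{>\mathbf{k}(J)} R_\mu)$ has image contained in $V_J \cap \Fil^{>\mathbf{k}(J)} R_\mu$. Once this is established, setting $\delta := \psi_J' - \psi_J$ (which lies in this Hom space since both lifts agree modulo $\Fil^{>\mathbf{k}(J)} R_\mu$) gives $\im \psi_J' \subseteq V_J + \im \delta \subseteq V_J$, as desired.

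Since $P_{\sigma_J}$ is a projective envelope of the simple module $\sigma_J$, the functor $\Hom_\Gamma(P_{\sigma_J}, -)$ is exact on finite-length modules and its dimension on such a module equals the Jordan--H\"older multiplicity of $\sigma_J$ inside it. The factorization claim is therefore equivalent to the equality of multiplicities
\[
[V_J \cap \Fil^{>\mathbf{k}(J)} R_\mu : \sigma_J] \;=\; [\Fil^{>\mathbf{k}(J)} R_\mu : \sigma_J],
\]
since the submodule inclusion induces an injection on Hom spaces that must be an isomorphism when both sides have the same dimension.

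For the right-hand multiplicity, Propositions \ref{prop:Wdecomp} and \ref{prop:inj} identify it with the number of $J' \in \mathfrak{J}$ satisfying $\omega_{J'} = \omega_J$ and $\mathbf{k}(J') > \mathbf{k}(J)$; a case analysis on the intersections $J \cap \{\pm\omega^{(i)}\}$ shows these constraints force $J \subsetneq J'$, yielding the count $2^{|I|} - 1$, where $I := \{i : J \cap \{\pm\omega^{(i)}\} = \emptyset\}$. For the left-hand multiplicity, Proposition \ref{prop:Vdecomp} produces, for each such $J'$, a copy of $\sigma_J \cong \sigma_{J'}$ as a Jordan--H\"older factor of $\gr^{\mathbf{k}(J')} V_J$, which lives inside $V_J \cap \Fil^{>\mathbf{k}(J)} R_\mu$ because $\mathbf{k}(J') > \mathbf{k}(J)$; this gives the lower bound $2^{|I|} - 1$, matching the obvious upper bound from the inclusion.

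The main obstacle will be the combinatorial step verifying that $\omega_{J'} = \omega_J$ together with $\mathbf{k}(J') > \mathbf{k}(J)$ imply $J \subsetneq J'$. This is a direct per-index check: at positions $i$ with $\abs{J \cap \{\pm\omega^{(i)}\}} = 1$ the $\omega$-equation forces $J'$ to match $J$, at positions with $\abs{J \cap \{\pm\omega^{(i)}\}} = 2$ the $\mathbf{k}$-inequality prevents $\mathbf{k}(J')_{i+1}$ from dropping below $2$ and again forces $J' \cap \{\pm\omega^{(i)}\} = J \cap \{\pm\omega^{(i)}\}$, and only the positions in $I$ allow freedom (toggling between $\emptyset$ and $\{\pm\omega^{(i)}\}$). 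Beyond this combinatorial input, the remainder of the argument is formal from Propositions \ref{prop:Wdecomp}, \ref{prop:inj}, and \ref{prop:Vdecomp}.
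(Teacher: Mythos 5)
Your argument is correct in substance but is organized differently from the paper's. The paper proves the statement by a successive-approximation induction: it sets $\phi^{k(J)+1}=\psi'_J-\psi_J$ and corrects it one total degree at a time, showing at each step that the $\sigma_J$-isotypic part of $\gr^k_\otimes \Fil^{>\mathbf{k}(J)}R_\mu$ already lies in $\gr^k_\otimes V_J$ (via the same combinatorial observation you make, that $\omega_{J'}=\omega_J$ and $\mathbf{k}(J')>\mathbf{k}(J)$ force $J\subset J'$, together with Proposition \ref{prop:Vdecomp}), and then lifting the induced map to $\Fil^k_\otimes V_J$ using projectivity of $P_{\sigma_J}$; in the end $\psi'_J=\psi_J+\sum_k\psi^k$ with each $\psi^k$ landing in $V_J$. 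You instead run a single global count: exactness of $\Hom_\Gamma(P_{\sigma_J},-)$ plus the equality $\dim\Hom_\Gamma(P_{\sigma_J},M)=[M:\sigma_J]$ reduces everything to matching the $\sigma_J$-multiplicities of $V_J\cap\Fil^{>\mathbf{k}(J)}R_\mu$ and of $\Fil^{>\mathbf{k}(J)}R_\mu$. This is a legitimate and arguably cleaner packaging (no recursion, one dimension count), and it rests on exactly the same inputs. The one place where you are too quick is the assertion that Propositions \ref{prop:Wdecomp} and \ref{prop:inj} alone ``identify'' $[\Fil^{>\mathbf{k}(J)}R_\mu:\sigma_J]$ with the number of $J'$ satisfying $\omega_{J'}=\omega_J$ and $\mathbf{k}(J')>\mathbf{k}(J)$: to know the Jordan--H\"older content of the sum $\Fil^{>\mathbf{k}(J)}R_\mu=\sum_{\mathbf{k}'>\mathbf{k}(J)}\Fil^{\mathbf{k}'}R_\mu$ (and, on the other side, to know that the copies of $\sigma_{J'}$ produced by Proposition \ref{prop:Vdecomp} for distinct $J'$ contribute independently inside $V_J\cap\Fil^{>\mathbf{k}(J)}R_\mu$), you need the compatibility of the multifiltration with sums and intersections, i.e.\ precisely Lemmas \ref{lem:linalg} and \ref{lem:intersect}, which give $\Fil^k_\otimes R_\mu\cap\Fil^{>\mathbf{k}(J)}R_\mu=\sum_{\mathbf{k}'>\mathbf{k}(J),\,|\mathbf{k}'|\ge k}\Fil^{\mathbf{k}'}R_\mu$ and hence $\gr^k_\otimes\Fil^{>\mathbf{k}(J)}R_\mu\cong\oplus_{\mathbf{k}'>\mathbf{k}(J),\,|\mathbf{k}'|=k}W_{\mathbf{k}'}$. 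With those lemmas cited, both your multiplicity computations (the count $2^{|I|}-1$ on each side, with distinct admissible $J'$ giving distinct $\mathbf{k}(J')$) go through, and the factorization claim, hence the proposition, follows.
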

\begin{proof}
We recursively define maps $\phi^k:P_{\sigma_J} \rarrow \Fil^k_\otimes R_\mu \cap \Fil^{>\mathbf{k}(J)}R_\mu$ and maps $\psi^k:P_{\sigma_J} \ra \Fil^k_\otimes V_J$ for $k > k(J)$.
Let $\phi^{k(J)+1} = \psi'_J - \psi_J:P_{\sigma_J} \rarrow \Fil^{k(J)}_\otimes R_\mu \cap \Fil^{>\mathbf{k}(J)}R_\mu$.
Since $\psi'_J$ and $\psi_J$ coincide modulo $\Fil^{k(J)+1}_\otimes R_\mu \cap \Fil^{>\mathbf{k}(J)}R_\mu$, we see that the image of $\phi^{k(J)+1}$ lies in  $\Fil^{k(J)+1}_\otimes R_\mu \cap \Fil^{>\mathbf{k}(J)}R_\mu$.

We now define $\phi^{k+1}$ and $\psi^k$ in terms of $\phi^k$.
We first claim that the $\sigma_J$-isotypic part of $\gr^k_\otimes \Fil^{>\mathbf{k}(J)}R_\mu$ lies in $\gr^k_\otimes V_J$ for all $k$.
Indeed, by Lemmas \ref{lem:linalg} and \ref{lem:intersect}, $\Fil^k_\otimes \Fil^{>\mathbf{k}(J)}R_\mu$ (resp. $\Fil^{k+1}_\otimes \Fil^{>\mathbf{k}(J)}R_\mu$) is the sum $\sum_{\mathbf{k} > \mathbf{k}(J), |\mathbf{k}| \geq k} \Fil^{\mathbf{k}} R_\mu$ (resp. $\sum_{\mathbf{k} > \mathbf{k}(J), |\mathbf{k}| \geq k+1} \Fil^{\mathbf{k}} R_\mu$).
From this, we see that $\gr^k_\otimes \Fil^{>\mathbf{k}(J)}R_\mu = \oplus_{\mathbf{k} > \mathbf{k}(J), |\mathbf{k}|=k} W_{\mathbf{k}}$, which is $\oplus_{J'} \sigma_{J'}$ where the sum runs over $J'$ such that $\mathbf{k}(J') > \mathbf{k}(J)$ and $k(J') = k$ by Proposition \ref{prop:Wdecomp}.
If additionally $\sigma_{J'} \cong \sigma_J$, then $\omega_{J'} = \omega_J$ by Proposition \ref{prop:inj}.
The properties $\mathbf{k}(J') > \mathbf{k}(J)$ and $\omega_{J'} = \omega_J$ imply that for each $i\in \Z/f$, either $J'\cap \{\pm\varepsilon_i\} = J\cap\{\pm\varepsilon_i\}$ or $J\cap\{\pm\varepsilon_i\}$ is empty.
In any case, $J \subset J'$.
We conclude that $\sigma_{J'} \subset \gr^k_\otimes V_J$ by Proposition \ref{prop:Vdecomp}.

Thus the image of $\phi^k$ in $\gr^k_\otimes \Fil^{>\mathbf{k}(J)}R_\mu$, which is $\sigma_J$-isotypic, lies in $\gr^k_\otimes V_J$.
Let $\psi^k:P_{\sigma_J} \ra \Fil^k_\otimes V_J$ be a lift of the map $P_{\sigma_J} \ra \gr^k_\otimes V_J$ induced by $\phi^k$.
Let $\phi^{k+1} = \phi^k - \psi^k: P_{\sigma_J} \ra \Fil^k_\otimes R_\mu \cap \Fil^{>\mathbf{k}(J)}R_\mu$.
Since $\phi^k$ and $\psi^k$ coincide modulo $\Fil^{k+1}_\otimes R_\mu \cap \Fil^{>\mathbf{k}(J)}R_\mu$, the image of $\phi^{k+1}$ lies in $\Fil^{k+1}_\otimes R_\mu \cap \Fil^{>\mathbf{k}(J)}R_\mu$.

Then by construction, $\psi'_J = \psi_J + \sum_{k = k(J)+1}^{2f} \psi^k$.
Thus $\im \psi'_J \subset \im \psi_J + \sum_{k = k(J)+1}^{2f} \im \psi^k \subset V_J$.
\end{proof}

The following is the main submodule structure theorem for generic $\Gamma$-projective envelopes.

\begin{thm} \label{thm:structure}
Let $\mu\in X^{*}(T)$. Assume that $\mu-\eta$ is $1$-deep. Let $J'$ and $J\in \mathfrak{J}$ and let $V_{J'}$ and $V_J$ be the submodules of $R_{\mu}$ defined above Proposition \ref{prop:Vdecomp}.
If $J \subset J'$ then $V_{J'} \subset V_J$.
\end{thm}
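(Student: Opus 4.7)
The plan is to exhibit a lift (in the sense used to define $V_{J'}$) whose image already lies inside $V_J$, and then invoke Proposition \ref{prop:ind} to conclude. The key input from the preceding material is Proposition \ref{prop:Vdecomp}, which provides a copy of $\sigma_{J'}$ inside $V_J$ at the expected filtration level, together with the multiplicity-freeness of the graded pieces $W_{\mathbf{k}}$.

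First I would check that the induced filtration on $V_J\subset R_\mu$ satisfies $\Fil^{\mathbf{k}} V_J = V_J \cap \Fil^{\mathbf{k}} R_\mu$ and $\Fil^{>\mathbf{k}} V_J = V_J \cap \Fil^{>\mathbf{k}} R_\mu$, so that the natural map $\gr^{\mathbf{k}} V_J \inj W_{\mathbf{k}}$ is injective for every $\mathbf{k} \in \{0,1,2\}^f$ by the second isomorphism theorem (its kernel is $V_J\cap \Fil^{\mathbf{k}}R_\mu \cap \Fil^{>\mathbf{k}}R_\mu = \Fil^{>\mathbf{k}}V_J$). Taking $\mathbf{k} = \mathbf{k}(J')$ and combining with Proposition \ref{prop:Vdecomp}, I obtain an inclusion $\sigma_{J'} \inj \gr^{\mathbf{k}(J')} V_J$; by Proposition \ref{prop:Wdecomp} the image of this submodule inside $W_{\mathbf{k}(J')}$ must coincide with the unique $\sigma_{J'}$-summand, since $W_{\mathbf{k}(J')}$ is multiplicity free.

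Next I would use projectivity of $P_{\sigma_{J'}}$ to lift the composition $P_{\sigma_{J'}}\surj \sigma_{J'}\inj V_J/\Fil^{>\mathbf{k}(J')} V_J$ to a morphism $\tilde\psi\colon P_{\sigma_{J'}} \ra V_J$. Post-composing with the inclusion $V_J \inj R_\mu$, the previous step shows that the reduction of $\tilde\psi$ modulo $\Fil^{>\mathbf{k}(J')} R_\mu$ is a nonzero scalar multiple of the canonical map $P_{\sigma_{J'}}\surj \sigma_{J'}\subset W_{\mathbf{k}(J')}\subset R_\mu/\Fil^{>\mathbf{k}(J')}R_\mu$ used to define $V_{J'}$. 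Hence $\tilde\psi$, viewed as a map to $R_\mu$, is a valid lift in the sense of Proposition \ref{prop:ind}, and that proposition applied with $J$ there replaced by $J'$ yields $V_{J'} = \im(\tilde\psi) \subseteq V_J$. The argument is rather formal once Proposition \ref{prop:Vdecomp} is in hand; the only point requiring attention is the identification of the $\sigma_{J'}$ in $\gr^{\mathbf{k}(J')} V_J$ with the canonical one in $W_{\mathbf{k}(J')}$, and I expect no serious obstacle since this is immediate from multiplicity-freeness.
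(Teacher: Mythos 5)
Your proof is correct and takes essentially the same route as the paper's: use Proposition \ref{prop:Vdecomp} to locate the (unique, by multiplicity-freeness of $W_{\mathbf{k}(J')}$) copy of $\sigma_{J'}$ inside $\gr^{\mathbf{k}(J')}V_J$, lift by projectivity of $P_{\sigma_{J'}}$ to a map $P_{\sigma_{J'}}\ra V_J$, and identify its image with $V_{J'}$ via Proposition \ref{prop:ind}. The additional points you spell out (strict compatibility of the induced filtration and the identification of the $\sigma_{J'}$-summand) are exactly what the paper's shorter proof leaves implicit.
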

\begin{proof}
Suppose that $J \subset J'$.
First note that $\sigma_{J'} \subset R_\mu/\Fil^{>\mathbf{k}(J')}R_\mu$ is contained in $V_J/\Fil^{>\mathbf{k}(J')}V_J$ by Proposition \ref{prop:Vdecomp}.
Let $\psi'_{J'}:P_{\sigma_{J'}} \ra V_J$ be a lift of the composition $P_{\sigma_{J'}} \surj \sigma_{J'} \subset V_J/\Fil^{>\mathbf{k}(J')}V_J \subset R_\mu/\Fil^{>\mathbf{k}(J')}R_\mu$.
Then $\im \psi'_{J'} = V_{J'}$ by Proposition \ref{prop:ind}.
We conclude that $V_{J'} \subset V_J$.
\end{proof}

Recall that for $J \in \mathfrak{J}$, we defined maps $\psi_J:P_{\sigma_J}\surj V_J \subset R_\mu$ above Proposition \ref{prop:Vdecomp}.
The following lemma will be useful for multiplicity computations.

\begin{lem} \label{lem:span}
Let $\sigma$ be a Serre weight and $P_\sigma$ a projective envelope of $\sigma$.
The vector space $\Hom_\Gamma(P_\sigma,R_\mu)$ is spanned by the set $\{\psi_J:\sigma_J \cong \sigma\}$.
\end{lem}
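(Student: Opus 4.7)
The plan is to combine a dimension count with a linear independence argument. Since $P_\sigma$ is projective with irreducible cosocle $\sigma$, the functor $\Hom_\Gamma(P_\sigma,-)$ is exact and, on a simple $\Gamma$-module $\sigma'$, computes $\F$ or $0$ according to whether $\sigma\cong \sigma'$ or not. Hence for any finite length $\Gamma$-representation $M$ one has $\dim_{\F}\Hom_\Gamma(P_\sigma,M)=[M:\sigma]$. Applied to $R_\mu$ with the tensor filtration $\Fil^\bullet_\otimes R_\mu$ and Propositions~\ref{prop:tensor} and~\ref{prop:Wdecomp}, this identifies the composition factors of $R_\mu$ with $\{\sigma_J:J\in\mathfrak{J}\}$, each occurring with multiplicity one. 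Thus
\[
\dim_{\F}\Hom_\Gamma(P_\sigma,R_\mu)=\#\{J\in\mathfrak{J}:\sigma_J\cong\sigma\},
\]
which is exactly the cardinality of the proposed spanning set, so it suffices to prove that the $\psi_J$ with $\sigma_J\cong\sigma$ are linearly independent.

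For the independence, I plan a minimality argument along the filtration $\Fil^k_\otimes R_\mu$. By construction $\psi_J$ lifts the inclusion $\sigma_J\subset W_{\mathbf{k}(J)}\subset R_\mu/\Fil^{>\mathbf{k}(J)}R_\mu$, and the preimage of $W_{\mathbf{k}(J)}$ in $R_\mu$ is $\Fil^{\mathbf{k}(J)}R_\mu$, so $\im\psi_J\subset \Fil^{\mathbf{k}(J)}R_\mu\subset \Fil^{k(J)}_\otimes R_\mu$ with $k(J):=|\mathbf{k}(J)|$. Given a hypothetical relation $\sum_{J:\sigma_J\cong \sigma}c_J\psi_J=0$, let $k_0:=\min\{k(J):c_J\neq 0\}$. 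Reducing modulo $\Fil^{k_0+1}_\otimes R_\mu$ kills the terms with $k(J)>k_0$, yielding
\[
\sum_{\substack{J:\sigma_J\cong\sigma\\ k(J)=k_0}}c_J\overline{\psi}_J=0\qquad\text{in }\Hom_\Gamma\bigl(P_\sigma,R_\mu/\Fil^{k_0+1}_\otimes R_\mu\bigr),
\]
and the remaining $\overline{\psi}_J$'s factor through the submodule $\gr^{k_0}_\otimes R_\mu=\bigoplus_{|\mathbf{k}|=k_0}W_{\mathbf{k}}$.

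The crux is to identify where each $\overline{\psi}_J$ lives inside this direct sum: by the defining property of $\psi_J$ and the multiplicity-one statement of Proposition~\ref{prop:Wdecomp}, the image of $\overline{\psi}_J$ is precisely the simple summand labelled by $J$ inside $W_{\mathbf{k}(J)}$. For distinct $J$ with $k(J)=k_0$ these are pairwise distinct direct summands of $\gr^{k_0}_\otimes R_\mu$ (even when the underlying Serre weight types coincide, the index $J$ still pins down a particular summand), so the $\overline{\psi}_J$'s are supported on pairwise orthogonal direct summands and are therefore linearly independent. This forces $c_J=0$ for all $J$ with $k(J)=k_0$, contradicting the choice of $k_0$, and the linear independence is established.

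The only subtle point is the sentence beginning the previous paragraph: one must remember that $\overline{\psi}_J$ lands in the summand of $\gr^{k_0}_\otimes R_\mu$ indexed by $J$, not merely in the $\sigma$-isotypic component. This is built into the construction of $\psi_J$ through the specific inclusion $\sigma_J\subset W_{\mathbf{k}(J)}$ singled out in the remark following Proposition~\ref{prop:Wdecomp}, and is ultimately why the injectivity of $\mathfrak{t}_\mu$ (Proposition~\ref{prop:inj}) is what allows distinct $J$'s to be distinguished inside each $W_{\mathbf{k}}$.
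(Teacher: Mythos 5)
Your proof is correct and follows essentially the same route as the paper: both compute $\dim\Hom_\Gamma(P_\sigma,R_\mu)$ from the graded pieces of the filtration using projectivity together with Proposition \ref{prop:Wdecomp}, and both separate the $\psi_J$ by their nonzero images in the distinct graded summands indexed by $\mathbf{k}(J)$ (the paper packages this as $\Hom_\Gamma(P_\sigma,R_\mu)\cong\oplus_{\mathbf{k}}\Hom_\Gamma(P_\sigma,\gr^{\mathbf{k}}R_\mu)$, while you spell it out as a minimal-degree linear independence argument, using Proposition \ref{prop:inj} in the same way). One wording caveat: the Jordan--H\"older factors of $R_\mu$ are indexed bijectively by $\mathfrak{J}$ but are not multiplicity free as isomorphism classes (e.g. $\sigma_{\emptyset}\cong\sigma_{\{\pm\omega^{(i)}\}}$), though your displayed count $\dim_{\F}\Hom_\Gamma(P_\sigma,R_\mu)=\#\{J:\sigma_J\cong\sigma\}$ is the correct one and is all you use.
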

\begin{proof}
Since $P_\sigma$ is a projective $\Gamma$-module, $\Hom_\Gamma(P_\sigma,R_\mu) \isom \oplus_{\mathbf{k}} \Hom_\Gamma(P_\sigma, \gr^{\mathbf{k}} R_\mu)$.
Since $\gr^{\mathbf{k}} R_\mu$ is semisimple, $\Hom_\Gamma(P_\sigma, \gr^{\mathbf{k}} R_\mu) \isom \Hom_\Gamma(\sigma, \gr^{\mathbf{k}} R_\mu)$.
The space $\Hom_\Gamma(\sigma, \gr^{\mathbf{k}} R_\mu)$ is one-dimensional if there exists a $J\in \mathfrak{J}$ with $\mathbf{k}(J) = \mathbf{k}$ so that $\sigma \cong \sigma_J$ and is otherwise zero by Proposition \ref{prop:Wdecomp}.
In the case that $\Hom_\Gamma(\sigma, \gr^{\mathbf{k}} R_\mu)$ is nonzero, it is spanned by the image of $\psi_J$.
\end{proof}

\section{The Breuil--Pa\v{s}k\={u}nas construction}\label{sec:local}

In this section, we use the results of Section \ref{sec:proj} to give two distinct characterizations of a $\Gamma$-module constructed in \cite{BP}.

Let $F_v/\Q_p$ be an unramified extension.
Fix a tamely ramified representation $\rhobar:\G_{F_v} \rarrow \GL_2(\F)$, and let $R_w(\mu) = V_{\phi}(\rhobar^\vee(1))$ where $\mu = (\mu_i)_i\in X^*(T)$ and $w\in W=(S_2)^f$.

\begin{defin}\label{defin:generic}
We say that $\rhobar$ is {\it $1$-generic} if for all  possible choices of $\mu$ we have that $\mu-\eta$ is 1-deep in alcove $C_0$ and moreover the image of $\mu-\eta$ in the weight lattice of $G^{\der}$ is not of the form $\sum_{i=0}^{f-1}\varepsilon_i$ nor $\sum_{i=0}^{f-1}(p-3)\varepsilon_i$.
(We have $2^f$ possible choices for $\mu$, {\it a posteriori}.)

Concretely, if $\mu = \sum_i \mu_i^{(i)}$ where $\mu_i = (a_i,b_i)\in \Z^2$ then $\mu$ is $1$-generic iff $2\leq a_i - b_i \leq p-2$ for all $i$ and moreover $(a_i - b_i)_i\notin\{(2,\dots,2),\ (p-2,\dots,p-2)\}$.
\end{defin}
Note that if $\rhobar$ is $1$-generic
then for any $F(\mu-\eta) \in W^?(\rhobar^\vee(1))$  the corresponding projective envelope $R_{\mu}$ satisfies the  hypotheses of Theorem \ref{thm:structure}. Moreover, if $\rhobar$ is $1$-generic as in Definition \ref{defin:generic}, then it is in particular generic in the sense of \cite[Definition 11.7]{BP} and \cite[Definition 2.1.1]{EGS}.

We assume throughout that $\rhobar$ is $1$-generic. 
Let $\sigma := F(\mu-\eta) \in W^?(\rhobar^\vee(1))$.
Recall that the Weyl group $W$ acts naturally on $\Lambda_W$.
Let $S_w = w(S_e)$.
Then $W^?(\rhobar^\vee(1)) = F(\mathfrak{t}_\mu(\{\omega_J:J\subset S_w\}))$ by Proposition \ref{prop:weights}.
(We adopt the notation similar to (\ref{eqn:rep}): if $J\subset S_w$ define $\omega_J:=\sum_{\omega\in J}\omega$.)

\begin{defin}\label{defin:bp1wt}
Let $\rhobar$ be $1$-generic and let $\sigma := F(\mu-\eta) \in W^?(\rhobar^\vee(1))$.
We define the $\Gamma$-representation $D_0^\vee(\sigma,\rhobar)$ as
\begin{equation*}
D_0^\vee(\sigma,\rhobar) = R_\mu/\big(\sum_{\substack{J \subset S_w\\ \#J = 1}}V_J\big).
\end{equation*}
\end{defin}

\begin{lem} \label{lem:swmultone}
With the hypotheses of Definition \ref{defin:bp1wt}, the space
\[\Hom_\Gamma\bigg(\underset{\text{\tiny{$\kappa\in W^?(\rhobar^\vee(1))$}}}{\bigoplus} P_{\kappa},D_0^\vee(\sigma,\rhobar)\bigg)\] has dimension at most one and is nonzero if and only if $\kappa \cong \sigma$.
\end{lem}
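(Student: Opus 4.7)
The plan is to exploit projectivity of each $P_\kappa$ together with Lemma \ref{lem:span} to reduce the question to a comparison of the maps $\psi_J\colon P_{\sigma_J}\ra R_\mu$ against the submodule $M:=\sum_{J'\subset S_w,\,\#J'=1}V_{J'}$. Since $P_\kappa$ is projective, the surjection $\Hom_\Gamma(P_\kappa, R_\mu) \surj \Hom_\Gamma(P_\kappa, D_0^\vee(\sigma, \rhobar))$ has kernel given by the maps landing in $M$, and Lemma \ref{lem:span} presents the source as the span of $\{\psi_J:\sigma_J\cong\kappa\}$. So it suffices to decide which of these $\psi_J$ have image in $M$, and Theorem \ref{thm:structure} will be the containment tool of choice.

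The first (and central) step will be a combinatorial fact: for $\kappa\in W^?(\rhobar^\vee(1))$, if $J\in \mathfrak{J}$ satisfies $\sigma_J\cong \kappa$ and $J\cap S_w=\emptyset$, then $\kappa\cong \sigma$. Indeed, by Propositions \ref{prop:weights} and \ref{prop:inj} such $\kappa$ equals $\sigma_{J_\kappa}$ for a unique $J_\kappa\subset S_w$, and $\sigma_J\cong \kappa$ forces $\omega_J=\omega_{J_\kappa}$. Writing $S_w=\{\varepsilon_i\omega^{(i)}\}_i$ with $\varepsilon_i=\pm 1$, the element $\omega_{J_\kappa}$ is a sum of the $\varepsilon_i\omega^{(i)}$ while $\omega_J$ (with $J\subset S\setminus S_w$) is a sum of the $-\varepsilon_i\omega^{(i)}$. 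Since $\{\omega^{(i)}\}_i$ is a $\Z$-basis of $\Lambda_W$, a direct coefficient comparison forces both index sets to be empty, so $J_\kappa=\emptyset$ and $\kappa\cong\sigma$.

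Given this, the case $\kappa\not\cong\sigma$ is immediate: each relevant $J$ meets $S_w$ in some $\omega$, and Theorem \ref{thm:structure} implies $V_J\subset V_{\{\omega\}}\subset M$, so every $\psi_J$ kills $D_0^\vee(\sigma,\rhobar)$ and the Hom vanishes. For $\kappa\cong\sigma$ the condition $\omega_J=0$ forces $J$ to be a disjoint union of pairs $\{\omega^{(i)},-\omega^{(i)}\}$; any such nonempty $J$ meets $S_w$ and the same argument eliminates $\psi_J$. Only $J=\emptyset$ remains. By Theorem \ref{thm:projenv} together with the $1$-genericity of $\rhobar$, $R_\mu$ is a projective envelope of $\sigma$, so $\psi_\emptyset\colon P_\sigma\isom R_\mu$ is an isomorphism. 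Its composition with $R_\mu\surj D_0^\vee(\sigma,\rhobar)$ is nonzero because each $V_{J'}$ contributing to $M$ has cosocle $\sigma_{J'}\not\cong \sigma$ (by Proposition \ref{prop:inj}, since $\omega_{J'}\neq 0$) and thus lies in $\rad R_\mu\subsetneq R_\mu$. This yields the asserted one-dimensional space.

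The main obstacle I expect is the combinatorial sign argument in the second paragraph, which crucially uses both the description of $S_w$ as containing exactly one of $\{\pm \omega^{(i)}\}$ for each $i$ and the injectivity of $\mathfrak{t}_\mu$; once it is in place, the rest is a mechanical combination of Theorem \ref{thm:structure}, Lemma \ref{lem:span}, and projectivity.
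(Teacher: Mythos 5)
Your proof is correct and follows essentially the same route as the paper: both reduce via Lemma \ref{lem:span} and projectivity to deciding which maps $\psi_J$ die in the quotient, use Theorem \ref{thm:structure} to kill every $\psi_J$ with $J\cap S_w\neq\emptyset$, and then a combinatorial comparison of $\omega_J$ (your coefficient argument is just an explicit version of the paper's chamber argument) to show only $J=\emptyset$ survives. Your added justification that $\psi_\emptyset$ remains nonzero in the quotient (each $V_{J'}$ lies in $\rad R_\mu$) simply spells out what the paper leaves as ``of course.''
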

\begin{proof}
Let $J_0\in \mathfrak{J}$ be such that $\sigma_{J_0}\cong\kappa\in W^?(\rhobar^\vee(1))$.
Recall from \S \ref{sec:proj} that for any $J\in \mathfrak{J}$ we have defined  a morphism $\psi_{J}: P_{\sigma_{J}} \rarrow R_\mu$ with image $V_{J}$.
By Lemma \ref{lem:span}, we see that the space $\Hom_\Gamma(P_{\sigma_{J_0}},R_\mu)$, and hence its quotient $\Hom_\Gamma(P_{\sigma_{J_0}},D_0^\vee(\sigma,\rhobar))$, is spanned by the image of $\{\psi_{J}:\omega_{J_0} = \omega_J\}$.
Thus it suffices to show that the image of $\psi_J$ in $\Hom_\Gamma(P_{\sigma_{J_0}},D_0^\vee(\sigma,\rhobar))$ is zero unless $J = \emptyset$ since $\sigma_\emptyset \cong \sigma$.

Let $J\in \mathfrak{J}$ such that $\omega_J = \omega_{J_0}$.
If $w\varepsilon_j\in J$ for some $j$, then $V_{J} \subset V_{\{w\varepsilon_j\}}$ by Theorem \ref{thm:structure},
and we conclude that the image of $\psi_J$ in 
\[\Hom_\Gamma(P_{\sigma_{J}},D_0^\vee(\sigma,\rhobar))\]
is $0$.
Thus if the image of $\psi_J$ is nonzero, then $w\varepsilon_j \notin J$ for all $j$.

If $w\varepsilon_j \notin J$ for all $j$, then $J \subset S_{w_0w}$ where $w_0\in W$ is the longest element.
Hence $\omega_J$ is in the closed $w_0w$-chamber in $X^*(T)$, and is $0$ if and only if $J = \emptyset$.
Since $\omega_J = \omega_{J_0}$ is also in the closed $w$-chamber in $X^*(T)$, we conclude that $\omega_J = 0$ and $J = \emptyset$.
Of course, the image of $\psi_\emptyset$ in $\Hom_\Gamma(P_{\sigma_{J}},D_0^\vee(\sigma,\rhobar))$ is nonzero.
\end{proof}
Let $D_0^\vee(\rhobar) = \oplus_{\sigma\in W^?(\rhobar^\vee(1))} D_0^\vee(\sigma,\rhobar)$.
Let $D_0(\rhobar)$ be $(D_0^\vee(\rhobar))^\vee$ (where $(\cdot)^\vee$ denotes the Pontrjagin duality).
The following proposition gives a characterization of $D_0^\vee(\rhobar)$, which is key for multiplicity one.

Recall that in \cite[Theorem 13.8]{BP} a $\Gamma$-representation $D_0(\rho)$ is attached to a \emph{generic} continuous Galois representation $\rho: G_{\Q_{p^f}}\rightarrow \GL_2(\F)$. For the sake of readibility, we denote this $\Gamma$-representation by $D_0^{\mathrm{BP}}(\rho)$.

\begin{prop}\label{prop:multone}
Assume that $\rhobar:\G_{F_v} \rarrow \GL_2(\F)$ is $1$-generic. Then $D_0(\rhobar)\cong D_0^{\mathrm{BP}}(\rhobar)$.
In particular the Jordan--H\"{o}lder factors of $D_0(\rhobar)$ appear with multiplicity one.
\begin{proof}
The cosocle of $D_0^\vee(\rhobar)$ is isomorphic to $\oplus_{\sigma \in W^?(\rhobar^\vee(1))} \sigma$, and for $\sigma \in W^?(\rhobar^\vee(1))$, $\sigma$ appears with multiplicity one in $D_0^\vee(\rhobar)$ by Lemma \ref{lem:swmultone}.
We will show that there is a surjection from $D_0^\vee(\rhobar)$ to any representation with these properties.

Indeed, assume that $Q$ is any $\Gamma$-representation with cosocle $\oplus_{\sigma \in W^?(\rhobar^\vee(1))} \sigma$ and such that any $\sigma \in W^?(\rhobar^\vee(1))$ appears with multiplicity one in $Q$.
Fix $\sigma \in W^?(\rhobar^\vee(1))$ and write $\sigma=F(\mu-\eta)$.
We have a map $R_\mu\rightarrow Q$ whose composite with $Q\surj \mathrm{cosoc}(Q)$ is non-zero. Let $J\in\mathfrak{J}$ be such that $J\subset S_w$ and $\#J=1$ (we follow the notations as in the beginning of this section) and write $Q_J$
for the image of $V_J\subseteq R_\mu$ in $Q$.
For any $J$ as above, if $Q_J=0$ then $V_J\subset \ker (R_\mu\rightarrow Q)$.
If $Q_J = 0$ for all $J$ as above, then the map $R_\mu \rarrow Q$ would factor through $D_0^\vee(\sigma,\rhobar)$.
If for all $\sigma \in W^?(\rhobar^\vee(1))$, $Q_J=0$ for all $J$ as above, then we would obtain a surjection $D_0^\vee(\rhobar)\surj Q$.
Assume for the sake of contradiction that for some $\sigma$ and some $J$ as above, $Q_J\neq 0$.
Then the modular weight $\sigma_J$ would appear as a Jordan--H\"older factor of the radical of $Q$.
However, $\sigma_J$ is also a Jordan--H\"older factor of the cosocle of $Q$, contradicting the multiplicity one assumption.

To conclude, note that $\sigma \in W^?(\rhobar^\vee(1))$ if and only if $\sigma^\vee \in W^?(\rhobar)$ (cf. e.g. \cite[Proposition 6.23]{Herzig}).
Hence by duality, $D_0(\rhobar)$ satisfies hypothesis \cite[Theorem 13.8(iii)]{BP}.
\end{proof}
\end{prop}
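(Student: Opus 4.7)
The plan is to identify $D_0(\rhobar)$ with the Breuil--Pa\v{s}k\={u}nas representation $D_0^{\mathrm{BP}}(\rhobar)$ via the universal characterization of the latter in \cite[Theorem~13.8]{BP}: combined with the symmetry $\sigma \in W^?(\rhobar^\vee(1)) \iff \sigma^\vee \in W^?(\rhobar)$ from \cite[Proposition~6.23]{Herzig} and Pontrjagin duality, that theorem characterizes $D_0^\vee(\rhobar)$ as the maximal $\Gamma$-representation whose cosocle is $\bigoplus_{\sigma \in W^?(\rhobar^\vee(1))}\sigma$ and in which each such $\sigma$ appears with multiplicity one. The work therefore splits into two parts: verify these two properties for $D_0^\vee(\rhobar)$, and verify the corresponding maximality.

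For the cosocle and multiplicity one statements, I would read them off directly from Lemma \ref{lem:swmultone}, which gives $\dim\Hom_\Gamma(P_\kappa, D_0^\vee(\sigma,\rhobar)) \le 1$ with equality iff $\kappa \cong \sigma$. Summing over $\sigma \in W^?(\rhobar^\vee(1))$ then simultaneously produces the identification $\cosoc(D_0^\vee(\rhobar)) = \bigoplus_\sigma \sigma$ and bounds the total multiplicity of each $\sigma \in W^?(\rhobar^\vee(1))$ in $D_0^\vee(\rhobar)$ by one.

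For the maximality, let $Q$ be any $\Gamma$-representation with the two properties above and fix $\sigma = F(\mu - \eta)\in W^?(\rhobar^\vee(1))$. Projectivity of $R_\mu$ furnishes a lift $f\colon R_\mu \to Q$ of the inclusion of the $\sigma$-summand of $\cosoc(Q)$, and I would argue that $f$ factors through $D_0^\vee(\sigma,\rhobar)$, namely that $Q_J := f(V_J)$ vanishes for every singleton $J \subset S_w$; assembling over $\sigma$ then produces the desired surjection $D_0^\vee(\rhobar) \twoheadrightarrow Q$. The key point is the following. Since $R_\mu$ has irreducible cosocle $\sigma$, the composite $R_\mu \to Q \to \cosoc(Q)$ lands in the single $\sigma$-summand. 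Since $V_J$ has cosocle $\sigma_J$, which by Propositions \ref{prop:inj} and \ref{prop:weights} is a weight in $W^?(\rhobar^\vee(1))$ \emph{distinct} from $\sigma$ (as $\omega_J \ne 0$), the composite $V_J \to Q \to \cosoc(Q)$ must vanish, so $Q_J \subseteq \rad(Q)$. If $Q_J \ne 0$ then $\sigma_J = \cosoc(V_J)$ would appear as a Jordan--H\"older factor of $\rad(Q)$ as well as of $\cosoc(Q)$, contradicting multiplicity one of $\sigma_J$ in $Q$.

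The main obstacle I expect is the cosocle/radical bookkeeping in this last step: one must correctly identify that $\sigma_J$ for singleton $J \subset S_w$ does belong to $W^?(\rhobar^\vee(1))$ (via the weight description in Proposition \ref{prop:weights} and the parametrization of $W^?(\rhobar^\vee(1))$ by $S_w$), and that the image $f(V_J)$ really lies inside $\rad(Q)$ rather than merely hitting a different copy of $\sigma_J$ in $\cosoc(Q)$. Once the maximality surjection is established, applying it with $Q$ equal to the dual of $D_0^{\mathrm{BP}}(\rhobar)$ together with the characterization in \cite[Theorem~13.8(iii)]{BP} yields the isomorphism $D_0(\rhobar) \cong D_0^{\mathrm{BP}}(\rhobar)$, and multiplicity one is then an immediate consequence.
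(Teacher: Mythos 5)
Your proposal is correct and follows essentially the same route as the paper: establish the cosocle and multiplicity-one properties of $D_0^\vee(\rhobar)$ via Lemma \ref{lem:swmultone}, prove maximality by lifting to maps $R_\mu \to Q$ and showing the submodules $V_J$ (singleton $J \subset S_w$) must die in $Q$ via the radical-versus-cosocle multiplicity contradiction, and conclude by duality from \cite[Theorem 13.8]{BP}. Your justification that $Q_J \subseteq \rad(Q)$ (comparing the image of $V_J$ in $\cosoc(Q)$ with the $\sigma$-summand hit by $R_\mu$) is in fact slightly more explicit than the paper's wording, but it is the same argument.
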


Recall that we denote by $W(\F)$ the ring of Witt vectors of $\F$.
If $\sigma(\tau)$ is a tame type defined over $W(\F)[1/p]$ and $\sigma^0(\tau)\subseteq\sigma(\tau)$ is a $\Gamma$-stable $W(\F)$-lattice in it, we denote by $\overline{\sigma}^0(\tau)$ the mod $p$ reduction of $\sigma^0(\tau)$.

\begin{lem}\label{lem:multone}
Suppose that $D_0^\vee$ is a $\Gamma$-representation such that $\dim \Hom_K(D_0^\vee,\sigma)$ is $1$ if $\sigma \in W^?(\rhobar^\vee(1))$ and $0$ otherwise.
Assume moreover that for any tame type $\sigma(\tau)$ and for any $W(\F)$-lattice $\sigma^0(\tau)\subseteq\sigma(\tau)$ such that $\mathrm{soc}(\overline{\sigma}^0(\tau))$ is irreducible, one has 
\[\dim \Hom_K(D_0^\vee,\overline{\sigma}^0(\tau)) \leq 1.\]
Then $\JH(\rad (D_0^\vee)) \cap W^?(\rhobar^\vee(1)) = \emptyset$.
\end{lem}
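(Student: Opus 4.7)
The plan is to argue by contradiction. Suppose there exists $\sigma_0\in W^?(\rhobar^\vee(1))\cap \JH(\rad(D_0^\vee))$. Since the first hypothesis forces $[\cosoc(D_0^\vee):\sigma_0]=1$, one obtains $[D_0^\vee:\sigma_0]\geq 2$, and hence $\dim\Hom_\Gamma(D_0^\vee,P_{\sigma_0}) = [D_0^\vee:\sigma_0]\geq 2$, where $P_{\sigma_0}$ denotes the projective envelope of $\sigma_0$ (which, as $\Gamma$ is finite, is also its injective envelope). The goal is to exhibit a tame type $\tau$ and a lattice $\sigma^0(\tau)\subseteq \sigma(\tau)$ with $\mathrm{soc}(\overline{\sigma}^0(\tau))$ irreducible and $\dim\Hom_\Gamma(D_0^\vee, \overline{\sigma}^0(\tau))\geq 2$, contradicting the second hypothesis.

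First I would reformulate the second hypothesis. Fix any lattice $\sigma^0(\tau)$ with irreducible socle $\kappa\in W^?(\rhobar^\vee(1))$. The inclusion $\kappa\hookrightarrow\overline{\sigma}^0(\tau)$ induces an injection $\Hom_\Gamma(D_0^\vee,\kappa)\hookrightarrow \Hom_\Gamma(D_0^\vee, \overline{\sigma}^0(\tau))$ between spaces of dimension $1$ and $\leq 1$ respectively, so the two sides must have dimension exactly $1$; equivalently, every $\Gamma$-homomorphism $D_0^\vee\to \overline{\sigma}^0(\tau)$ has image exactly $\kappa$. Thus to derive a contradiction it suffices (taking $\kappa=\sigma_0$) to construct a map $D_0^\vee\to \overline{\sigma}^0(\tau)$ with image strictly containing $\sigma_0$.

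The key step is to produce this forbidden map by exploiting the assumption $\sigma_0\in\JH(\rad(D_0^\vee))$. Since $\cosoc(D_0^\vee)=\bigoplus_{\sigma\in W^?(\rhobar^\vee(1))}\sigma$, an analysis of the Loewy filtration of $D_0^\vee$ yields a weight $\tau'\in W^?(\rhobar^\vee(1))$ with $\Ext_\Gamma^1(\tau',\sigma_0)\neq 0$ together with a quotient $M$ of $D_0^\vee$ that is a nonsplit extension $0\to \sigma_0\to M\to \tau'\to 0$; if $\sigma_0$ lies deep in the radical, the same construction produces a longer chain. By Proposition \ref{prop:extgraph}, $\sigma_0$ and $\tau'$ must be adjacent in the extension graph. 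Proposition \ref{prop:weights} then provides a tame inertial $L$-parameter $\tau$ with $\sigma_0,\tau'\in W^?(\tau)$, and the classification of lattices in tame types (\cite[\S 4 and \S 10]{EGS}) yields a lattice $\sigma^0(\tau)\subseteq \sigma(\tau)$ whose reduction has irreducible socle $\sigma_0$ and admits a length-two submodule isomorphic to $M$. Composing $D_0^\vee\twoheadrightarrow M\hookrightarrow\overline{\sigma}^0(\tau)$ yields a homomorphism with image $M\supsetneq \sigma_0$, the desired contradiction.

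The main obstacle I foresee is the simultaneous realization of the extension $M$ as both a quotient of $D_0^\vee$ and a submodule of $\overline{\sigma}^0(\tau)$. This matching hinges on the combinatorial description of $W^?(\tau)$ in Proposition \ref{prop:weights} together with the adjacency structure of the extension graph of Section \ref{sec:ext}; in the general case (when $\sigma_0$ sits deep in the radical of $D_0^\vee$) one must iterate the argument along a path in the extension graph rather than a single adjacency, working instead with a longer subquotient of $\overline{\sigma}^0(\tau)$ of the appropriate Loewy length.
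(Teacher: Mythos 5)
Your outline reproduces the paper's endgame but omits its key step. In the case where $\sigma_0$ occurs in the \emph{first} radical layer, what you propose is essentially the paper's argument: pass to a quotient $E$ of $D_0^\vee$ of Loewy length two with socle $\sigma_0$ and cosocle consisting of weights of $W^?(\rhobar^\vee(1))$, choose a tame type whose reduction contains the relevant weights (the paper takes $\JH(\overline{\sigma}(\tau))$ to be exactly $W^?(\rhobar^\vee(1))$, which is possible since $\rhobar$ is tame; note that what you actually need is membership in $\JH(\overline{\sigma}(\tau))$, not in $W^?(\tau)$, so Proposition \ref{prop:weights} is not quite the right citation), take the lattice with $\mathrm{soc}(\overline{\sigma}^0(\tau))\cong\sigma_0$ given by \cite[Proposition 4.1.1]{EGS}, embed $E\inj\overline{\sigma}^0(\tau)$ using \cite[Theorem 5.1.1]{EGS}, and observe that $D_0^\vee\surj E\inj\overline{\sigma}^0(\tau)$ and $D_0^\vee\surj\sigma_0\inj\overline{\sigma}^0(\tau)$ are linearly independent, contradicting $\dim\Hom_K(D_0^\vee,\overline{\sigma}^0(\tau))\leq 1$. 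Up to the citation issue, this part is sound and agrees with the paper.

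The genuine gap is exactly the point you defer to the last sentence: nothing in your argument shows that $\sigma_0$ occurs in the first layer of the cosocle filtration, and your fallback for the deep case --- iterating along a path in the extension graph and embedding a longer chain into $\overline{\sigma}^0(\tau)$ --- does not go through with the tools you invoke. The intermediate Jordan--H\"older factors of a chain in $D_0^\vee$ from a cosocle constituent down to a deep occurrence of $\sigma_0$ are arbitrary Serre weights: they need not lie in the $2^f$-element set $\JH(\overline{\sigma}(\tau))$ for any single tame type whose distinguished lattice has socle $\sigma_0$, and even when they do, the existence of an embedding of a \emph{specific} module of Loewy length $\geq 3$ into $\overline{\sigma}^0(\tau)$ is not furnished by \cite[Proposition 4.1.1, Theorem 5.1.1]{EGS} nor by Proposition \ref{prop:extgraph}, whose one-dimensional $\Ext^1$ statement pins down isomorphism classes only for length-two extensions. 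This reduction to Loewy length two is where the paper uses the machinery of Section \ref{sec:proj}: writing $D_0^\vee$ as a quotient of $\oplus_{\kappa\in W^?(\rhobar^\vee(1))}P_\kappa$ with images $I_\kappa$, it argues that if no weight of $W^?(\rhobar^\vee(1))$ occurred in the first radical layer, then for $\kappa=F(\mu-\eta)$ each submodule $V_J\subset R_\mu$ with $\#J=1$ and $\sigma_J\in W^?(\rhobar^\vee(1))$ would lie in the kernel of $R_\mu\ra I_\kappa$, so $I_\kappa$ would be a quotient of $D_0^\vee(\kappa,\rhobar)$, whose radical contains no weight of $W^?(\rhobar^\vee(1))$ at all by Lemma \ref{lem:swmultone}; this contradicts $\sigma_0\in\JH(\rad(I_\kappa))$ for some $\kappa$. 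Without this step (or some substitute resting on Theorem \ref{thm:structure}), your proof only covers the case $\sigma_0\subset\gr^1 D_0^\vee$.
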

\begin{proof}
Suppose that $\sigma \in W^?(\rhobar^\vee(1))$, and $\sigma$ is a Jordan--H\"older factor of the radical of $D_0^\vee$.
By properties of projective envelopes, we can choose a $\Gamma$-surjection $\oplus_{\kappa\in W^?(\rhobar^\vee(1))} P_\kappa \surj D_0^\vee$. Let $I_\kappa\subset D_0^\vee$ the image of $P_\kappa$. We have $\rad (D_0^\vee)=\sum_{\kappa\in W^?(\rhobar^\vee(1))}\rad (I_\kappa)$, thus there is some $\kappa$ such that $\sigma$ is a Jordan--H\"older factor of the radical of $I_\kappa$.
For a $\Gamma$-representation $M$, we now denote by $\Fil^k M$ the cosocle filtration on $M$.
Then we have $\sigma \subset \gr^k D_0^\vee$ for some $k>0$.
Without loss of generality, suppose that $k$ is minimal among such Serre weights $\sigma\in W^?(\rhobar^\vee(1))$.

We claim that $k=1$.
Assume that $k>1$.

By minimality of $k$, $\Fil^1 I_\kappa/\Fil^k I_\kappa$ does not contain any weight in $W^?(\rhobar^\vee(1))$ as a Jordan--H\"older factor.
Let $\kappa = F(\mu-\eta)$ so that $P_\kappa \cong R_\mu$.
Thus that $V_J \subset \ker(\theta)$ for all $J$ such that $\#J = 1$ and $\sigma_J \in W^?(\rhobar^\vee(1))$.
By Lemma \ref{lem:swmultone}, $\rad(I_\kappa)$ does not contain any weight in $W^?(\rhobar^\vee(1))$ as a Jordan--H\"older factor, and in particular $\sigma$.
This is a contradiction.

Thus, there is a quotient $E$ of $D_0^\vee$ which has Loewy length two and socle isomorphic to $\sigma$.
Choose a type $\sigma(\tau)$ so that $\overline{\sigma}(\tau)$ contains $\kappa$ and $\sigma$ as Jordan--H\"older factors (one can even choose $\overline{\sigma}(\tau)$ to have Jordan--H\"older factors exactly the set $W^?(\rhobar^\vee(1))$).
There exists a unique up to homothety lattice $\sigma^0(\tau)$ such that $\mathrm{soc}(\overline{\sigma}^0(\tau))\cong \sigma$ (see \cite[Proposition 4.1.1]{EGS}).
There is an injection $E\inj \overline{\sigma}^0(\tau)$ by \cite[Theorem 5.1.1]{EGS}.
Then the maps $D_0^\vee \surj E \inj \overline{\sigma}^0(\tau)$ and $D_0^\vee \surj \sigma \inj \overline{\sigma}^0(\tau)$ are linearly independent, so that $\dim \Hom_K(D_0^\vee,\overline{\sigma}^0(\tau)) > 1$, a contradiction.
\end{proof}

The following proposition is an alternative characterization of $D_0^\vee(\rhobar)$.

\begin{prop} \label{prop:upperbound}
Suppose that $D_0^\vee$ is a $\Gamma$-representation such that $\Hom_K(D_0^\vee,\sigma)$ has dimension $1$ if $\sigma \in W^?(\rhobar^\vee(1))$ and $0$ otherwise.
Assume moreover that for any tame type $\sigma(\tau)$ and for any $W(\F)$-lattice $\sigma^0(\tau)\subseteq\sigma(\tau)$ such that $\mathrm{soc}(\overline{\sigma}^0(\tau))$ is irreducible, one has 
\[\dim \Hom_K(D_0^\vee,\overline{\sigma}^0(\tau)) \leq 1.\]
Then there is a $\Gamma$-surjection $D_0^\vee(\rhobar) \surj D_0^\vee$.
\end{prop}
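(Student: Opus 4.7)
The plan is to combine Lemma \ref{lem:multone} with the argument used in Proposition \ref{prop:multone}, treating $D_0^\vee$ as a representation of the same shape as that of $D_0^\vee(\rhobar)$ and constructing the desired surjection piece by piece.

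First I would observe that the hypothesis $\dim \Hom_K(D_0^\vee,\sigma) = 1$ for $\sigma\in W^?(\rhobar^\vee(1))$ and $0$ otherwise means that the cosocle of $D_0^\vee$ is exactly $\bigoplus_{\sigma\in W^?(\rhobar^\vee(1))}\sigma$. Combining this with the lattice hypothesis, Lemma \ref{lem:multone} applies and yields $\mathrm{JH}(\rad(D_0^\vee))\cap W^?(\rhobar^\vee(1)) = \emptyset$. In particular, every $\sigma\in W^?(\rhobar^\vee(1))$ occurs with multiplicity exactly one in $D_0^\vee$.

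Next, I would replay the construction from the proof of Proposition \ref{prop:multone}. For each $\sigma = F(\mu-\eta)\in W^?(\rhobar^\vee(1))$, projectivity of $R_\mu\cong P_\sigma$ produces a lift $\Phi_\sigma: R_\mu \rightarrow D_0^\vee$ of the projection onto the $\sigma$-isotypic part of the cosocle. The key point is to show that $\Phi_\sigma$ annihilates $V_J$ for every $J\subset S_w$ with $\#J=1$, so that $\Phi_\sigma$ factors through $D_0^\vee(\sigma,\rhobar) = R_\mu/\sum_{J}V_J$. For such a $J$ we have $\sigma_J \not\cong \sigma$ (by Proposition \ref{prop:inj}, since $\omega_J \neq 0$), so $V_J\subset \rad(R_\mu)$, hence $\Phi_\sigma(V_J)$ lies in the radical of the image of $\Phi_\sigma$. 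If $\Phi_\sigma(V_J)$ were nonzero, its cosocle $\sigma_J$ (which equals the cosocle of $V_J$) would be a Jordan--H\"older factor of $\rad(D_0^\vee)$. But $\sigma_J\in W^?(\rhobar^\vee(1))$ because $J\subset S_w$, contradicting the consequence of Lemma \ref{lem:multone} obtained in the first step. Hence $\Phi_\sigma$ descends to $D_0^\vee(\sigma,\rhobar)\rightarrow D_0^\vee$.

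Finally, assembling these maps over $\sigma\in W^?(\rhobar^\vee(1))$ yields a $\Gamma$-homomorphism
\[ D_0^\vee(\rhobar) = \bigoplus_{\sigma\in W^?(\rhobar^\vee(1))} D_0^\vee(\sigma,\rhobar) \longrightarrow D_0^\vee \]
which, by construction, induces an isomorphism on cosocles. By Nakayama, this map is surjective. The only real step of substance is verifying that the hypotheses of Lemma \ref{lem:multone} rule out each $\sigma\in W^?(\rhobar^\vee(1))$ appearing in the radical of $D_0^\vee$; everything else is a straightforward repackaging of the argument already given for Proposition \ref{prop:multone}, using Theorem \ref{thm:structure} implicitly through the definition of $D_0^\vee(\sigma,\rhobar)$.
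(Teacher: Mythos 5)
Your proposal is correct and follows essentially the same route as the paper: invoke Lemma \ref{lem:multone} to see that no weight of $W^?(\rhobar^\vee(1))$ occurs in $\rad(D_0^\vee)$, deduce that each lift $R_\mu \to D_0^\vee$ of the projection onto the $\sigma$-part of the cosocle kills $V_J$ for $J\subset S_w$, $\#J=1$ (since $V_J$ has cosocle $\sigma_J\in W^?(\rhobar^\vee(1))$ and lies in $\rad(R_\mu)$), and conclude that the projective-cover surjection factors through $D_0^\vee(\rhobar)$. The extra details you supply (the radical containment and the cosocle/Nakayama assembly) are sound and merely make explicit what the paper leaves implicit.
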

\begin{proof}
By properties of projective envelopes, there is a $\Gamma$-equivariant surjection $\oplus_{\kappa\in W^?(\rhobar^\vee(1))} P_\kappa \surj D_0^\vee$.
Fix $\sigma \in W^?(\rhobar^\vee(1))$, and let $\sigma = F(\mu-\eta)$.
Let $\theta: R_\mu \ra D_0^\vee$ be a restriction of the above surjection to one direct summand.
Fix $J$ such that $\#J = 1$ and $\sigma_J \in W^?(\rhobar^\vee(1))$.
By Lemma \ref{lem:multone}, $\sigma_J$ does not appear in the image of $\theta$.
Thus $V_J\subseteq R_\mu$ is in the kernel of $\theta$, and the above surjection factors through $D_0^\vee(\rhobar)$.
\end{proof}

\section{Global applications}\label{sec:global}

In this section, we deduce our main theorem on cohomology of Shimura curves at full congruence level.
We are going to follow closely \cite{BD}, \S 3.2, 3.5 and 3.6, and \cite{EGS}, \S 6.5.

Recall that $F$ is a totally real field where $p$ is unramified. We write $\Sigma_p$ (resp.  $\Sigma_{\infty}$) the set of places of $F$ above $p$ (resp. above $\infty$). We write $\bA_F$ to denote the ring of ad\`eles of $F$.
We fix a continuous Galois representation $\rbar: G_F\rightarrow \GL_2(\F)$ which satisfies the following conditions:

\begin{enumerate}[(i)]
	\item $\rbar$ is modular;
	\item $\rbar|_{G_{F(\zeta_p)}}$ is absolutely irreducible;
	\item if $p=5$ then the image of $\rbar(G_{F(\zeta_p)})$ in $\mathbf{PGL}_2(\F)$ is not isomorphic to $A_5$;
	\item $\rbar|_{G_{F_w}}$ is \emph{generic} (in the sense of \cite{EGS}, Definition 2.1.1) for all $w\in \Sigma_p$.
\end{enumerate}
We write $\Sigma_{\rbar}$ for the ramification set of $\rbar$.
We fix the continuous character  $\psi: G_{F}\rightarrow \F^{\times}$ defined by $\psi:= \omega\det \rbar$ and write $\teich{\psi}$ to denote its Teichm\"uller lift.

Let $D$ be a quaternion algebra with center $F$ and let $\Sigma_D$ be the set of places where $D$ ramifies. We assume that: 
\begin{enumerate}[$\circ$]
	\item $\# (\Sigma_\infty\setminus \Sigma_D)\leq 1$;	
	\item $\Sigma_p\cap \Sigma_D=\emptyset$.
\end{enumerate}

We define  $S:=\Sigma_p\cup(\Sigma_D\setminus\Sigma_\infty)\cup\Sigma_{\rbar}$.
We note that the condition $p>3$ (coming from the genericity assumption on $\rbar|_{G_{F_w}}$) guarantees the existence of a place $w_1\notin S$ such that:
\begin{enumerate}[$\circ$]
	\item $\N(w_1)\not\equiv 1$ modulo $p$;
	\item the ratio of the eigenvalues of $\rbar(\mathrm{Frob}_{w_1})$ is not in $\{1,\ \N(w_1)^{\pm1}\}$; and
	\item if $\ell$ is a prime such that $[F(\sqrt[\ell]{1}):F]\leq 2$, then $w_1\nmid \ell$
\end{enumerate}
(cf.~\cite{BD}, item (iv) in the proof of Lemma 3.6.2).
If $\ell$ is the unique prime number which is divisible by $w_1$, we then define $K_{w_1}\leq(\cO_D)_{w_1}^{\times}$ as the pro-$\ell$-Iwahori of $(\cO_D)_{w_1}^{\times}$.
The conditions on $w_1$ and $K_{w_1}$ guarantee that for any open compact subgroup $K^{w_1}\leq (D\otimes_{F}\bA_F^{\infty,w_1})^{\times}$, the subgroup $K_{w_1} K^{w_1}$ is sufficiently small in the sense of \cite{GK}, \S 2.1.2.

We define $K^S:=\prod'_{w\notin S}K_w$ where $K_w:=(\cO_D)_w^{\times}$ for all $w\notin S\cup\{w_1\}$. 
We now follow the procedure of \cite{EGS}, \S 6.5 to obtain a space of algebraic automorphic forms with minimal tame level. 
We fix once and for all a place $v\in \Sigma_p$ and assume moreover that
\begin{enumerate}[(i)]\setcounter{enumi}{4}
	\item for all $w\in \Sigma_D$, $\rbar|_{G_{F_w}}$ is non-scalar.
\end{enumerate}

Let $S'\subseteq \Sigma_p\cup\Sigma_D$ be the subset of finite places $w\in \Sigma_p\cup\Sigma_D$ such that $\rbar|_{G_{F_w}}$ is reducible.
Write $W(\F)$ for the ring of Witt vectors of $\F$.
Following \cite{EGS}, \S 6.5 (which is in turn based on \cite{BD}, \S 3.3 and the proof of Proposition 3.5.1 in \emph{loc. cit.}), we fix for each $w\in S\setminus\{v\}$ the following data (we refer to \cite{EGS} and \cite{BD} for their precise definitions):
\begin{enumerate}[(1)]
	
	\item \label{primo} if $\rbar|_{G_{F_w}}$ is irreducible, the maximal compact $K_w:=(\cO_D)_w^{\times}$, an inertial type $\tau_w:I_w\rightarrow \GL_2(W(\F))$ (as in \cite[Proposition 3.5.1]{EGS} if $w\in \Sigma_p$, as in \cite{BD}, Cas IV in \S 3.3 else), and a $W(\F)$-lattice $L_w\subseteq \sigma(\tau_w)$. 
	
\item \label{secondo} if $\rbar|_{G_{F_w}}$ is reducible and $w\notin S'$, a compact subgroup $K_w\leq(\cO_D)_w^{\times}$ and a free $W(\F)$-module $L_w$ with a locally constant action of $K_w$ 
(cf. also \cite{BD}, Cas III at \S 3.3, and Cas (ii) in the proof of Proposition 3.5.1);

	\item \label{terzo} if $w\in S'$, a compact subgroup $K_w\leq(\cO_D)_w^{\times}$, a free $W(\F)$-module $L_w$ with a locally constant action of $K_w$ and a scalar $\beta_w\in \F^{\times}$ 
(cf. also \cite{BD}, Cas I and II at \S 3.3 and Cas (iii) in the proof of Proposition 3.5.1).

\end{enumerate}
We further remark that the $K_w$-representation $L_w$ has been chosen so that the center $F_w\cap K_w$ acts on $L_w$ via $\teich{\psi}\circ\Art_{F_w}$.
We define $K^v_S:= \underset{w\in S\setminus\{v\}}{\prod}K_w$, $K^v:= K^v_S K^{S}$ and $\tld{V}^v:= \underset{w\in S\setminus\{v\}}{\bigotimes}L_w$, which is a $W(\F)$-module of finite type with a locally constant action of $K^v_S$, hence of $K^v$ by inflation.
Via $\teich{\psi}$ we can and do endow $\tld{V}^v$ with an action of  $K^v(\bA_F^{(\infty,v)})^{\times}$. 
We write $\tld{V}^v_{\teich{\psi}}$ to denote the resulting $K^v(\bA_F^{(\infty,v)})^{\times}$-representation.

Let $K:= K^vK_v$.
Let $\Rep_{\F}^{\psi}(K_v)$ be the category of $\F$-modules of finite type, endowed with an action of
$K_v:= (\cO_D)_v^{\times}\cong \GL_2(\cO_{F_v})$
and such that $K_v\cap F_v^{\times}$ acts via the character $\psi\circ \Art_{F_v}$.
In particular if $V_v\in \Rep_{\F}^{\psi}(K_v)$ then the finite $\F$-module $V:= \tld{V}^v_{\tld{\psi}}\otimes V_v$ is endowed with an action of $K(\bA_F^{(\infty,v)})^{\times}$ which extends naturally to an action of $K (\bA_F^{\infty})^{\times}$.
We write $V_\psi$ to denote the resulting $K (\bA_F^{\infty})^{\times}$-representations.
By construction $(\bA_F^{\infty})^{\times}$ acts on  $V_\psi$  via $\psi$.

If $\# \big(\Sigma_\infty\setminus \Sigma_D\big)= 1$ we define the space of algebraic modular forms of level $K$, coefficients in $V_\psi$ and central character $\psi$ as:
\begin{equation}
\label{esp:faaIN}
S_{\psi}(K, V_v^{\vee}):= H^1_{\mathrm{\acute{e}t}}(X_K\otimes_F \overline{F},\mathcal{F}_{(V_\psi)^{\vee}})
\end{equation}
where $X_K$ is the smooth projective algebraic curve associated to $K$  as in \cite[\S 3.1]{BD} and $\mathcal{F}_{V_\psi^\vee}$ is the local system on $X_K\otimes_F \overline{F}$ associated to $V_\psi^\vee$ in the usual way (cf. \cite[proof of Lemma 6.2]{BD}).

If $\#\big( \Sigma_\infty\setminus \Sigma_D\big)= 0$ we define the space of algebraic modular forms of level $K$, coefficients in $V_\psi$ and central character $\psi$ as:
\begin{equation}
\label{esp:faa}
S_{\psi}(K, V_v^{\vee}):= \left\{\begin{matrix}
f: D^{\times}\backslash (D\otimes_F\bA_F^{\infty})^{\times}\rightarrow V_\psi^{\vee},\ f\ \text{continuous,}\\ \qquad f(gk)=k^{-1}f(g)\,\,\forall g\in (D\otimes_F\bA_F^{\infty})^{\times}, k\in K(\bA_F^{\infty})^{\times}
\end{matrix}
\right\}.
\end{equation}

We have a variation of the previous spaces with ``infinite level at $v$'' defined as follows:
\begin{equation*}
S_{\psi}(K^v, \F):= \underset{\substack{\rightarrow\\ U_v\leq K_v}}{\lim}S_{\psi}(K^vU_v, \F)
\end{equation*}
where $U_v$ ranges among the compact open subgroups $K_v$.
It is endowed with a smooth action of $D_v^{\times}\cong \GL_2(F_v)$.

The $\F$-modules $S_{\psi}(K, V_v^\vee)$, $S_{\psi}(K^v, \F)$ are faithful modules over a certain Hecke algebra which is defined as follows. 
Consider the $\F$-polynomial algebra $\bT^{S\cup\{w_1\}}:= \F[T_w^{(i)},\ w\notin S\cup\{w_1\}]$. For all $w\notin S\cup\{w_1\}$, $1\leq i\leq 2$ define the Hecke operator $T_w^{(i)}$ as the usual double classe operator acting on $S_{\psi}(K,V_v^{\vee})$:
$$
\left[ \GL_2(\cO_{F_w}) \left(\begin{matrix}
      \varpi_{w}\mathrm{Id}_i &  \cr  & \mathrm{Id}_{2-i} \end{matrix} \right)
\GL_2(\cO_{F_w}) \right]
$$
We then have an evident morphism of $\F$-algebras $\bT^{S\cup\{w_1\}}\rightarrow\End_W(S_{\psi}(K, V_v^{\vee}))$ whose image will be denoted by $\bT(V_v)$. From the hypothesis (i) there is a surjection $\alpha_{\rbar}: \bT(V_v)\rightarrow \F$ such that
\begin{equation*}
\det\big(X\mathrm{Id}_2-\overline{\psi}\rbar(\Frob_w)\big)=X-\alpha_{\rbar}(T_w^{(1)})X+\N(w)\alpha_{\rbar}(T_w^{(2)})
\end{equation*}
for all $w\notin S\cup\{w_1\}$. We note $\m_{\rbar}:= \ker(\alpha_{\rbar})$. 


For $w\in S'\cup\{w_1\}$ we can define the Hecke operator $T_w^{(1)}$ acting on $S_{\psi}(K, V_v^\vee)_{\m_{\rbar}}$  (cf. \cite{EGS} \S 6.5,
cf. also \cite{BD}, \S 3.3 Cas I et II), as well as scalars $\beta_w\in \F^{\times}$.  We write $\bT'(V_v)$  for the subalgebra of $\End_{\bT(V_v)}(S_{\psi}(K, V_v^\vee)_{\m_{\rbar}})$ generated by $\bT(V_v)$ and the operators $T_w^{(1)}$, $w\in S'\cup\{w_1\}$. 
In particular $\bT(V_v)_{\m_{\rbar}}\subseteq \bT'(V_v)$ is a finite extension of semi-local rings.
If $\m'_{\rbar}$ denotes the ideal of $\bT'(V_v)$ above $\m_{\rbar}$ and generated by the elements $T_w^{(1)}-\beta_w$, we easily see that $\m'_{\rbar}$  is a maximal ideal in $\bT'(V_v)$.

Note that the choices of types $\sigma(\tau_w)$, lattices $L_w\subseteq \sigma(\tau_w)$ and scalars $\beta_w$ (cf. items (\ref{primo}), (\ref{secondo}) and (\ref{terzo})  above) are exactly those of \cite[\S 6.5]{EGS} (in turn based on \cite[\S 3.3-3.5]{BD}) and the $\m'_{\rbar}$-generalized eigenspace of the modules (\ref{esp:faaIN}), (\ref{esp:faa}) are precisely the $\m'_{\rbar}$-generalized eigenspace of the spaces of fixed determinant algebraic modular forms with $V_\psi^{\vee}$-coefficients and minimal level as defined in \cite[\S 6.5]{EGS} (and denoted as $S^{\mathrm{min}}(V_v^{\vee})_{\m_{\rbar}}$ in \emph{loc.~cit.}).

\vspace{2mm}

If $\# \big(\Sigma_\infty\setminus \Sigma_D\big)= 1$ (resp. $\# \big(\Sigma_\infty\setminus \Sigma_D\big)= 0$) we define the smooth $K_v$-representation $\pi(\rhobar_v):=\Hom_{\F[G_F]}(\rbar, S_{\psi}(K^v, \F)[\m'_{\rbar}])$ (resp. $\pi(\rhobar_v):=S_{\psi}(K^v, \F)[\m'_{\rbar}]$) 
We set $K_v(1):=\ker(K_v\surj \Gamma)$.
From the main results in \cite{EGS} we have the following statement:
\begin{thm}[\cite{EGS}, Theorem 9.1.1 and 10.1.1]
\label{thmEGS}
Let $\rbar: G_F\rightarrow \GL_2(\F)$ be a continuous Galois representation satisfying the hypotheses (i)-(v) above.
Then 
\[
\mathrm{cosoc}_\Gamma((\pi(\rhobar_v)^{\vee})_{K_v(1)})=\underset{\sigma\in W^?(\rhobar_v(1)^{\vee})}{\oplus}\sigma.
\]
Let $\sigma(\tau)$ be a $K_v$-type and let $\sigma^{0}(\tau)$ a $W(\F)$-lattice with irreducible socle.
Then 
\[
\mathrm{Hom}_\Gamma((\pi(\rhobar_v)^\vee)_{K_v(1)},\overline{\sigma}^{0}(\tau))
\]
is at most one dimensional.
\end{thm}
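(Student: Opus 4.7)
The plan is to verify that both assertions reduce to results already proved in \cite{EGS} once the data chosen at places of $S\setminus\{v\}$ is matched with that of \cite[\S 6.5]{EGS}. By construction, our items (1)--(3) above the theorem are precisely those of \emph{loc.\ cit.}, so the $\m'_{\rbar}$-generalized eigenspace of $S_{\psi}(K^v U_v, V_v^\vee)$ agrees with the space $S^{\mathrm{min}}(V_v^\vee)_{\m_{\rbar}}$ of \cite{EGS} up to twist by the central character $\teich{\psi}$. Under Pontrjagin duality, after specializing $V_v^\vee = \F$ and varying $U_v$, the module $(\pi(\rhobar_v)^\vee)_{K_v(1)}$ is identified with $S^{\mathrm{min}}(\F)_{\m_{\rbar}}$ viewed as a $\Gamma$-module (in the definite case directly, in the indefinite case after first applying $\Hom_{\F[G_F]}(\rbar,-)$ and using the absolute irreducibility of $\rbar|_{G_{F(\zeta_p)}}$ to ensure that the Galois and automorphic factors split off cleanly).

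For the first assertion, the cosocle of $(\pi(\rhobar_v)^\vee)_{K_v(1)}$ is Pontrjagin-dual to the $\Gamma$-socle of $\pi(\rhobar_v)^{K_v(1)}$. Computing this socle, together with the assertion that each weight appears there with multiplicity one, is the content of \cite[Theorem 10.1.1]{EGS}, which establishes the weight part of Serre's conjecture at minimal level in the quaternionic setup; genericity of $\rbar|_{G_{F_w}}$ at all $w\in\Sigma_p$ is exactly our hypothesis (iv), so the cited theorem applies.

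For the second assertion, Frobenius reciprocity (at the level of $K_v$) gives
\begin{equation*}
\Hom_\Gamma\bigl((\pi(\rhobar_v)^\vee)_{K_v(1)},\overline{\sigma}^0(\tau)\bigr) \cong \Hom_{K_v}\bigl(\sigma^0(\tau)^\vee,\pi(\rhobar_v)\bigr),
\end{equation*}
and under the identification above, the right hand side is the space of minimal-level quaternionic modular forms with coefficients in the lattice $\sigma^0(\tau)^\vee$, which has irreducible cosocle as $\sigma^0(\tau)$ has irreducible socle. The bound by one is then precisely \cite[Theorem 9.1.1]{EGS}, proved by Taylor--Wiles patching combined with the local model calculations of \emph{loc.\ cit.} showing that the patched module is free of rank one over the corresponding component of the patched deformation ring.

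The only real obstacle is bookkeeping: one must check compatibly the central character conventions (twisting by $\teich{\psi}$ and by $\omega$ when comparing $\rhobar$ and $\rhobar^\vee(1)$), the Pontrjagin duality between invariants and coinvariants under $K_v(1)$, and the two cases $\#(\Sigma_\infty\setminus\Sigma_D)\in\{0,1\}$, so that the quoted EGS statements genuinely apply to the module $(\pi(\rhobar_v)^\vee)_{K_v(1)}$ as defined here. No new local or global ingredient is needed beyond \cite{EGS}.
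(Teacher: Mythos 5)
Your proposal is correct and follows essentially the same route as the paper's proof: identify $(\pi(\rhobar_v)^\vee)_{K_v(1)}$ with the minimal-level space of \cite[\S 6.5]{EGS} via $K_v(1)$-invariants and Pontrjagin duality, deduce the cosocle statement from the patched-module form of the weight part of Serre's conjecture, and obtain the Hom bound by passing to the dual lattice (whose reduction has irreducible cosocle) and quoting the rank-one freeness of $M_\infty(\sigma^0(\tau))$. Two small corrections: you have swapped the two EGS citations (Theorem 9.1.1 is the criterion $M_\infty(\sigma)\neq 0 \Leftrightarrow \sigma\in W^?(\rhobar_v)$, while Theorem 10.1.1 is the freeness statement for lattices with irreducible cosocle), and the identification $\big(S_\psi(K^v,\F)[\m'_{\rbar}]\big)^{K_v(1)}\cong S_\psi(K^vK_v(1),\F)[\m'_{\rbar}]$ that you file under bookkeeping genuinely uses that $K^vU_v$ is sufficiently small and $\m'_{\rbar}$ is non-Eisenstein (a Hochschild--Serre argument), exactly as in the paper.
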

\begin{proof}
We let $M_\infty: \Rep_{\F}^{\psi}(K_v)\rightarrow \mathrm{Mod}^{\mathrm{f.t.}}(R_\infty)$ be the fixed determinant and minimal level patching functor associated to $\rbar$ as in \cite[\S 6.5]{EGS}. 
By abuse of notation we let $\mathfrak{m}'_{\rbar}$ denote the maximal ideal of $R_\infty$.
By construction of the functor $M_{\infty}$, for any  representation $V_v\in \Rep_{\F}^{\psi}(K_v)$ we have an isomorphism
\[
\big(M_\infty(V_v)/\mathfrak{m}'_{\rbar}\big)^\vee\cong S_\psi(K^vK_v,V_v^\vee)[\mathfrak{m}'_{\rbar}]
\]
together with a compatible morphism of local rings $R^{\tld{\psi}}_\infty\rightarrow \bT'(V_v)_{\mathfrak{m}'_{\rbar}}$.

Since $K^vU_v$ is sufficiently small for any choice of a compact open subgroup $U_v\leq K_v$ and since $\mathfrak{m}'_{\rbar}$ is non-Eisenstein, a standard spectral sequence argument gives:
\[
\big(S_\psi(K^v,\F)[\mathfrak{m}'_{\rbar}]\big)^{K_v(1)}\cong S_\psi(K^vK_v(1),\F)[\mathfrak{m}'_{\rbar}].
\]

In particular if $K_v(1)$ acts trivially on $V_v\in \Rep_{\F}^{\psi}(K_v)$ we obtain
\begin{align}
\label{iso:patched}
\big(M_\infty(V_v)/\mathfrak{m}'_{\rbar}\big)^\vee&\cong S_\psi(K^vK_v,V_v^\vee)[\mathfrak{m}'_{\rbar}]
\\
\nonumber&\cong \Hom_{\Gamma}(V_v,S_\psi(K^vK_v(1),\F)[\mathfrak{m}'_{\rbar}])\\
\nonumber&\cong \Hom_{K_v}(V_v,\pi(\rhobar_v)^{K_v(1)}).
\end{align}

If $\sigma^0(\tau)$ is a lattice with irreducible  cosocle in a tame type $\sigma(\tau)$, we now deduce from \cite[Theorem 10.1.1]{EGS} that $\mathrm{Hom}_{K_v}(\overline{\sigma}^{0}(\tau),\pi(\rhobar_v))$ is at most one dimensional. 
With $\sigma^0(\tau)$ as in the statement of the theorem, $\overline{\sigma}^0(\tau)^\vee$ is the reduction of a lattice in the dual type $\overline{\sigma}(\tau)^\vee$ with irreducible cosocle 
and thus the second claim in the theorem follows by Pontrjagin duality.

By (\ref{iso:patched}), Nakayama's lemma, and Pontrjagin duality, $\sigma$ is a Jordan--H\"older factor of the $\Gamma$-cosocle of $(\pi(\rhobar_v)^{\vee})_{K_v(1)}$ if and only if $M_\infty(\sigma)\neq 0$.
By \cite[Theorem 9.1.1]{EGS}, $M_\infty(\sigma)\neq 0$ if and only if $\sigma\in W^?(\rhobar_v)$.
Finally, from the second part of the theorem, one sees that $\sigma$ appears in the $\Gamma$-cosocle of $(\pi(\rhobar_v)^{\vee})_{K_v(1)}$ with multiplicity one by taking any lattice in a tame type whose reduction has irreducible socle isomorphic to $\sigma$.
\end{proof}


From now on, we assume that:
\begin{enumerate}[(i)]\setcounter{enumi}{5}
 \item $\rhobar_v:= \rbar|_{G_{F_v}}$ is semisimple and $1$-generic in the sense of Definition \ref{defin:generic}.
\end{enumerate}


\begin{prop} \label{prop:breuil}
Let $\rbar: G_F\rightarrow \GL_2(\F)$ be a continuous Galois representation satisfying the hypotheses (i)-(vi) above. 
There is a $K_v$-surjection $\pi(\rhobar_v)^\vee \surj D_0^{\vee}(\rhobar_v)$.
\end{prop}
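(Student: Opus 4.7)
The plan is to dualize: by Pontrjagin duality, the desired $K_v$-surjection is equivalent to a $K_v$-equivariant embedding $D_0(\rhobar_v) \hookrightarrow \pi(\rhobar_v)^{K_v(1)}$ (which is where the image must land since $K_v(1)$ acts trivially on $D_0(\rhobar_v)$). I plan to produce this embedding by combining the patching isomorphism
\[
\Hom_{K_v}(V_v, \pi(\rhobar_v)^{K_v(1)}) \cong \big(M_\infty(V_v)/\m'_{\rbar}\big)^\vee
\]
from the proof of Theorem \ref{thmEGS} (valid whenever $K_v(1)$ acts trivially on $V_v$) with a dimension count and a lifting argument.

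My first step would be to compute $\dim_\F M_\infty(D_0(\rhobar_v))/\m'_{\rbar}$. Combining Proposition \ref{prop:multone} (which identifies $D_0(\rhobar_v)$ with $D_0^{\mathrm{BP}}(\rhobar_v)$) with Lemma \ref{lem:multone}, every Jordan--H\"older factor of $D_0(\rhobar_v)$ lying in $W^?(\rhobar_v^\vee(1))$ must appear in the socle, and hence $D_0(\rhobar_v)/\mathrm{soc}(D_0(\rhobar_v))$ has no Jordan--H\"older factor in $W^?(\rhobar_v^\vee(1))$. Exactness of $M_\infty$ together with the vanishing $M_\infty(\sigma)=0$ for $\sigma\notin W^?(\rhobar_v^\vee(1))$ (from \cite[Theorem 9.1.1]{EGS}) then force $M_\infty(D_0(\rhobar_v))\cong M_\infty(\mathrm{soc}(D_0(\rhobar_v)))$; reducing modulo $\m'_{\rbar}$ and using that $\dim_\F M_\infty(\sigma)/\m'_{\rbar}=1$ for $\sigma\in W^?(\rhobar_v^\vee(1))$ (Theorem \ref{thmEGS}) then yields $\dim_\F M_\infty(D_0(\rhobar_v))/\m'_{\rbar} = 2^f$.

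Next, dualizing Theorem \ref{thmEGS} shows $\mathrm{soc}(\pi(\rhobar_v)^{K_v(1)}) = \bigoplus_{\sigma\in W^?(\rhobar_v^\vee(1))} \sigma$ with multiplicity one, and therefore $\dim_\F \Hom_\Gamma(\mathrm{soc}(D_0(\rhobar_v)),\pi(\rhobar_v)^{K_v(1)})=2^f$. I would then observe that the restriction-to-socle map
\[
\Hom_{K_v}(D_0(\rhobar_v),\pi(\rhobar_v)^{K_v(1)}) \longrightarrow \Hom_\Gamma(\mathrm{soc}(D_0(\rhobar_v)),\pi(\rhobar_v)^{K_v(1)})
\]
is injective: any $K_v$-map vanishing on $\mathrm{soc}(D_0(\rhobar_v))$ would factor through $D_0(\rhobar_v)/\mathrm{soc}(D_0(\rhobar_v))$, yet any nonzero image in $\pi(\rhobar_v)^{K_v(1)}$ must have socle in $\bigoplus_\sigma \sigma$, contradicting the absence of $W^?$-factors in the source. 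The two $\Hom$ spaces thus have equal dimension $2^f$, so the restriction map is an isomorphism; in particular, the tautological inclusion on socles lifts to a $K_v$-map $\Phi: D_0(\rhobar_v) \rightarrow \pi(\rhobar_v)^{K_v(1)}$, and $\Phi$ is injective because any simple subrepresentation of $\ker\Phi$ would have to lie in $\mathrm{soc}(D_0(\rhobar_v))$, where $\Phi$ is already injective.

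The main obstacle will be justifying the exactness of $M_\infty$ on finite-length $\F$-representations, which is essential in reducing $M_\infty(D_0(\rhobar_v))$ to $M_\infty(\mathrm{soc}(D_0(\rhobar_v)))$. This is a standard property of EGS-style patching functors, but it should either be imported directly from \cite{EGS} or traced back to the specific construction of $M_\infty$ used in the proof of Theorem \ref{thmEGS}.
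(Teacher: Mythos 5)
Your argument is correct in substance, but it takes a genuinely different route from the paper. The paper's proof is a one-line citation: by Proposition \ref{prop:multone} one has $D_0^{\vee}(\rhobar_v)\cong (D_0^{\mathrm{BP}}(\rhobar_v))^{\vee}$, and the $K_v$-embedding $D_0^{\mathrm{BP}}(\rhobar_v)\inj \pi(\rhobar_v)$ is exactly \cite[Proposition 9.3]{Breuil}; Pontrjagin duality then gives the surjection. You instead re-derive the embedding from the patching machinery: applying the isomorphism (\ref{iso:patched}) with $V_v=D_0(\rhobar_v)$, exactness of $M_\infty$ (which is indeed part of the definition of a patching functor in \cite{EGS}, so it can be imported as you hope), the vanishing of $M_\infty$ on weights outside the predicted set, and the (dualized) cosocle and multiplicity-one statements of Theorem \ref{thmEGS}, you show that restriction to the socle gives an isomorphism $\Hom_{K_v}(D_0(\rhobar_v),\pi(\rhobar_v)^{K_v(1)})\isom \Hom_\Gamma(\mathrm{soc}\,D_0(\rhobar_v),\pi(\rhobar_v)^{K_v(1)})$, and you lift the identification of socles to an injection $D_0(\rhobar_v)\inj \pi(\rhobar_v)^{K_v(1)}$, which dualizes to the desired surjection. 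What your route buys is independence from \cite[Proposition 9.3]{Breuil}, whose hypotheses on $\pi$ are themselves verified from the same global inputs; what the paper's route buys is brevity, deferring all use of the patched module to Theorem \ref{thmEGS} and Theorem \ref{thm:main}.

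Two small repairs. First, the structural input you need --- that every Jordan--H\"older factor of $D_0(\rhobar_v)$ lying in the relevant weight set occurs only in the socle, with multiplicity one --- should be quoted from Lemma \ref{lem:swmultone} (equivalently from \cite[Theorem 13.8]{BP} via Proposition \ref{prop:multone}), not from Lemma \ref{lem:multone}: that lemma has the lattice hypothesis $\dim \Hom_K(D_0^\vee,\overline{\sigma}^0(\tau))\leq 1$, which is not verified in the paper for $D_0^\vee(\rhobar_v)$ itself (it is verified for $(\pi(\rhobar_v)^\vee)_{K_v(1)}$ by Theorem \ref{thmEGS}). Second, to apply (\ref{iso:patched}) with $V_v=D_0(\rhobar_v)$ you should remark that $D_0(\rhobar_v)$ lies in $\Rep^{\psi}_{\F}(K_v)$, i.e.\ its central character agrees with $\psi\circ\Art_{F_v}$ on the units; this is where the choice $\psi=\omega\det\rbar$ enters and is implicit in the paper's citation of Breuil. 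Finally, keep the bookkeeping between $W^?(\rhobar_v)$ and $W^?(\rhobar_v^\vee(1))$ straight when passing between $D_0^\vee(\rhobar_v)$, its dual, and the socle of $\pi(\rhobar_v)^{K_v(1)}$; the weights do match up, but your write-up conflates the two sets.
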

\begin{proof}
This is Pontrjagin dual to \cite[Proposition 9.3]{Breuil}, noting that $D_0^{\vee}(\rhobar_v)\cong (D_0^{\mathrm{BP}}(\rhobar_v))^\vee$.
\end{proof}

\begin{thm}\label{thm:main}
Let $\rbar: G_F\rightarrow \GL_2(\F)$ be a continuous Galois representation satisfying the hypotheses (i)-(vi) above. Then we have an isomorphism of $\Gamma$-modules
$(\pi(\rhobar_v)^\vee)_{K_v(1)} \cong D_0^\vee(\rhobar_v)$.
\end{thm}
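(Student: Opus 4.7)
The strategy is to produce $\Gamma$-equivariant surjections in both directions between $A := (\pi(\rhobar_v)^\vee)_{K_v(1)}$ and $B := D_0^\vee(\rhobar_v)$, and then invoke finite-dimensionality to conclude that both are isomorphisms. All the substantive inputs are already available: Theorem \ref{thmEGS} for the cosocle and multiplicity bound, Proposition \ref{prop:upperbound} for the internal characterization of $D_0^\vee(\rhobar_v)$, and Proposition \ref{prop:breuil} for the surjection coming from Breuil's construction.

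\textbf{Step 1: A surjection $B \twoheadrightarrow A$.} I would apply Proposition \ref{prop:upperbound} to $A$. Theorem \ref{thmEGS} tells us that the $\Gamma$-cosocle of $A$ is $\bigoplus_{\sigma \in W^?(\rhobar_v(1)^\vee)} \sigma$, each constituent appearing once, which translates into $\dim \Hom_\Gamma(A,\sigma) = 1$ for $\sigma \in W^?(\rhobar_v^\vee(1))$ and $0$ otherwise (using that $W^?(\rhobar_v(1)^\vee)$ and $W^?(\rhobar_v^\vee(1))$ coincide by inspection). The same theorem provides the second hypothesis: $\dim \Hom_\Gamma(A,\overline{\sigma}^0(\tau)) \leq 1$ for any tame type $\sigma(\tau)$ and any lattice $\sigma^0(\tau) \subseteq \sigma(\tau)$ with irreducible socle. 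Proposition \ref{prop:upperbound} then yields a $\Gamma$-surjection $D_0^\vee(\rhobar_v) \twoheadrightarrow A$.

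\textbf{Step 2: A surjection $A \twoheadrightarrow B$.} Proposition \ref{prop:breuil} supplies a $K_v$-equivariant surjection $\pi(\rhobar_v)^\vee \twoheadrightarrow D_0^\vee(\rhobar_v)$. Since $K_v(1)$ acts trivially on $D_0^\vee(\rhobar_v)$ (it is a $\Gamma$-representation), this map factors through $K_v(1)$-coinvariants, giving a $\Gamma$-surjection $A \twoheadrightarrow B$.

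\textbf{Step 3: Conclude.} The module $B = D_0^\vee(\rhobar_v)$ is finite-dimensional as a quotient of $R_\mu$, hence by Step 1 the module $A$ is also finite-dimensional. Composing the surjections from Steps 1 and 2 produces a surjective $\Gamma$-endomorphism of the finite-dimensional $\F$-vector space $B$ (and similarly of $A$); any such endomorphism is an isomorphism, so both surjections must be isomorphisms. The isomorphism $A \cong B$ is exactly the statement of the theorem.

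\textbf{Main obstacle.} The substantive content has been entirely packaged into the preceding results: the submodule analysis of Section \ref{sec:proj} produces Proposition \ref{prop:upperbound}, and the deep multiplicity-one input of \cite{EGS} produces Theorem \ref{thmEGS}. The only potentially delicate point in the proof itself is verifying that the two multiplicity statements of Theorem \ref{thmEGS}, formulated in terms of $(\pi(\rhobar_v)^\vee)_{K_v(1)}$ and its cosocle, match precisely the hypotheses demanded by Proposition \ref{prop:upperbound}; but this is essentially a bookkeeping check on the parametrization of modular Serre weights by $W^?(\rhobar_v^\vee(1))$.
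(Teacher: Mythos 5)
Your proposal is correct and follows essentially the same argument as the paper: use Proposition \ref{prop:breuil} (factored through $K_v(1)$-coinvariants) for one surjection, verify via Theorem \ref{thmEGS} that $(\pi(\rhobar_v)^\vee)_{K_v(1)}$ satisfies the hypotheses of Proposition \ref{prop:upperbound} for the reverse surjection, and conclude by noting a surjective endomorphism of a finite length $\Gamma$-module is an isomorphism. The paper's proof is exactly this, so there is nothing to add.
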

\begin{proof}
By Proposition \ref{prop:breuil}, there is a surjection $(\pi^\vee)_{K_v(1)} \surj D_0^\vee(\rhobar_v)$.
By Theorem \ref{thmEGS}, 
$(\pi^\vee)_{K_v(1)}$ satisfies the conditions for $D_0^\vee$ in Proposition \ref{prop:upperbound}.
We conclude that there is a surjection $D_0^\vee(\rhobar_v) \surj (\pi^\vee)_{K_v(1)}$.
The composition of these surjections is a surjective endomorphism of $D_0^\vee(\rhobar_v)$, a finite length $\Gamma$-module, and is thus an isomorphism.
\end{proof}

We conclude with the main result of this paper:
\begin{corr}\label{corr:main}
Let $\rbar: G_F\rightarrow \GL_2(\F)$ be a continuous Galois representation satisfying the hypotheses (i)-(vi) above.
Then
\[
S_\psi(K^vK_v(1),\F)[\mathfrak{m}'_{\rbar}] \cong  D_0^{\mathrm{BP}}(\rhobar_v).
\]
In particular, the $\Gamma$-representation $S_\psi(K^vK_v(1),\F)[\mathfrak{m}'_{\rbar}]$ only depends on $\rbar|_{I_v}$ and is multiplicity free.
\end{corr}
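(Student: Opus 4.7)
The plan is to assemble the corollary directly from the two main previously established ingredients, Theorem \ref{thm:main} and Proposition \ref{prop:multone}, once the space $S_\psi(K^vK_v(1),\F)[\mathfrak{m}'_{\rbar}]$ has been identified with $\pi(\rhobar_v)^{K_v(1)}$ as a $\Gamma$-representation.

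First I would apply Pontryagin duality to Theorem \ref{thm:main}. Since $K_v(1)$ is an open pro-$p$ subgroup of $K_v$ and $\pi(\rhobar_v)$ is a smooth admissible representation, $(\cdot)^{K_v(1)}$ on $\pi(\rhobar_v)$ is exchanged with $(\cdot)_{K_v(1)}$ on $\pi(\rhobar_v)^\vee$. Dualizing the isomorphism $(\pi(\rhobar_v)^\vee)_{K_v(1)} \cong D_0^\vee(\rhobar_v)$ produced by Theorem \ref{thm:main} yields an isomorphism of $\Gamma$-representations
\[\pi(\rhobar_v)^{K_v(1)} \cong D_0(\rhobar_v),\]
and Proposition \ref{prop:multone} identifies the right-hand side with $D_0^{\mathrm{BP}}(\rhobar_v)$.

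Next I would match $\pi(\rhobar_v)^{K_v(1)}$ with the automorphic form space appearing in the statement. In the definite case this is immediate from the definition together with the spectral sequence argument already used inside the proof of Theorem \ref{thmEGS}: the fact that $\mathfrak{m}'_{\rbar}$ is non-Eisenstein and that $K^vU_v$ is sufficiently small for every open compact $U_v \leq K_v$ gives
\[\bigl(S_\psi(K^v,\F)[\mathfrak{m}'_{\rbar}]\bigr)^{K_v(1)} \cong S_\psi(K^vK_v(1),\F)[\mathfrak{m}'_{\rbar}].\]
In the indefinite (Shimura curve) case one replaces this by the analogous Hochschild--Serre spectral sequence in \'etale cohomology, combined with the decomposition of the $\mathfrak{m}'_{\rbar}$-generalized eigenspace into $G_F\times K_v$-isotypic components of the form $\rbar\otimes \pi(\rhobar_v)$; taking $\Hom_{\F[G_F]}(\rbar,-)$ on both sides and then $K_v(1)$-invariants yields the same identification at the level of $\Gamma$-representations (up to the $\rbar$-multiplicity, which is absorbed in the convention fixed in the definition of $\pi(\rhobar_v)$).

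For the ``in particular'' clauses, multiplicity freeness is just a restatement of Proposition \ref{prop:multone}. The dependence only on $\rbar|_{I_v}$ is visible directly from Definition \ref{defin:bp1wt}: the set $W^?(\rhobar_v^\vee(1))$ depends only on the inertial $L$-parameter (Proposition \ref{prop:weights} expresses it in terms of the Deligne--Lusztig datum $(s,\mu)$, which is itself inertial), and the submodules $V_J \subset R_\mu$ indexed by $J\subset S_w$ are constructed intrinsically from $\mu$ and the Weyl element $w$; both pieces of data are read off from $\rhobar_v|_{I_v}$.

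I do not anticipate a serious obstacle: the corollary is essentially a bookkeeping statement combining Theorem \ref{thm:main}, Proposition \ref{prop:multone}, and the Pontryagin/spectral-sequence formalism recalled in the proof of Theorem \ref{thmEGS}. The most delicate point is the identification of $\pi(\rhobar_v)^{K_v(1)}$ with $S_\psi(K^vK_v(1),\F)[\mathfrak{m}'_{\rbar}]$ in the Shimura curve case, where one has to keep careful track of the $\rbar$-isotypic factor in cohomology and of the normalization of the central character via $\teich{\psi}$.
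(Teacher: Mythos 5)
Your proposal is correct and takes essentially the same route as the paper: the paper likewise recalls the isomorphism $\big(S_\psi(K^v,\F)[\mathfrak{m}'_{\rbar}]\big)^{K_v(1)}\cong S_\psi(K^vK_v(1),\F)[\mathfrak{m}'_{\rbar}]$ from the proof of Theorem \ref{thmEGS}, then combines Theorem \ref{thm:main} and Proposition \ref{prop:multone} via Pontrjagin duality, with the ``in particular'' clauses read off from the definition of $D_0(\rhobar_v)$ and Proposition \ref{prop:multone}. Your extra care about the $\rbar$-isotypic factor in the indefinite case is just an elaboration of what the paper delegates to that same spectral-sequence step, not a different argument.
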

\begin{proof}
%

Recall from the proof of Theorem \ref{thmEGS} the isomorphism:
\[
\big(S_\psi(K^v,\F)[\mathfrak{m}'_{\rbar}]\big)^{K_v(1)}\cong S_\psi(K^vK_v(1),\F)[\mathfrak{m}'_{\rbar}].
\]
The isomorphism follows now from Proposition \ref{prop:multone} and Theorem \ref{thm:main} after applying Pontrjagin duality.
For the second statement, recall that $D_0(\rhobar_v)$ was defined only in terms of $W^?(\rhobar_v^\vee(1))$ and is multiplicity free by Proposition \ref{prop:multone}.
\end{proof}

\bibliographystyle{amsalpha}
\bibliography{fullcongruence}

\end{document}